\theoremstyle{thmstyleone}%
\newtheorem{theorem}{Theorem}%
\newtheorem{lemma}{Lemma}%
\theoremstyle{thmstyletwo}%
\newtheorem{example}{Example}%
\newtheorem{remark}{Remark}%
\theoremstyle{thmstylethree}%
\def\rhs{g}
\newcommand\ba{{\boldsymbol a}}
\newcommand\bb{{\boldsymbol b}}
\newcommand\RR{\mathbb{R}}
\newcommand\TT{{\mathscr T}}
\newcommand\II{{\mathscr I}}
\newcommand\NN{{\mathscr N}}
\begin{document}

\title[An algebraically stabilized method for convection--diffusion--reaction
problems with optimal experimental convergence rates on general meshes]
{An algebraically stabilized method for convection--diffusion--reaction
problems with optimal experimental convergence rates on general meshes}

\author{\fnm{Petr} \sur{Knobloch}}\email{knobloch@karlin.mff.cuni.cz, ORCID 0000-0003-2709-5882}

\affil{\orgdiv{Department of Numerical Mathematics, Faculty of Mathematics
and Physics}, \orgname{Charles University},
\orgaddress{\street{Sokolovsk\'a 83}, \city{Praha 8}, \postcode{18675},
\country{Czech Republic}}}

\abstract{Algebraically stabilized finite element discretizations of scalar
steady-state convection--diffusion--reaction equations often provide accurate 
approximate solutions satisfying the discrete maximum principle (DMP). However,
it was observed that a deterioration of the accuracy and convergence rates may 
occur for some problems if meshes without local symmetries are used. The paper
investigates these phenomena both numerically and analytically and the findings
are used to design a new algebraic stabilization called Symmetrized 
Monotone Upwind-type Algebraically Stabilized (SMUAS) method. It is proved that
the SMUAS method is linearity preserving and satisfies the DMP on arbitrary 
simplicial meshes. Numerical results indicate that the SMUAS method leads to
optimal convergence rates on general meshes.}

\pacs[MSC Classification]{65N12, 65N30}

\maketitle

This work has been
supported through the grant No.~22-01591S of the Czech Science Foundation.

\pagestyle{myheadings}
\thispagestyle{plain}
\markboth{P.~Knobloch}
{An algebraically stabilized method with optimal convergence rates}

\newpage

\section{Introduction}
\label{s1}

Convection, diffusion and reaction are basic physical mechanisms which play an
important role in many mathematical models used in science and technology. A
frequently used model problem for studying numerical techniques for the
mentioned class of models is the scalar steady-state 
convection--diffusion--reaction problem
\begin{equation}
   -\varepsilon\,\Delta u+{\bb}\cdot\nabla u+c\,u=\rhs\quad\mbox{in $\Omega$}\,,
    \qquad\qquad u=u_b\quad\mbox{on $\partial\Omega$}\,,\label{strong-steady}
\end{equation}
where $\Omega\subset{\mathbb R}^d$, $d\ge1$, is a bounded domain,
$\varepsilon>0$ is a constant diffusion coefficient, $\bb$ is the convection
field, $c$ is the reaction field, and the right-hand side $\rhs$ is a source of
the unknown quantity $u$. Note that the model problem \eqref{strong-steady} 
itself has also a clear physical meaning since it may describe, e.g., the 
distribution of temperature or concentration. For our mathematical 
considerations, we will assume that the boundary $\partial\Omega$ of $\Omega$ 
is polyhedral and Lipschitz-continuous (if $d\ge2$) and that 
${\bb}\in W^{1,\infty}(\Omega)^d$, $c\in L^\infty(\Omega)$, 
$\rhs\in L^2(\Omega)$, and 
$u_b\in H^{\frac{1}{2}}(\partial\Omega)\cap C(\partial\Omega)$. Moreover, it 
will be assumed that the data satisfy the conditions
\begin{equation}
   \nabla\cdot\bb=0\,,\qquad c\ge\sigma_0\ge0\qquad\quad\mbox{in $\Omega$}\,,
   \label{eq_ass_b_c}
\end{equation}
where $\sigma_0$ is a constant. 

In most applications, the convective transport strongly dominates the diffusion
which causes that the solution $u$ comprises so-called layers, which are narrow
regions where $u$ changes abruptly. The presence of layers makes the numerical
solution of \eqref{strong-steady} very challenging since standard approaches
provide solutions polluted by spurious oscillations unless the layers are
resolved by the mesh. A well-known remedy is a stabilization of the standard
discretization, e.g., by adding additional stabilization terms, see, e.g.,
\cite{RST08}. To obtain accurate approximations, the stabilization has to be
adopted to the character of the approximated solution which inevitably leads to
nonlinear methods. However, many of such stabilization techniques still do not
remove the spurious oscillations completely since the stabilization effect is
influenced by many factors, like the used mesh or the considered data,
cf.~\cite{JK07,JK07b,JK08}. Although the remaining spurious oscillations are 
often quite small, they may be not acceptable in some applications, e.g., if 
the oscillating solution should serve as input data for other equations. A
possible remedy is to apply methods satisfying the discrete maximum 
principle (DMP), see, e.g., the recent review paper \cite{BJK22}. The DMP 
excludes many types of oscillating solutions that otherwise frequently appear
when solving convection-dominated problems. A further reason for 
requiring the validity of the DMP is that a maximum principle holds for the 
continuous problem \eqref{strong-steady} if $c\ge0$ (cf.~\cite{Evans,GT01}) and 
it is important that this physical property is preserved by the discrete 
problem. 

An interesting class of methods satisfying the DMP (often under some
assumptions on the mesh) are algebraically stabilized finite element schemes, 
e.g., algebraic flux correction (AFC) schemes. These methods have been 
developed intensively in recent years, see, e.g.,
\cite{BB17,BJK17,GNPY14,Kuzmin06,Kuzmin07,Kuzmin09,Kuzmin12,KuzminMoeller05,KS17,KT04,LKSM17}. The origins of this approach can be tracked back to 
\cite{BorisBook73,Zalesak79}. In these schemes, the stabilization is performed 
on the basis of the algebraic system of equations corresponding to the Galerkin 
finite element method. It involves so-called limiters, which restrict the 
stabilized discretization mainly to a vicinity of layers to ensure the 
satisfaction of the DMP without compromising the accuracy. There are several 
limiters proposed in the literature, like the so-called Kuzmin~\cite{Kuzmin07}, 
BJK~\cite{BJK17}, or BBK~\cite{BBK17b} limiters. Both, the Kuzmin and the BBK 
limiters were utilized in \cite{BBK17a} for defining a scheme that blends a 
standard linear stabilized scheme in smooth regions and a nonlinear stabilized 
method in a vicinity of layers. 

An important feature of algebraically stabilized schemes is that they not only 
satisfy the DMP but also usually provide sharp approximations of layers, 
cf.~the numerical results in, e.g., \cite{ACF+11,GKT12,JS08,Kuzmin12}. In this
paper, we concentrate on schemes based on the idea of algebraic flux 
correction. Many properties of the AFC schemes are already well understood 
since these schemes were investigated in a number of papers, see, e.g., 
\cite{BJK15,Kno15b,BJK16,BJK17,Kno17,BJKR18,Kno19}, where one can find results 
on the existence of solutions, local and global DMPs, error estimates, and 
further properties. However, it was observed already in \cite{BJK16} that
convergence rates of these schemes may be suboptimal on some meshes, even if
problems without layers are considered. The aim of the present paper is to
explain this behaviour in some model cases and, using the results of this
analysis, to propose modifications of the considered methods leading to optimal 
convergence rates. This will lead to a new algebraic stabilization called 
Symmetrized Monotone Upwind-type Algebraically Stabilized (SMUAS) method for
which the solvability, linearity preservation and DMP will be proved on 
arbitrary simplicial meshes. Moreover, various numerical results will be 
reported that show that, in many cases, the SMUAS method leads to more accurate 
results than other algebraic stabilizations. In addition, the numerical results 
indicate that the SMUAS method converges with optimal rates on general meshes. 
Let us mention that the analysis of AFC schemes also demonstrates the 
interesting fact that certain types of spurious oscillations may be still 
present in the approximate solutions despite the validity of the DMP. This 
contradicts the frequently made claim that the DMP guarantees that no spurious 
oscillations appear.

The plan of the paper is as follows. In the next section, we define a Galerkin 
finite element discretization of \eqref{strong-steady} and the corresponding 
linear algebraic problem. Then, in Section~\ref{s3}, we introduce a general 
algebraic stabilization and summarize its main properties. Section~\ref{s4} 
provides three examples of algebraic stabilizations. The first one is the AFC 
scheme with the Kuzmin limiter, the deficiencies of which are then analyzed in
Section~\ref{s5}. The other two examples in Section~\ref{s4} are the AFC scheme
with the BJK limiter, for which also some results are reported in
Section~\ref{s5}, and the MUAS method. The MUAS method is used as the basis for
defining the new algebraic stabilization in Section~\ref{s6}. After analyzing
the new method, various numerical results will be presented.

\section{Galerkin finite element discretization}
\label{s2}

A finite element discretization of the convection--diffusion--reaction problem 
\eqref{strong-steady} is based on its weak formulation, which reads: 

\vspace*{2mm}

Find $u\in H^1(\Omega)$ such that $u=u_b$ on $\partial\Omega$ and
\begin{equation*}
   a(u,v)=(\rhs,v)\qquad\forall\,\,v\in H^1_0(\Omega)\,,
\end{equation*}
where
\begin{equation*}
   a(u,v)=\varepsilon\,(\nabla u,\nabla v)+({\bb}\cdot\nabla u,v)+(c\,u,v)\,.
\end{equation*}

\vspace*{1mm}

\noindent
As usual, $(\cdot,\cdot)$ denotes the inner product in $L^2(\Omega)$ or 
$L^2(\Omega)^d$. It is well known that this weak formulation has a unique
solution (cf. \cite{Evans}).

To define a finite element discretization of problem \eqref{strong-steady}, we
consider a simplicial triangulation $\TT_h$ of $\overline\Omega$ which is 
assumed to belong to a regular family of triangulations in the sense of
\cite{Ciarlet}. Furthermore, we introduce finite element spaces
\begin{displaymath}
   W_h=\{v_h\in C(\overline{\Omega})\,;\,\,v_h\vert _T^{}\in P_1(T)\,\,
   \forall\, T\in \TT_h\}\,,\qquad V_h=W_h\cap H^1_0(\Omega)\,,
\end{displaymath}
consisting of continuous piecewise linear functions. The vertices of the
triangulation $\TT_h$ will be denoted by $x_1,\dots,x_N$ and we assume that
$x_1,\dots,x_M\in\Omega$ and $x_{M+1},\dots,x_N\in\partial\Omega$. Then the
usual basis functions $\varphi_1,\dots,\varphi_N$ of $W_h$ are defined by the
conditions $\varphi_i(x_j)=\delta_{ij}$, $i,j=1,\dots,N$, where $\delta_{ij}$
is the Kronecker symbol. Obviously, the functions $\varphi_1,\dots,\varphi_M$ 
form a basis of $V_h$. Any function $u_h\in W_h$ can be written in a unique way
in the form
\begin{equation}
   u_h=\sum_{i=1}^Nu_i\,\varphi_i\label{eq_vect_fcn_identification}
\end{equation}
and hence it can be identified with the coefficient vector 
${\rm U}=(u_1,\dots,u_N)$.

Now an approximate solution of problem \eqref{strong-steady} can be
introduced as the solution of the following finite-dimensional problem:

\vspace*{2mm}

Find $u_h\in W_h$ such that $u_h(x_i)=u_b(x_i)$, $i=M+1,\dots,N$, and
\begin{equation}
   a(u_h,v_h)=(\rhs,v_h)\qquad\forall\,\,v_h\in V_h\,,\label{v3}
\end{equation}

\vspace*{1mm}

\noindent
It is easy to show that the discrete problem \eqref{v3} has a unique solution. 

We denote
\begin{alignat}{2}
   a_{ij}&=a(\varphi_j,\varphi_i)\,,\qquad &&i,j=1,\dots,N\,,\label{13}\\
   \rhs_i&=(\rhs,\varphi_i)\,,\qquad &&i=1,\dots,M\,,\label{14}\\
   u^b_i&=u_b(x_i)\,,\qquad &&i=M+1,\dots,N\,.\label{15}
\end{alignat}
Then $u_h$ is a solution of the finite-dimensional problem \eqref{v3} if and 
only if the coefficient vector $(u_1,\dots,u_N)$ corresponding to $u_h$ 
satisfies the algebraic problem
\begin{align}
   &\sum_{j=1}^N\,a_{ij}\,u_j=\rhs_i\,,\qquad i=1,\dots,M\,,\label{21}\\
   &u_i=u^b_i\,,\qquad i=M+1,\dots,N\,.\label{21b}
\end{align}

As discussed in the introduction, the above discretization is not appropriate 
in the convection-dominated regime and a stabilization has to be applied. The
most common way is to introduce additional stabilization terms in the discrete 
problem \eqref{v3}, see, e.g., \cite{RST08}. However, another attractive 
possibility is to modify the algebraic problem \eqref{21}, \eqref{21b}, which
will be pursued in this paper.

\section{A general algebraic stabilization}
\label{s3}

The stabilizing effect of various approaches used to suppress the spurious
oscillations present in the solutions of the Galerkin discretization is due to
the fact that these methods add a certain amount of artificial diffusion to the 
Galerkin FEM. However, if this amount is too large, the approximate solution
becomes inaccurate due to an excessive smearing of the layers. It turns out
that accurate solutions can be obtained only if the amount of the artificial 
diffusion respects the local behaviour of the solution, see, e.g.,
\cite{BJK22}. This motivates us to stabilize the algebraic problem
\eqref{21}, \eqref{21b} by adding an artificial diffusion matrix
${\mathbb B}({\rm U})=(b_{ij}({\rm U}))_{i,j=1}^N$ which depends on the unknown 
approximate solution ${\rm U}=(u_1,\dots,u_N)$. Here we shall describe this
approach only briefly and refer to the recent paper \cite{JK21} for a more
detailed presentation.

Based on the above discussion, we will consider the nonlinear algebraic problem
\begin{align}
   &\sum_{j=1}^N\,(a_{ij}+b_{ij}({\rm U}))\,u_j=\rhs_i\,,\qquad i=1,\dots,M\,,
   \label{23}\\
   &u_i=u^b_i\,,\qquad i=M+1,\dots,N\,.\label{23b}
\end{align}
We assume that, for any ${\rm U}\in{\mathbb R}^N$, the matrix ${\mathbb B}({\rm
U})$ satisfies
\begin{alignat}{2}
   &b_{ij}({\rm U})=b_{ji}({\rm U})\,,\qquad\quad&&i,j=1,\dots,N\,,
   \label{eq_b1}\\[2mm]
   &b_{ij}({\rm U})\le0,\qquad\quad&&i,j=1,\dots,N\,,\,\,i\neq j\,,
   \label{eq_b2}\\
   &\sum_{j=1}^N\,b_{ij}({\rm U})=0\,,\qquad\quad&&i=1,\dots,N\,.\label{eq_b3}
\end{alignat}
Moreover, we assume that ${\mathbb B}({\rm U})$ has the typical sparsity 
pattern of finite element matrices, i.e.,
\begin{equation}
   b_{ij}({\rm U})=0\qquad
   \forall\,\,j\not\in S_i\cup\{i\},\,i=1,\dots,M\,,\label{eq_b4}
\end{equation}
where
\begin{equation*}
   S_i=\{j\in\{1,\dots,N\}\setminus\{i\}\,;\,\,\mbox{\rm $x_i$ and $x_j$ are
         end points of the same edge}\}\,.
\end{equation*}
These assumptions are motivated by the fact that the properties
\eqref{eq_b1}--\eqref{eq_b4} are satisfied for the diffusion matrix 
$(\varepsilon\,(\nabla\varphi_j,\nabla\varphi_i))_{i,j=1}^N$ 
if the triangulation $\TT_h$ is weakly acute, i.e., 
if the angles between facets of $\TT_h$ do not exceed $\pi/2$. It is also
important that the properties \eqref{eq_b1}--\eqref{eq_b3} assure that the 
matrix ${\mathbb B}({\rm U})$ is positive semidefinite for any 
${\rm U}\in{\mathbb R}^N$, see \cite{JK21}.

To prove the solvability of the system \eqref{23}, \eqref{23b}, we make the
following assumption, which is motivated by the definitions of the matrix 
${\mathbb B}({\rm U})$ considered in this paper.

\vspace*{1ex}
\noindent{\bf Assumption (A1): } For any $i\in\{1,\dots,M\}$ and any
$j\in\{1,\dots,N\}$, the function $b_{ij}({\rm U})(u_j-u_i)$ is a continuous 
function of ${\rm U}=(u_1,\dots,u_N)\in{\mathbb R}^N$ and, for any
$i\in\{1,\dots,M\}$ and any $j\in\{M+1,\dots,N\}$, the function 
$b_{ij}({\rm U})$ is a bounded function of ${\rm U}\in{\mathbb R}^N$.
\vspace*{1ex}

\begin{theorem}\label{existence}
Let \eqref{eq_b1}--\eqref{eq_b3} hold and let Assumption (A1)
be satisfied. Then there exists a solution of the nonlinear problem
\eqref{23}, \eqref{23b}.
\end{theorem}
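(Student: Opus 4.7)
My plan is to reduce the system \eqref{23}, \eqref{23b} to a zero-finding problem for a continuous map $F\colon\RR^M\to\RR^M$ acting on the vector $\widetilde{\rm U}=(u_1,\dots,u_M)$ of interior unknowns and to apply the standard Brouwer-type consequence: a continuous map $F$ with $F(\widetilde{\rm U})\cdot\widetilde{\rm U}>0$ on some sphere $\|\widetilde{\rm U}\|=R$ must have a zero in the enclosed ball. The bulk of the work then lies in establishing a coercivity estimate of this form, which will follow from the standard coercivity of the Galerkin bilinear form $a$ together with the positive semidefiniteness of $\mathbb{B}({\rm U})$ built into \eqref{eq_b1}--\eqref{eq_b3}.

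First I would exploit the row-sum condition \eqref{eq_b3} to rewrite the $i$-th equation in \eqref{23} as
\begin{equation*}
\sum_{j=1}^N a_{ij}\,u_j+\sum_{j=1}^N b_{ij}({\rm U})\,(u_j-u_i)=\rhs_i,\qquad i=1,\dots,M,
\end{equation*}
using $u_i\sum_j b_{ij}({\rm U})=0$. Substituting the prescribed values $u_j=u^b_j$ for $j=M+1,\dots,N$, the left-hand side minus $\rhs_i$ defines the components of $F(\widetilde{\rm U})$. Continuity of $F$ is then immediate from Assumption~(A1): each summand $b_{ij}({\rm U})(u_j-u_i)$ is continuous on $\RR^N$, and restriction to the affine subspace where the boundary components are fixed preserves continuity.

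The core estimate is a bound of the form
\begin{equation*}
F(\widetilde{\rm U})\cdot\widetilde{\rm U}\ \ge\ \alpha\,\|\widetilde{\rm U}\|^2-\beta\,\|\widetilde{\rm U}\|-\gamma,
\end{equation*}
with constants $\alpha>0$ and $\beta,\gamma\ge0$ that may depend on the mesh and the data but not on $\widetilde{\rm U}$. The Galerkin contribution equals $a(u_h,w_h)$ with $w_h=\sum_{i=1}^M u_i\,\varphi_i\in V_h$ and $u_h=w_h+z_h$, $z_h=\sum_{i=M+1}^N u^b_i\,\varphi_i$. Because $\nabla\cdot\bb=0$ and $w_h\in H^1_0(\Omega)$, the convection term in $a(w_h,w_h)$ vanishes after integration by parts, leaving $a(w_h,w_h)\ge\varepsilon\,\|\nabla w_h\|^2$; Poincar\'e's inequality together with norm equivalence on the finite-dimensional space $V_h$ then yields $a(w_h,w_h)\ge\alpha\,\|\widetilde{\rm U}\|^2$, while the cross term $a(z_h,w_h)$ is controlled linearly by $\|\widetilde{\rm U}\|$. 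For the stabilization contribution I would use $\sum_{j=1}^N b_{ij}({\rm U})(u_j-u_i)u_i=\sum_{j=1}^N b_{ij}({\rm U})u_j u_i$ and split the double sum according to $j\le M$ and $j>M$. The first block is the quadratic form of the $M\times M$ principal submatrix of $\mathbb{B}({\rm U})$ evaluated on $\widetilde{\rm U}$; since $\mathbb{B}({\rm U})$ is positive semidefinite under \eqref{eq_b1}--\eqref{eq_b3} (as recalled in the text), so is this submatrix, and the block is nonnegative. The second block couples $\widetilde{\rm U}$ with the fixed values $u^b_j$ multiplied by $b_{ij}({\rm U})$ for $j>M$; here the \emph{boundedness} part of Assumption~(A1) is invoked to bound it by a constant times $\|\widetilde{\rm U}\|$.

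Combining these estimates with the linear term $-\rhs\cdot\widetilde{\rm U}$ produces the desired coercivity, so $F(\widetilde{\rm U})\cdot\widetilde{\rm U}>0$ on $\|\widetilde{\rm U}\|=R$ for $R$ sufficiently large, and the Brouwer-type lemma delivers a zero. The delicate point is that (A1) supplies continuity only of the product $b_{ij}({\rm U})(u_j-u_i)$, not of $b_{ij}({\rm U})$ itself; I expect the reformulation via \eqref{eq_b3} to be what makes the argument go through, since it expresses the stabilization in terms of precisely these continuous products, while the boundedness half of (A1) is reserved for the boundary interactions in the coercivity bound.
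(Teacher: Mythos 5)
Your proposal is correct, and it follows essentially the same route as the proof the paper delegates to \cite{JK21}: rewrite the stabilization term via the zero row sums \eqref{eq_b3} so that only the continuous products $b_{ij}({\rm U})(u_j-u_i)$ and the bounded boundary couplings from Assumption~(A1) appear, establish coercivity of the resulting map from the coercivity of $a$ on $V_h$ and the positive semidefiniteness of ${\mathbb B}({\rm U})$, and conclude with the standard Brouwer-type zero-finding lemma. No gaps; your closing remark about why (A1) is split into a continuity part and a boundedness part is exactly the point of the argument in the reference.
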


\begin{proof} See \cite{JK21}. \end{proof}

The construction of the matrix ${\mathbb B}({\rm U})$ is usually based on the
requirement that the problem \eqref{23}, \eqref{23b} satisfies the DMP.
One can formulate various conditions that guarantee that a nonlinear
discrete problem satisfies the DMP or at least preserves the positivity,
cf.~\cite{BJK22}. For our purposes, the following assumption is useful.

\vspace*{1ex}
\noindent{\bf Assumption (A2): } Consider any ${\rm U}=(u_1,\dots,u_N)\in\RR^N$ 
and any $i\in\{1,\dots,M\}$. If $u_i$ is a strict local extremum of $\rm U$ 
with respect to $S_i$, i.e.,
\begin{displaymath}
   u_i>u_j\quad\forall\,\,j\in S_i\qquad\mbox{or}\qquad
   u_i<u_j\quad\forall\,\,j\in S_i\,,
\end{displaymath}
then
\begin{displaymath}
   a_{ij}+b_{ij}({\rm U})\le0\qquad\forall\,\,j\in S_i\,.
\end{displaymath}
\vspace*{1ex}

Under the above assumptions, it is possible to prove that the approximate
solution obtained using the nonlinear problem \eqref{23}, \eqref{23b} satisfies
a direct analogue of the maximum principles which hold for the 
problem \eqref{strong-steady} (see, e.g., \cite{Evans} for the classical 
solutions and \cite{GT01} for the weak solutions).

\begin{theorem}\label{thm:general_DMP2}
Let the assumptions stated in Section~\ref{s1} be satisfied and let the matrix 
${\mathbb B}({\rm U})$ satisfies \eqref{eq_b1}--\eqref{eq_b4} and Assumptions 
(A1) and (A2). Consider any nonempty set ${\mathscr G}_h\subset \TT_h$ and 
define
\begin{displaymath}
   G_h=\bigcup_{T\in {\mathscr G}_h}\,T\,.
\end{displaymath}
Let ${\rm U}\in{\mathbb R}^N$ be a solution of \eqref{23} and let $u_h\in W_h$
be the corresponding finite element function given by
\eqref{eq_vect_fcn_identification}. Then one has the DMP
\begin{align*}
   &\rhs\le0\quad\mbox{\rm in}\,\,\,G_h\qquad\Rightarrow\qquad
    \max_{G_h}\,u_h\le\max_{\partial G_h}\,u_h^+\,,\\
   &\rhs\ge0\quad\mbox{\rm in}\,\,\,G_h\qquad\Rightarrow\qquad
    \min_{G_h}\,u_h\ge\min_{\partial G_h}\,u_h^-\,,
\end{align*}
where $u_h^+=\max\{u_h,0\}$ and $u_h^-=\min\{u_h,0\}$. If, in addition, $c=0$ 
in $G_h$, then
\begin{align*}
   &\rhs\le0\quad\mbox{\rm in}\,\,\,G_h\qquad\Rightarrow\qquad
    \max_{G_h}\,u_h=\max_{\partial G_h}\,u_h\,,\\
   &\rhs\ge0\quad\mbox{\rm in}\,\,\,G_h\qquad\Rightarrow\qquad
    \min_{G_h}\,u_h=\min_{\partial G_h}\,u_h\,.
\end{align*}
\end{theorem}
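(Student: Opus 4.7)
The plan is to argue by contradiction, focusing on the first implication of the first DMP statement; the minimum-principle statements follow by entirely analogous reasoning (flipping signs of $g$ and $u_h$ and swapping local maxima for local minima), and the sharper equalities in the case $c \equiv 0$ on $G_h$ are recovered by reading off the same sign analysis with the reaction term dropped.

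First I would assume $g \leq 0$ on $G_h$ and $M^\star := \max_{G_h} u_h > \max_{\partial G_h} u_h^+$, which gives $M^\star > 0$ and $M^\star > \max_{\partial G_h} u_h$. Since $u_h$ is piecewise linear on simplices, $M^\star$ is attained at a vertex $x_i \in G_h$; the strict inequality prevents $x_i$ from lying on $\partial G_h$, and an interior vertex of $G_h$ must lie in $\Omega$, so $i \in \{1,\ldots,M\}$. Moreover every neighbor $x_j$ with $j \in S_i$ then lies in $\overline{G_h}$ with $u_j \leq M^\star$, since $x_i$ is surrounded by triangles of ${\mathscr G}_h$. The crucial refinement is to choose $i$ so that $u_i$ is a \emph{strict} local maximum with respect to $S_i$: setting $J := \{k \,:\, x_k \in G_h,\; u_k = M^\star\}$, if every vertex of $J$ interior to $G_h$ had all neighbors also in $J$, then by the edge-connectedness of any connected component of ${\mathscr G}_h$ the set $J$ would have to contain vertices on $\partial G_h$, contradicting $M^\star > \max_{\partial G_h} u_h$. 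For such $i$, Assumption~(A2) yields $a_{ij} + b_{ij}({\rm U}) \leq 0$ for every $j \in S_i$, and there exists $j^\star \in S_i$ with $u_{j^\star} < u_i$.

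Next I would test the $i$-th equation of \eqref{23}. Using \eqref{eq_b3}, the identity $\sum_{j=1}^N a_{ij} = (c, \varphi_i)$ (which follows immediately from $\sum_{j=1}^N \varphi_j \equiv 1$, since the constant test function annihilates the diffusion and convection contributions), and the locality $a_{ij} = 0 = b_{ij}({\rm U})$ for $j \notin S_i \cup \{i\}$, the equation rearranges to
\begin{equation*}
   \sum_{j \in S_i} \bigl(a_{ij} + b_{ij}({\rm U})\bigr)\,(u_j - u_i)
   + u_i\,(c, \varphi_i) \;=\; g_i\,.
\end{equation*}
Each summand in the left-hand sum is $\geq 0$ by (A2) together with $u_j - u_i \leq 0$; the reaction term $u_i(c, \varphi_i)$ is $\geq 0$ since $u_i > 0$ and $c \geq 0$; and $g_i = (g, \varphi_i) \leq 0$ since $\mathrm{supp}\,\varphi_i \subset G_h$ and $g \leq 0$ there. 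Both sides are therefore forced to vanish, producing in particular $(a_{ij^\star} + b_{ij^\star}({\rm U}))(u_{j^\star} - u_i) = 0$, $u_i(c, \varphi_i) = 0$, and $g_i = 0$. In the case $c \equiv 0$ on $G_h$ the middle term is absent throughout and the same analysis delivers the sharper equality $\max_{G_h} u_h = \max_{\partial G_h} u_h$ stated at the end of the theorem.

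The main obstacle is the degenerate possibility $(c, \varphi_i) = 0$, in which the sign clash above does not close immediately into a contradiction. I would handle this by iteration: the equality $g_i = 0$ combined with $g \leq 0$ on $\mathrm{supp}\,\varphi_i$ forces $g = 0$ there, the equality $u_i(c,\varphi_i) = 0$ with $u_i > 0$ forces $c = 0$ there, and the vanishing of each product $(a_{ij} + b_{ij}({\rm U}))(u_j - u_i)$ forces $a_{ij} + b_{ij}({\rm U}) = 0$ on every edge $(i,j)$ with $j \in S_i$ and $u_j < u_i$. These local equalities reduce the situation around $x_i$ to a reaction-free and source-free one, and the strict-local-max argument can be re-applied at suitably chosen neighbouring vertices of $J$, eventually propagating $J$ along edges until it reaches $\partial G_h$, producing the sought contradiction. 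The minimum principles then follow from the symmetric analysis, and Theorem~\ref{existence} is not invoked at any point—only the equation \eqref{23} together with the structural assumptions \eqref{eq_b1}--\eqref{eq_b4} and (A2) are used.
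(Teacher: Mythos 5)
The paper itself only cites \cite{JK21} for this proof, so I am judging your argument on its own merits, and it has two genuine gaps, both at the heart of the matter. The first is the reduction to a strict local maximum. You argue that if no maximizer in $J$ is a strict local maximum, then ``every vertex of $J$ interior to $G_h$ had all neighbors also in $J$,'' whence $J$ propagates to $\partial G_h$. This is a false dichotomy: the negation of ``some $i\in J$ satisfies $u_i>u_j$ for all $j\in S_i$'' is only that every interior maximizer has \emph{at least one} neighbor in $J$, not all of them. Two adjacent interior vertices both attaining $M^\star$, with all their other neighbors strictly below, defeat your argument — neither is a strict local maximum, yet $J$ need not reach $\partial G_h$. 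Since Assumption~(A2) gives information only at \emph{strict} extrema, the non-strict case must be handled by another mechanism; the standard one is to use Assumption~(A1): perturb $u_i$ to $u_i+\epsilon$ so that the perturbed vector has a strict local maximum at $i$, apply (A2) there, and pass to the limit $\epsilon\to0$ using the continuity of $b_{ij}({\rm U})(u_j-u_i)$ to obtain $(a_{ij}+b_{ij}({\rm U}))(u_j-u_i)\ge0$ even at non-strict maxima. You explicitly state that only (A2) and the structural properties are used, so this cannot be repaired within your declared toolkit — the presence of (A1) among the hypotheses of the theorem is not incidental.

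The second gap is that even at a genuine strict local maximum your argument does not close in the degenerate case $(c,\varphi_i)=0$ (which includes the important case $c\equiv0$). You correctly derive that every term vanishes, but the proposed ``iteration'' cannot start: the vanishing of $(a_{ij}+b_{ij})(u_j-u_i)$ with $u_j<u_i$ yields $a_{ij}+b_{ij}=0$, not $u_j=u_i$, so the maximizer set $J$ does not grow and there is nothing to propagate to $\partial G_h$. The missing ingredient is the diagonal. From $a_{ij}+b_{ij}=0$ for all $j\in S_i$, the sparsity \eqref{eq_b4}, \eqref{eq_b3}, and $\sum_{j=1}^N a_{ij}=(c,\varphi_i)=0$ one gets $a_{ii}+b_{ii}=0$; but $b_{ii}=-\sum_{j\neq i}b_{ij}\ge0$ by \eqref{eq_b2}--\eqref{eq_b3}, and $a_{ii}=\varepsilon\,\vert\varphi_i\vert_{1,\Omega}^2+(c\,\varphi_i,\varphi_i)>0$ because $(\bb\cdot\nabla\varphi_i,\varphi_i)=0$ for $\varphi_i\in H^1_0(\Omega)$ with $\nabla\cdot\bb=0$. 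This is the contradiction, and it is nowhere in your proof. (For non-strict maxima one additionally needs to sum over a connected component of maximizers and use the positive semidefiniteness of ${\mathbb B}({\rm U})$ together with the coercivity of the diffusion block, or an equivalent device.) The overall strategy — test the $i$-th equation, use the row-sum identity and the sign information from (A2) — is the right starting point, but as written the proof establishes only a collection of equalities that are perfectly consistent with an oscillating solution.
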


\begin{proof} See \cite{JK21}. \end{proof}

We will close this section with a brief discussion of a priori error estimates 
available for the nonlinear problem \eqref{23}, \eqref{23b}. To derive an error
estimate, it is convenient to write \eqref{23}, \eqref{23b} as a variational
problem where the algebraic stabilization term is represented using the form
\begin{displaymath}
   b_h(w;z,v)
   =\sum_{i,j=1}^N\,b_{ij}(w)\,z(x_j)\,v(x_i)\,,
   \qquad w,z,v\in C(\overline\Omega)\,,
\end{displaymath}
with $b_{ij}(w):=b_{ij}(\{w(x_i)\}_{i=1}^N)$, see \cite{JK21} for details. This
variational problem is stable with respect to the solution-dependent norm 
on $W_h$ defined by
\begin{displaymath}
   \| v\| _h^{}:=\Big(\varepsilon\,\vert v\vert _{1,\Omega}^2
   +\sigma_0\,\| v\| _{0,\Omega}^2
   +b_h(u_h;v,v)\Big)^{1/2}\,,\qquad v\in H^1(\Omega)\cap C(\overline\Omega)\,,
\end{displaymath}
assuming that $\sigma_0>0$ in \eqref{eq_ass_b_c}. This shows that the problem
\eqref{23}, \eqref{23b} really provides a stronger stability than the original
problem \eqref{21}, \eqref{21b}. 

The algebraic stabilization term leads to a consistency error whose behaviour 
with respect to $h$ depends on how the artificial diffusion matrix is 
constructed. Often, one has 
\begin{equation*}
   \vert b_{ij}(u_h)\vert \le\max\{\vert a_{ij}\vert ,\vert a_{ji}\vert \}\qquad\forall\,\,i\neq j\,,
\end{equation*}
which will be also the case in this paper. Under this assumption and assuming
further that the weak solution of \eqref{strong-steady} satisfies 
$u\in H^2(\Omega)$ and that $\sigma_0>0$, one can prove (cf.~\cite{JK21}) that 
the finite element function $u_h\in W_h$, corresponding via
\eqref{eq_vect_fcn_identification} to the solution ${\rm U}\in{\mathbb R}^N$ of 
the nonlinear algebraic problem \eqref{23}, \eqref{23b}, satisfies the estimate
\begin{align}
   \| u-u_h\| _h^{}&\le 
   C\,(\varepsilon+\sigma_0^{-1}\,\{\| \bb\| _{0,\infty,\Omega}^2
   +\| c\| _{0,\infty,\Omega}^2\}+\sigma_0h^2)^{1/2}\,h\,\| u\| _{2,\Omega}
   \nonumber\\
   &\hspace*{16mm}+ C\,(\varepsilon+\| \bb\| _{0,\infty,\Omega}\,h
   +\| c\| _{0,\infty,\Omega}^{}\,h^2)^{1/2}\,\vert i_hu\vert _{1,\Omega}\,,
   \label{eq:error_est}
\end{align}
where the constant $C$ is independent of $h$ and the data of problem 
\eqref{strong-steady}. If $\sigma_0=0$, then the estimate is deteriorated by a
negative power of $\varepsilon$, see \cite{BJK16} for details. We also refer
to \cite{BJK16} and \cite{BJKR18} for slightly improved error estimates under 
various additional assumptions.

The estimate \eqref{eq:error_est} does not imply any convergence in the
diffusion-dominated case (when $\varepsilon>\| \bb\| _{0,\infty,\Omega}\,h$) 
and it guarantees only the convergence order $1/2$ in the convection-dominated 
case. Numerical results presented in \cite{BJK16} show
that this result is sharp under the general assumptions made up to now. It is 
of course
desirable to design the artificial diffusion matrix ${\mathbb B}({\rm U})$ in
such a way that optimal convergence rates with respect to various norms are
obtained. For some algebraic stabilizations, optimal convergence rates were
indeed observed, however, a more detailed convergence studies revealed that the
convergence rates often depend on the considered meshes and data,
cf.~\cite{BJK16,BJK17}. The aim of
this paper is to analyze some of these observations and to propose an algebraic
stabilization for which optimal convergence rates can be observed in a wide
range of situations, in particular, for various types of meshes.

\section{Examples of algebraic stabilizations}
\label{s4}

In this section we present three examples of algebraic stabilizations based on
the papers \cite{Kuzmin07}, \cite{BJK17}, and \cite{JK21}, respectively. All 
these stabilizations fit into the framework of the previous section.

\subsection{Algebraic flux correction with the Kuzmin limiter}
\label{s41}

To derive an algebraic flux correction (AFC) scheme for the problem \eqref{21},
\eqref{21b}, one first introduces the artificial diffusion matrix 
$\mathbb D=(d_{ij})_{i,j=1}^N$ by
\begin{equation*}
   d_{ij}=d_{ji}=-\max\{a_{ij},0,a_{ji}\}\qquad\forall\,\,i\neq j\,,\qquad\qquad
   d_{ii}=-\sum_{j\neq i}\,d_{ij}\,.
\end{equation*}
Note that this matrix possesses the properties \eqref{eq_b1}--\eqref{eq_b4}.
If $({\mathbb D}\,{\rm U})_i$ is added to the left-hand side of \eqref{21},
one obtains a problem satisfying the DMP. However, this stabilized problem is
too diffusive. Therefore, one first adds the term $({\mathbb D}\,{\rm U})_i$ to 
both sides of \eqref{21}, uses the identity
\begin{displaymath}
   ({\mathbb D}\,{\rm U})_i=\sum_{j=1}^N\,f_{ij}\qquad\mbox{with}\qquad
   f_{ij}=d_{ij}\,(u_j-u_i)
\end{displaymath}
and then, on the right-hand side, one limits those anti-diffusive fluxes 
$f_{ij}$ that would otherwise cause spurious oscillations. The limiting is 
achieved by multiplying the fluxes by solution dependent limiters 
$\alpha_{ij}\in[0,1]$ satisfying
\begin{equation}
   \alpha_{ij}=\alpha_{ji}\,,\qquad i,j=1,\dots,N\,.\label{31}
\end{equation}
This leads to the algebraic problem \eqref{23}, \eqref{23b} with
\begin{equation}\label{afc-bij}
   b_{ij}({\rm U})=(1-\alpha_{ij}({\rm U}))\,d_{ij}\qquad\forall\,\,i\neq j\,,
   \qquad\quad
   b_{ii}({\rm U})=-\sum_{j\neq i}\,b_{ij}({\rm U})\,.
\end{equation}
This matrix $(b_{ij}({\rm U}))_{i,j=1}^N$ satisfies the assumptions
\eqref{eq_b1}--\eqref{eq_b4}. A theoretical analysis of this AFC scheme
concerning the solvability, local DMP and error estimation can be found in 
\cite{BJK16} where also a detailed derivation of the scheme is presented.

The properties of the above-described AFC scheme significantly depend on the 
choice of the limiters $\alpha_{ij}$. Here we present the Kuzmin limiter 
proposed in \cite{Kuzmin07} which was thoroughly investigated in \cite{BJK16}
and can be considered as a standard limiter for algebraic stabilizations of 
steady-state convection--diffusion--reaction equations.

To define the limiter of \cite{Kuzmin07}, one first computes, for
$i=1,\dots,M$,
\begin{equation}\label{46}
   P_i^+=\sum_{\mbox{\parbox{8mm}{\centerline{$\scriptstyle j\in S_i$}
   \vspace*{-1mm}
   \centerline{$\scriptstyle a_{ji}\le a_{ij}$}}}}\,f_{ij}^+\,,\quad\,\,
   P_i^-=\sum_{\mbox{\parbox{8mm}{\centerline{$\scriptstyle j\in S_i$}
   \vspace*{-1mm}
   \centerline{$\scriptstyle a_{ji}\le a_{ij}$}}}}\,f_{ij}^-\,,\quad\,\,
   Q_i^+=-\sum_{j\in S_i}\,f_{ij}^-\,,\quad\,\,
   Q_i^-=-\sum_{j\in S_i}\,f_{ij}^+\,,
\end{equation}
where $f_{ij}=d_{ij}\,(u_j-u_i)$,
$f_{ij}^+=\max\{0,f_{ij}\}$, and $f_{ij}^-=\min\{0,f_{ij}\}$. Then, one defines
\begin{equation}\label{R-definition}
   R_i^+=\min\left\{1,\frac{Q_i^+}{P_i^+}\right\},\quad
   R_i^-=\min\left\{1,\frac{Q_i^-}{P_i^-}\right\},\qquad
   i=1,\dots,M\,.
\end{equation}
If $P_i^+$ or $P_i^-$ vanishes, one sets $R_i^+=1$ or $R_i^-=1$, respectively.
At Dirichlet nodes, these quantities are also set to be $1$, i.e.,
\begin{equation}\label{eq:R-Dir}
   R_i^+=1\,,\quad R_i^-=1\,,\qquad i=M+1,\dots,N\,.
\end{equation}
Furthermore, one sets
\begin{equation}\label{alpha-definition}
        \widetilde\alpha_{ij}=\left\{
        \begin{array}{cl}
                R_i^+\quad&\mbox{if}\,\,\,f_{ij}>0\,,\\
                1\quad&\mbox{if}\,\,\,f_{ij}=0\,,\\
                R_i^-\quad&\mbox{if}\,\,\,f_{ij}<0\,,
        \end{array}\right.\qquad\qquad
        i,j=1,\dots,N\,.
\end{equation}
Finally, one defines
\begin{equation}\label{symm_alpha}
   \alpha_{ij}=\alpha_{ji}=\widetilde\alpha_{ij}\qquad\mbox{if}\quad
   a_{ji}\le a_{ij}\,,\qquad i,j=1,\dots,N\,.
\end{equation}

It was proved in \cite{BJK16} that $\alpha_{ij}({\rm U})(u_j-u_i)$ are
continuous functions of ${\rm U}\in{\mathbb R}^N$ so that the assumption (A1)
is satisfied for $b_{ij}({\rm U})$ defined by \eqref{afc-bij} with the Kuzmin 
limiter. The validity of (A2) was proved in \cite{Kno17} under the assumption
\begin{equation}\label{assumption_min}
   \min\{a_{ij},a_{ji}\}\le0\qquad
   \forall\,\,i=1,\dots,M\,,\,\,j=1,\dots,N\,,\,\,i\neq j\,.
\end{equation}
On the other hand, it was shown in \cite{Kno17} that the DMP generally does not 
hold if the condition \eqref{assumption_min} is not satisfied. Since the
convection matrix is skew-symmetric, the condition \eqref{assumption_min} can
be violated if the diffusion matrix has large positive entries (which may 
occur if the angles between facets of $\TT_h$ exceed $\pi/2$) or if the
reaction coefficient $c$ is large. As a remedy for the latter case, a lumping
of the reaction term was considered in \cite{BJK16}. This, however, may
increase the smearing of layers as demonstrated in \cite{JK21}. Let us mention
that, in the two-dimensional case and for $c=0$ or a lumped reaction term,
the validity of \eqref{assumption_min} is guaranteed for Delaunay meshes (i.e.,
meshes where the sum of any pair of angles opposite a common edge is smaller 
than, or equal to, $\pi$).

Since it is desirable that the DMP holds on arbitrary meshes and without a
lumping of the reaction term, it is necessary to apply other limiters or
different algebraic stabilizations. This will be the subject of the following
two sections.

\subsection{Algebraic flux correction with the BJK limiter}
\label{s42}

In this section, we again consider an AFC scheme, i.e., the matrix 
${\mathbb B}({\rm U})$ in \eqref{23}, \eqref{23b} is defined by 
\eqref{afc-bij}. A small difference to the previous section is that the matrix
$(a_{ij})_{i,j=1}^N$ is modified by
\begin{equation*}
   a_{ji}:=0\quad\mbox{if}\quad a_{ij}<0\,,\qquad 
   i=1,\dots,M\,,\,j=M+1,\dots,N\,.
\end{equation*}
This modification affects only the definition of the matrix $\mathbb D$
and reduces the amount of artificial diffusion introduced by the algebraic
stabilization. We shall describe the so-called BJK limiter proposed in
\cite{BJK17} using some ideas of \cite{Kuzmin12}. The definition of this 
limiter is inspired by the Zalesak algorithm \cite{Zalesak79} for the 
time-dependent case. 

The definition of the limiter again relies on local quantities 
$P_i^+$, $P_i^-$, $Q_i^+$, $Q_i^-$ which are now computed for $i=1,\dots,M$ by
\begin{align}
   P_i^+&=\sum_{j\in S_i}\,f_{ij}^+\,,\qquad
   P_i^-=\sum_{j\in S_i}\,f_{ij}^-\,,\label{eq:BJK_p}\\
   Q_i^+&=q_i\,(u_i-u_i^{\rm max})\,,\qquad
   Q_i^-=q_i\,(u_i-u_i^{\rm min})\,,\label{eq:BJK_q}
\end{align}
where again $f_{ij}=d_{ij}\,(u_j-u_i)$ and
\begin{equation*}
   u_i^{\max}= \max_{j\in S_i\cup\{i\}}\,u_j\,,\qquad
   u_i^{\min}= \min_{j\in S_i\cup\{i\}}\,u_j\,,\qquad
   q_i=\sum_{j\in S_i}\,d_{ij}\,.
\end{equation*}
Then, one defines
\begin{equation}\label{R-definition-BJK}
   R_i^+=\min\left\{1,\frac{\mu_i\,Q_i^+}{P_i^+}\right\},\quad
   R_i^-=\min\left\{1,\frac{\mu_i\,Q_i^-}{P_i^-}\right\},\qquad
   i=1,\dots,M\,,
\end{equation}
with fixed constants $\mu_i>0$. If $P_i^+$ or $P_i^-$ vanishes, one again 
sets $R_i^+=1$ or $R_i^-=1$, respectively. The definition \eqref{eq:R-Dir} of 
$R_i^\pm$ at Dirichlet nodes is applied, too, and one again defines the factors 
$\widetilde\alpha_{ij}$ by \eqref{alpha-definition}. Finally, the limiter
functions are defined by
\begin{equation}\label{eq:BJK_symm_alpha}
   \alpha_{ij}=\min\{\widetilde\alpha_{ij},\widetilde\alpha_{ji}\}\,,
   \qquad i,j=1,\dots,N\,.
\end{equation}

The validity of the assumptions (A1) and (A2) was proved in \cite{BJK17}
without any additional assumptions on the matrix $(a_{ij})_{i,j=1}^N$. Thus, in
particular, the DMP holds for arbitrary simplicial meshes and any nonnegative
reaction coefficient $c$. Moreover, it was shown in \cite{BJK17} that the
constants $\mu_i$ can be defined in such a way that the AFC scheme with the 
BJK limiter is linearity preserving, i.e., ${\mathbb B}(u)=0$ for 
$u\in P_1(\mathbb{R}^d)$. This property may lead to improved convergence
results, see, e.g., \cite{BBK17b,BJKR18}. 

To formulate a sufficient condition for the linearity preservation, we
introduce the patches
\begin{equation}\label{eq:patch}
   \Delta_i = \cup\{ T\in\TT_h\,:\,\,x_i\in T\}\,,\qquad i=1,\dots,M\,,
\end{equation}
consisting of simplices from $\TT_h$ sharing the vertex $x_i$. Then the
AFC scheme with the BJK limiter is linearity preserving if
\begin{equation}\label{eq:mu_i}
   \mu_i\ge\frac{\displaystyle\max_{x_j\in\partial\Delta_i}\,\vert x_i-x_j\vert}
                {\mbox{\rm dist}(x_i,\partial\Delta_i^{\rm conv})}\,,
   \qquad\quad i=1,\dots,M\,,
\end{equation}
where $\Delta_i^{\rm conv}$ is the convex hull of $\Delta_i$. It was also
proved in \cite{BJK17} that it suffices to set $\mu_i=1$ if the patch
$\Delta_i$ is symmetric with respect to the vertex $x_i$. Note that large 
values of the constants $\mu_i$ cause that more limiters $\alpha_{ij}$ are
equal to 1 and hence less artificial diffusion is added, which makes it 
possible to obtain sharp approximations of layers. On the other hand, however, 
large values of $\mu_i$'s also cause that the numerical solution of the 
nonlinear algebraic problem becomes more involved.

\subsection{Monotone upwind-type algebraically stabilized method}
\label{s43}

Although the BJK limiter presented in the previous section has nice
theoretical properties, numerical experiments revealed that it has also some
drawbacks in comparison with the Kuzmin limiter. In particular, the nonlinear
algebraic problems are much more difficult to solve and the approximate
solutions are sometimes less accurate away from layers. Therefore, another
approach based on the Kuzmin limiter was developed in \cite{Kno21,JK21}
that will be presented in this section.

As we mentioned in Section~\ref{s41}, the DMP generally does not hold for the 
AFC scheme with the Kuzmin limiter if the condition \eqref{assumption_min} is 
not satisfied. The need of \eqref{assumption_min} for proving the assumption 
(A2) is a consequence of the condition $a_{ji}\le a_{ij}$ used in 
\eqref{symm_alpha} to symmetrize the factors $\widetilde\alpha_{ij}$. 
A possible remedy is to replace \eqref{symm_alpha} by \eqref{eq:BJK_symm_alpha} 
and to define $P_i^\pm$ by \eqref{eq:BJK_p}. Then the DMP is satisfied without 
any additional condition on the matrix $(a_{ij})_{i,j=1}^N$ but the method is 
more diffusive then the scheme from Section~\ref{s41}, see \cite{Kno17}.

The inequality $a_{ji}<a_{ij}$ often means that the vertex $x_i$ lies in the 
upwind direction with respect to the vertex $x_j$, see \cite{Kno17} for a
discussion on this topic. Consequently, the use of the inequality 
$a_{ji}\le a_{ij}$ in \eqref{symm_alpha} causes that $\alpha_{ij}=\alpha_{ji}$ 
is defined using quantities computed at the upwind vertex of the edge with end 
points $x_i$, $x_j$.  It turns out that this feature has a positive influence 
on the quality of the approximate solutions and on the convergence of the 
iterative process for solving the nonlinear problem \eqref{23}, \eqref{23b}. 

In order to obtain a method possessing the mentioned upwind feature and
satisfying the DMP on arbitrary meshes, the definition of the matrix 
${\mathbb B}({\rm U})$ was changed in \cite{JK21} to
\begin{align}
   b_{ij}({\rm U})
   &=-\max\{\beta_{ij}({\rm U})\,a_{ij},0,\beta_{ji}({\rm U})\,a_{ji}\}\,,\qquad
   i,j=1,\dots,N\,,\,\,i\neq j\,,\label{asm-bij}\\
   b_{ii}({\rm U})
   &=-\sum_{j\neq i}\,b_{ij}({\rm U})\,,\qquad i=1,\dots,N\,,\label{asm-bii}
\end{align}
with some solution-dependent factors $\beta_{ij}({\rm U})\in[0,1]$. This matrix 
again satisfies the assumptions \eqref{eq_b1}--\eqref{eq_b4} but, in contrast 
to \eqref{afc-bij}, the formula \eqref{asm-bij} leads to a symmetric matrix 
${\mathbb B}({\rm U})$ also if the factors $\beta_{ij}$ are not symmetric. 
This makes it possible to get rid of the symmetry condition \eqref{31}.

If the condition \eqref{assumption_min} is satisfied, then
\begin{equation*}
        b_{ij}({\rm U})=\left\{
        \begin{array}{cl}
         \beta_{ij}({\rm U})\,d_{ij}\quad&\mbox{if}\,\,\,a_{ji}\le a_{ij}\,,\\
         \beta_{ji}({\rm U})\,d_{ij}\quad&\mbox{otherwise}\,,
        \end{array}\right.
\end{equation*}
for $i=1,\dots,M$ and $j=1,\dots,N$ with $i\neq j$. Thus, in this case, the 
definition \eqref{asm-bij} implicitly comprises the favourable upwind feature 
discussed above and the method \eqref{23}, \eqref{23b} can be again written in 
the form of an AFC scheme. Moreover, if the functions $\beta_{ij}$ form a 
symmetric matrix and $\alpha_{ij}=1-\beta_{ij}$, then the definitions 
\eqref{afc-bij} and \eqref{asm-bij}, \eqref{asm-bii} are equivalent.

Thus, let us consider the algebraic problem \eqref{23}, \eqref{23b} with the
artificial diffusion matrix given by \eqref{asm-bij} and \eqref{asm-bii} and
with any functions $\beta_{ij}:{\mathbb R}^N\to[0,1]$ satisfying, for any $i,j\in\{1,\dots,N\}$,
\begin{equation}
   \mbox{if $a_{ij}>0$, then $\beta_{ij}({\rm U})(u_j-u_i)$ is a continuous 
          function of ${\rm U}\in{\mathbb R}^N$}\,.\label{asm_beta2}
\end{equation}
Then one has the following existence result.

\begin{theorem}\label{asm_existence}
Let the matrix $(b_{ij}({\rm U}))_{i,j=1}^N$ be defined by \eqref{asm-bij} and 
\eqref{asm-bii} with functions $\beta_{ij}:{\mathbb R}^N\to[0,1]$ satisfying 
\eqref{asm_beta2} for any $i,j\in\{1,\dots,N\}$. Then Assumption~(A1) is 
satisfied and the nonlinear algebraic problem \eqref{23}, \eqref{23b} has a 
solution.
\end{theorem}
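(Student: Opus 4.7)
My plan is to verify Assumption~(A1) directly from the definition \eqref{asm-bij}--\eqref{asm-bii} and then invoke Theorem~\ref{existence}, which delivers existence as soon as \eqref{eq_b1}--\eqref{eq_b3} and (A1) hold. The structural properties \eqref{eq_b1}--\eqref{eq_b3} are essentially built into the construction: symmetry holds because the multiset $\{\beta_{ij}a_{ij},0,\beta_{ji}a_{ji}\}$ is invariant under $i\leftrightarrow j$; nonpositivity of off-diagonal entries follows because the max of a set containing $0$ is nonnegative; and the row-sum condition is enforced by \eqref{asm-bii}. So the substance of the proof lies in checking (A1).

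The boundedness half of (A1) will be essentially free: from $\beta_{ij},\beta_{ji}\in[0,1]$ and the presence of $0$ in the max, one obtains $|b_{ij}({\rm U})|\le\max\{|a_{ij}|,|a_{ji}|\}$ uniformly in ${\rm U}$, which settles the case $j\in\{M+1,\dots,N\}$. For continuity of $b_{ij}({\rm U})(u_j-u_i)$, I would split into cases according to the signs of $a_{ij}$ and $a_{ji}$. The key observation is that any term $\beta_{kl}a_{kl}$ with $a_{kl}\le 0$ is itself nonpositive, hence dominated by the $0$ always present in the max; consequently $\beta$-factors only appear in the final expression when the corresponding $a$ is positive. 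When exactly one of $a_{ij},a_{ji}$ is positive, say $a_{ij}>0$, the expression collapses to $b_{ij}({\rm U})(u_j-u_i)=-a_{ij}\,\beta_{ij}({\rm U})(u_j-u_i)$, whose continuity is precisely \eqref{asm_beta2}; the mirror case uses $\beta_{ji}({\rm U})(u_i-u_j)$.

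The delicate case is $a_{ij}>0$ and $a_{ji}>0$, where
\[
   b_{ij}({\rm U})(u_j-u_i)=-(u_j-u_i)\,\max\{\beta_{ij}({\rm U})\,a_{ij},\,\beta_{ji}({\rm U})\,a_{ji}\}.
\]
Away from the set $\{u_i=u_j\}$ one can absorb $(u_j-u_i)$ into the max, producing either a $\max$ or a $\min$ (depending on the sign of $u_j-u_i$) of the two products $\beta_{ij}a_{ij}(u_j-u_i)$ and $\beta_{ji}a_{ji}(u_j-u_i)$; both are continuous by \eqref{asm_beta2}, so continuity is inherited. The step I expect to be the main obstacle is continuity at points where $u_i=u_j$: here the individual factors $\beta_{ij},\beta_{ji}$ are not assumed to be continuous, so one cannot pass to the limit inside the max. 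I would close the gap by the crude bound $|b_{ij}({\rm U})(u_j-u_i)|\le\max\{a_{ij},a_{ji}\}\,|u_j-u_i|$, which uses only $\beta\le 1$ and forces the function to vanish continuously as $u_j-u_i\to 0$. Once (A1) is established, Theorem~\ref{existence} yields the desired solution of \eqref{23},~\eqref{23b}.
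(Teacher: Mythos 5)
Your argument is correct and complete. Note that the paper itself proves Theorem~\ref{asm_existence} only by citation of \cite{JK21}, so there is no in-paper argument to compare against line by line; but your proof stands on its own. The case split on the signs of $a_{ij}$ and $a_{ji}$ is the right decomposition: when at most one is positive the max collapses and continuity is literally \eqref{asm_beta2} (applied with the indices swapped in the mirror case), and when both are positive, absorbing the factor $u_j-u_i$ into the max yields a max or a min of the two continuous products $a_{ij}\beta_{ij}({\rm U})(u_j-u_i)$ and $a_{ji}\beta_{ji}({\rm U})(u_j-u_i)$ on each of the open sets $\{u_i>u_j\}$ and $\{u_i<u_j\}$. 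You correctly identify the only delicate point, continuity across $\{u_i=u_j\}$ where the individual $\beta$'s need not converge, and the squeeze $|b_{ij}({\rm U})(u_j-u_i)|\le\max\{|a_{ij}|,|a_{ji}|\}\,|u_j-u_i|$ closes it; this is the same device the author uses in the proof of Theorem~\ref{smuas_a1} at points with $\bar u_i=\bar u_j$, so your route is consistent with the techniques employed elsewhere in the paper for the analogous continuity claim. The boundedness half of (A1) and the properties \eqref{eq_b1}--\eqref{eq_b3} needed to invoke Theorem~\ref{existence} are, as you say, immediate from $\beta_{ij}\in[0,1]$ and the presence of $0$ inside the max.
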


\begin{proof} See \cite{JK21}.\end{proof}

Rewriting the definition of the Kuzmin limiter under the condition
\eqref{assumption_min}, the following definition of $\beta_{ij}$ was introduced
in \cite{JK21}. First, for any $i\in\{1,\dots,M\}$, one computes
\begin{alignat}{2}
   &P_i^+=\sum_{\mbox{\parbox{8mm}{\centerline{$\scriptstyle j\in S_i$}
   \vspace*{-1mm}
   \centerline{$\scriptstyle a_{ij}>0$}}}}\,a_{ij}\,(u_i-u_j)^+\,,\qquad\quad
   &&P_i^-=\sum_{\mbox{\parbox{8mm}{\centerline{$\scriptstyle j\in S_i$}
   \vspace*{-1mm}
   \centerline{$\scriptstyle a_{ij}>0$}}}}\,a_{ij}\,(u_i-u_j)^-\,,\label{def-beta_ij_1}
   \\[1mm]
   &Q_i^+=\sum_{j\in S_i}\,s_{ij}\,(u_j-u_i)^+\,,\qquad\quad
   &&Q_i^-=\sum_{j\in S_i}\,s_{ij}\,(u_j-u_i)^-\,,\label{def-beta_ij_2}
\end{alignat}
with
\begin{equation*}
   s_{ij}=\max\{\vert a_{ij}\vert,a_{ji}\}\,.
\end{equation*}
Then, one defines $R_i^\pm$ by \eqref{R-definition} and \eqref{eq:R-Dir}, and
sets
\begin{equation}\label{eq:betaij}
        \beta_{ij}=\left\{
        \begin{array}{ll}
                1-R_i^+\quad&\mbox{if}\,\,\,u_i>u_j\,,\\
                0\quad&\mbox{if}\,\,\,u_i=u_j\,,\\
                1-R_i^-\quad&\mbox{if}\,\,\,u_i<u_j\,,
        \end{array}\right.\qquad\qquad i,j=1,\dots,N\,.
\end{equation}

It was proved in \cite{JK21} that the resulting method satisfies the
assumptions (A1) and (A2) without any additional assumptions on the matrix
$(a_{ij})_{i,j=1}^N$. Thus, the DMP holds on arbitrary simplicial meshes and
for any nonnegative reaction coefficient $c$. Due to the above-discussed upwind
feature, the name Monotone Upwind-type Algebraically Stabilized (MUAS) method
was introduced in \cite{JK21}.

If the condition \eqref{assumption_min} holds, then the only difference between
the MUAS method and the AFC scheme with the Kuzmin limiter is the definition of 
$Q_i^\pm$ since the relations \eqref{46} give \eqref{def-beta_ij_2} with
$s_{ij}=\vert d_{ij}\vert$. In the convection-dominated regime, the difference 
is negligible and both methods lead to almost the same results. Therefore, the
MUAS method preserves the advantages of the AFC scheme from Section~\ref{s41}
which are available under the condition \eqref{assumption_min}. Note that, 
without the assumption \eqref{assumption_min}, the application of the AFC 
scheme with the Kuzmin limiter does not make much sense since the main goal of 
the AFC, i.e., the validity of the DMP, is not achieved in general. In the
diffusion-dominated case, the use of $s_{ij}$ instead of $\vert d_{ij}\vert$ 
may improve the accuracy and convergence behaviour when non-Delaunay meshes are
used, see \cite{JK21}.

\section{Numerical and analytical studies of AFC schemes}
\label{s5}

The convergence properties of the AFC scheme with the Kuzmin limiter from
Section~\ref{s41} were thoroughly tested in \cite{BJK16} for various grids and 
the following example.

\begin{example}\label{ex:smooth}
Problem \eqref{strong-steady} is considered with $\Omega = (0,1)^2$, with 
different values of $\varepsilon$, and with $\bb = (3,2)^T$, $c=1$, $u_b=0$, 
and the right-hand side $g$ chosen so that
$$
u(x,y) = 100\,x^2\,(1-x)^2\,y\,(1-y)\,(1-2y)
$$
is the solution of \eqref{strong-steady}.
\end{example}

The coarsest levels of the grids considered in \cite{BJK16} are shown in 
Fig.~\ref{fig:grids1}.
\begin{figure}[t]
\centerline{
\includegraphics[width=0.185\textwidth]{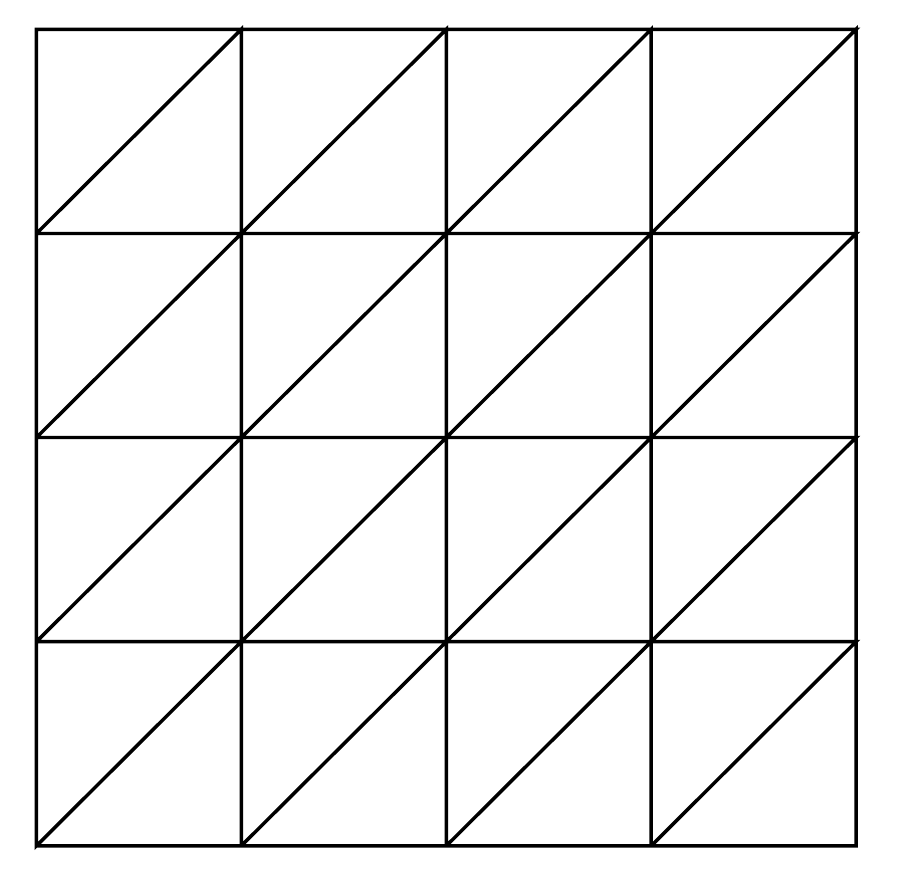}\hspace*{1ex}
\includegraphics[width=0.185\textwidth]{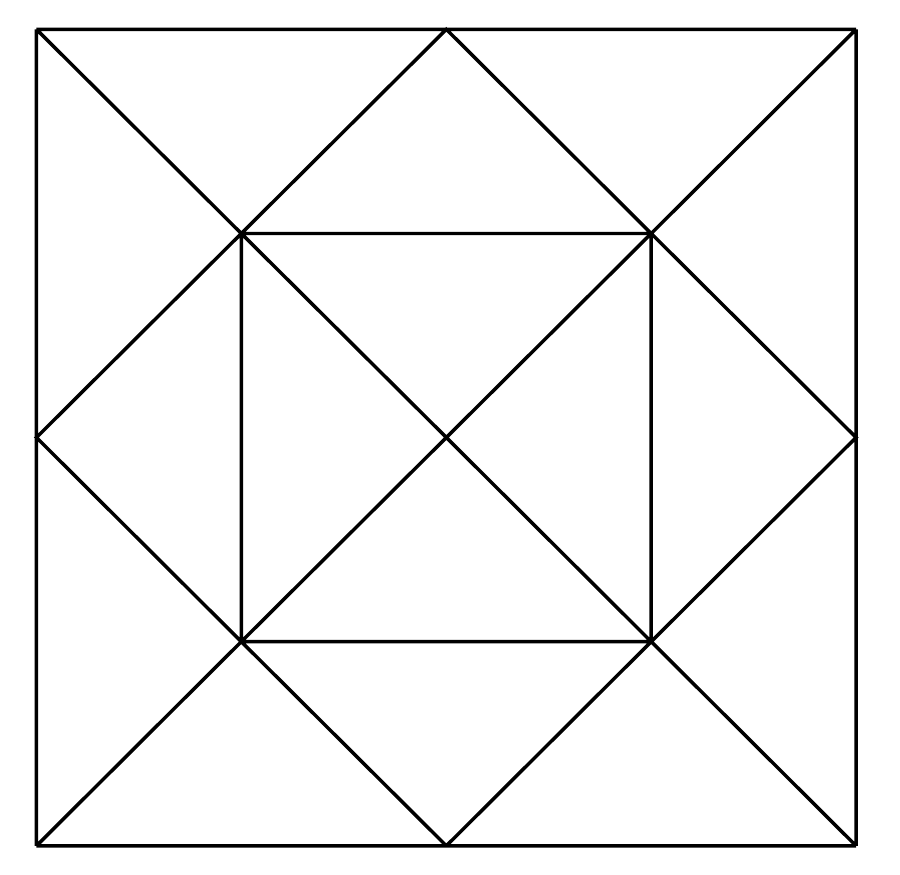}\hspace*{1ex}
\includegraphics[width=0.185\textwidth]{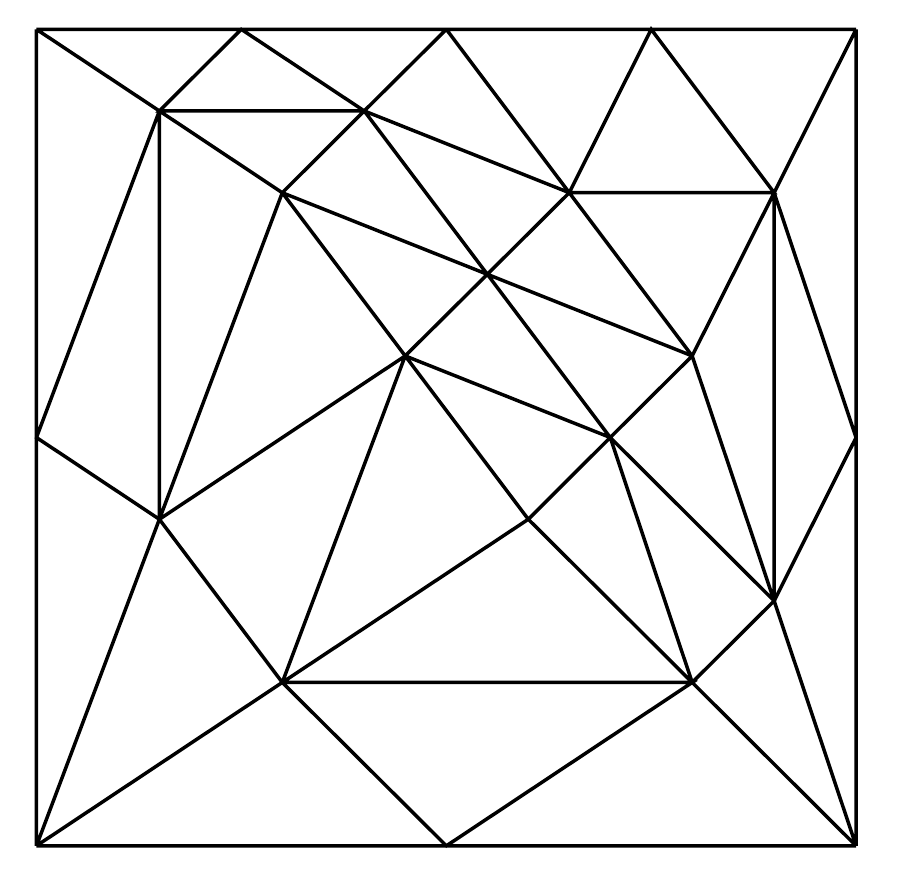}\hspace*{1ex}
\includegraphics[width=0.185\textwidth]{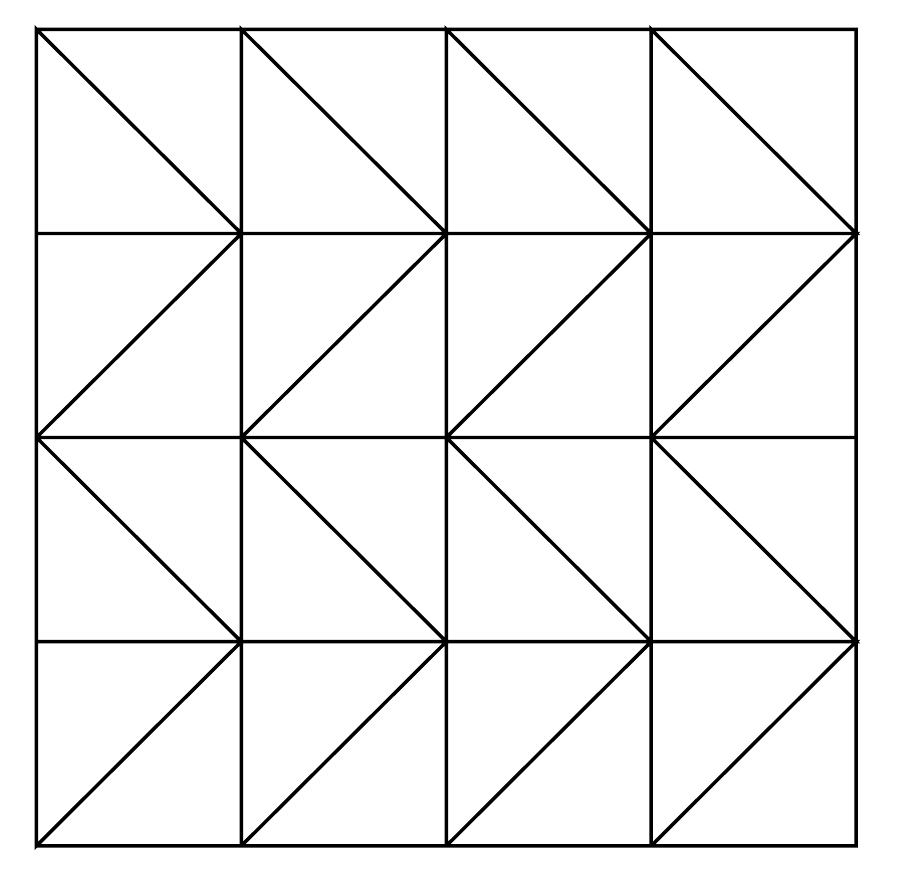}\hspace*{1ex}
\includegraphics[width=0.185\textwidth]{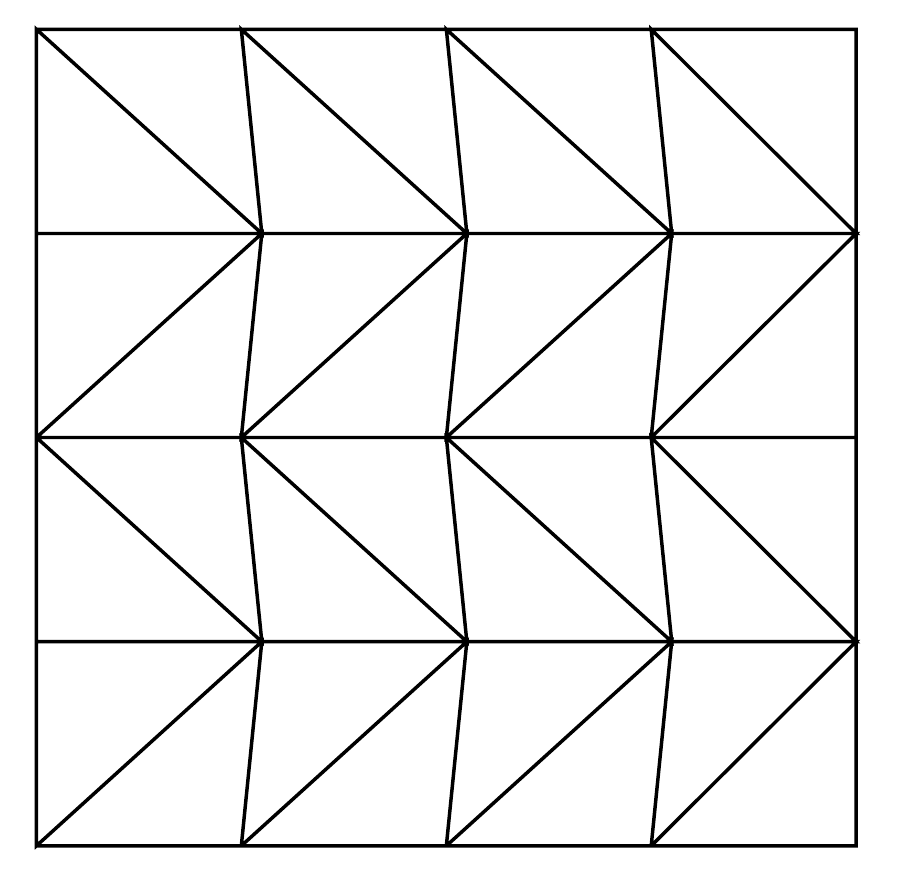}
}
\caption{Grids 1 -- 5 (left to right).}\label{fig:grids1}
\end{figure}
Grids 1, 2, and 3 were refined uniformly whereas Grid 4 was always obtained 
from Grid 1 by changing the directions of the diagonals in even rows of squares (from below). Grid 5 was obtained from Grid 4 by shifting interior nodes to the 
right by the tenth of the horizontal mesh width on each even horizontal mesh 
line. Note that Grids 3 and 5 are not of Delaunay type.

Errors of the approximate solutions of Example~\ref{ex:smooth} with 
$\varepsilon=10^{-8}$ computed using the AFC scheme with the
Kuzmin limiter for Grid 1 can be seen in Table~\ref{tab1}. The results slightly
differ from those in \cite{BJK16} since, in contrast to the present paper, a 
lumping of the reaction term was applied in \cite{BJK16}. The value of $ne$ 
represents the number of edges along one horizontal mesh line (thus, $ne=4$ 
for Grid~1 in Fig.~\ref{fig:grids1}). One observes the usual optimal orders of
convergence with respect to the $L^2$ norm and the $H^1$ seminorm. Moreover,
the convergence order with respect to the norm $\|\cdot\|_h^{}$ is much higher
than predicted by \eqref{eq:error_est}. However, if the computation is repeated
on Grid~4, one observes in Table~\ref{tab2} that the convergence orders with 
respect to all three norms deteriorate by 1 and, in particular, one has no 
error reduction with respect to the $H^1$ seminorm. A similar behaviour can be 
observed for Grids 3 and 5. For Grid 2, the deterioration of the convergence is 
less pronounced but the convergence orders are also far from being optimal (see 
\cite{BJK16} for the case with a lumped reaction term leading to similar 
results). Let us mention that, in all these computations, the matrix
$(a_{ij})_{i,j=1}^N$ satisfies the condition \eqref{assumption_min}, which 
guarantees the validity of the DMP.
\begin{table}[h]
\begin{center}
\begin{minipage}{274pt}
\caption{Example~\ref{ex:smooth}: $\varepsilon=10^{-8}$, numerical results for
Grid 1 computed using the AFC scheme with the Kuzmin limiter}
\label{tab1}
\begin{tabular}{@{}rcccccc@{}}
\toprule
$ne$ & $\|u-u_h\|_{0,\Omega}^{}$ & order &   $\vert u-u_h\vert_{1,\Omega}^{}$ &
order & $\|u-u_h\|_h^{}$ & order\\
\midrule
  16 &  1.934e$-$2 &  1.60 & 4.937e$-$1 &  0.98 & 5.007e$-$2 &  1.87\\ 
  32 &  5.359e$-$3 &  1.85 & 2.305e$-$1 &  1.10 & 1.149e$-$2 &  2.12\\
  64 &  1.385e$-$3 &  1.95 & 1.082e$-$1 &  1.09 & 2.649e$-$3 &  2.12\\
 128 &  3.442e$-$4 &  2.01 & 5.154e$-$2 &  1.07 & 6.152e$-$4 &  2.11\\
 256 &  8.536e$-$5 &  2.01 & 2.566e$-$2 &  1.01 & 1.586e$-$4 &  1.96\\
 512 &  2.126e$-$5 &  2.01 & 1.342e$-$2 &  0.93 & 3.876e$-$5 &  2.03\\
\botrule
\end{tabular}
\end{minipage}
\end{center}
\end{table}
\begin{table}[h]
\begin{center}
\begin{minipage}{274pt}
\caption{Example~\ref{ex:smooth}: $\varepsilon=10^{-8}$, numerical results for
Grid 4 computed using the AFC scheme with the Kuzmin limiter}
\label{tab2}
\begin{tabular}{@{}rcccccc@{}}
\toprule
$ne$ & $\|u-u_h\|_{0,\Omega}^{}$ & order &   $\vert u-u_h\vert_{1,\Omega}^{}$ &
order & $\|u-u_h\|_h^{}$ & order\\
\midrule
  16 &  2.019e$-$2 &  1.65 & 6.005e$-$1 &  0.68 & 5.663e$-$2 &  1.74\\
  32 &  6.285e$-$3 &  1.68 & 4.832e$-$1 &  0.31 & 2.138e$-$2 &  1.41\\
  64 &  2.308e$-$3 &  1.45 & 4.549e$-$1 &  0.09 & 9.485e$-$3 &  1.17\\
 128 &  1.092e$-$3 &  1.08 & 4.442e$-$1 &  0.03 & 4.490e$-$3 &  1.08\\
 256 &  5.543e$-$4 &  0.98 & 4.368e$-$1 &  0.02 & 2.187e$-$3 &  1.04\\
 512 &  2.823e$-$4 &  0.97 & 4.327e$-$1 &  0.01 & 1.083e$-$3 &  1.01\\
\botrule
\end{tabular}
\end{minipage}
\end{center}
\end{table}

On the other hand, there are various grids for which optimal convergence orders 
can be observed. Examples of such grids are given in Fig.~\ref{fig:grids2}. The 
finer variants of Grid~6 are obtained by uniform refinement like for Grid~1 
whereas Grid~7 is obtained from Grid~6 by changing the directions of some of 
the diagonals. Grid~8 is obtained from Grid~6 by adding the second diagonal in 
each small square. Finer variants of Grid~9 are also not constructed by 
refining the coarse level but each level is constructed separately, cf.~the 
rightmost grid in Fig.~\ref{fig:grids2}. Obviously, the basic difference 
between Grids 2--5 and Grids 1 and 6--9 is that, in the latter case, (most of) 
the patches $\Delta_i$ defined by \eqref{eq:patch} are symmetric. Thus, it 
seems that the local symmetry of the grids is important for optimal convergence 
rates.
\begin{figure}[b]
\centerline{
\includegraphics[width=0.185\textwidth]{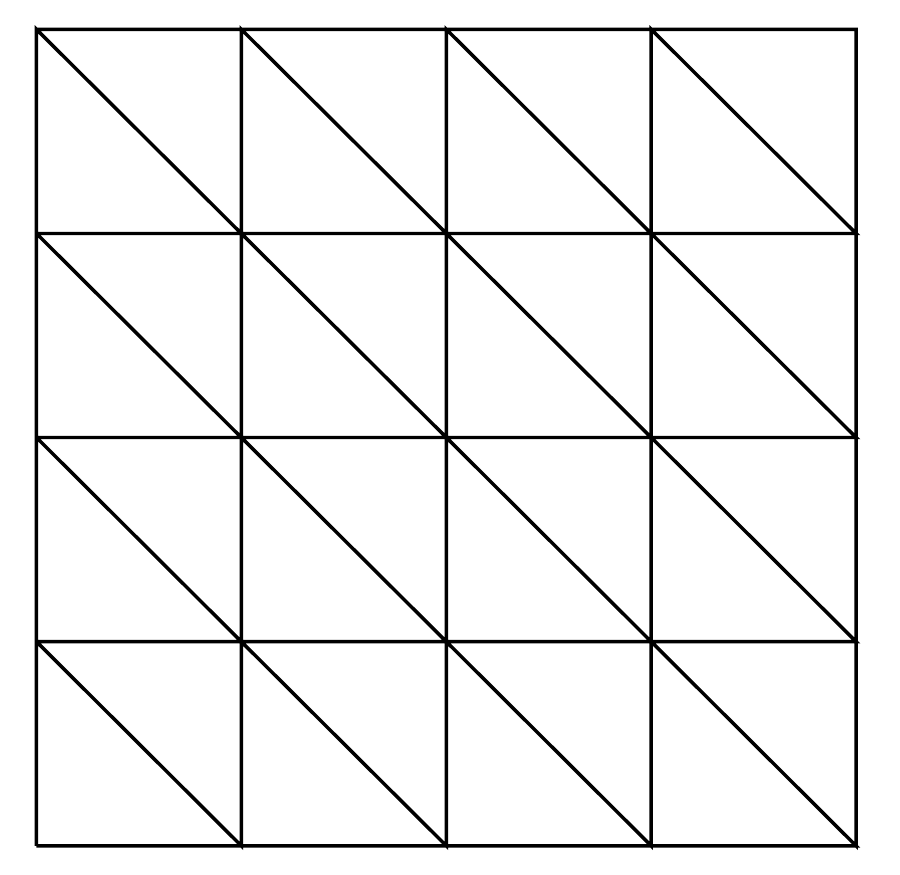}\hspace*{1ex}
\includegraphics[width=0.185\textwidth]{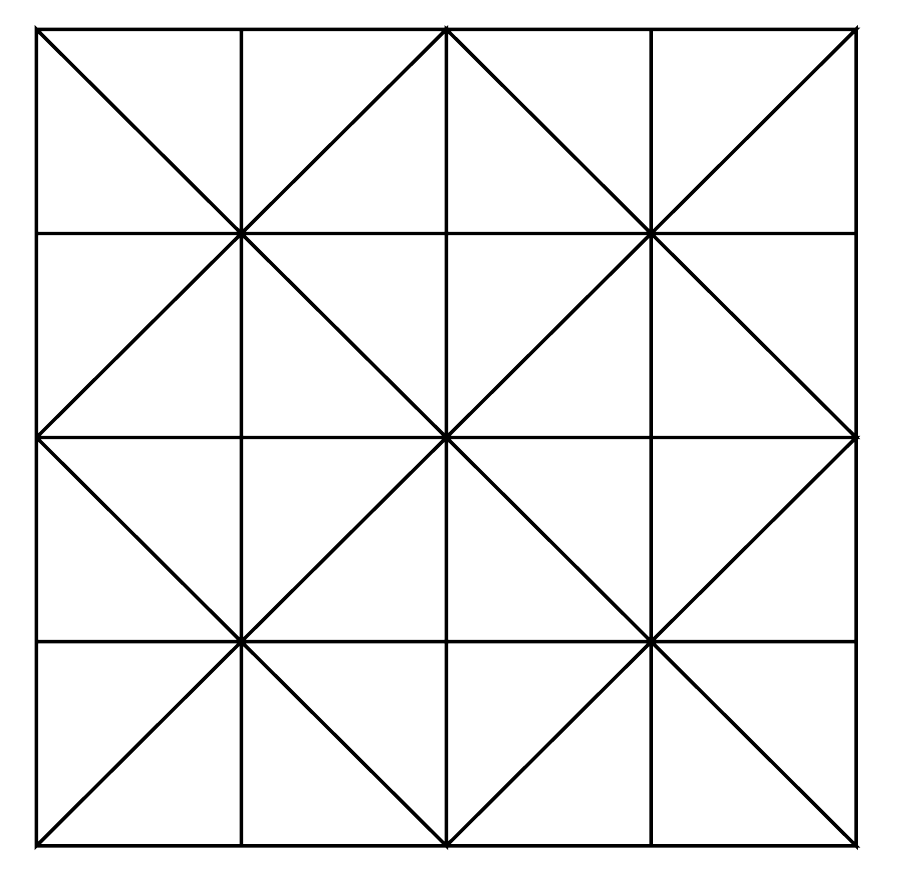}\hspace*{1ex}
\includegraphics[width=0.185\textwidth]{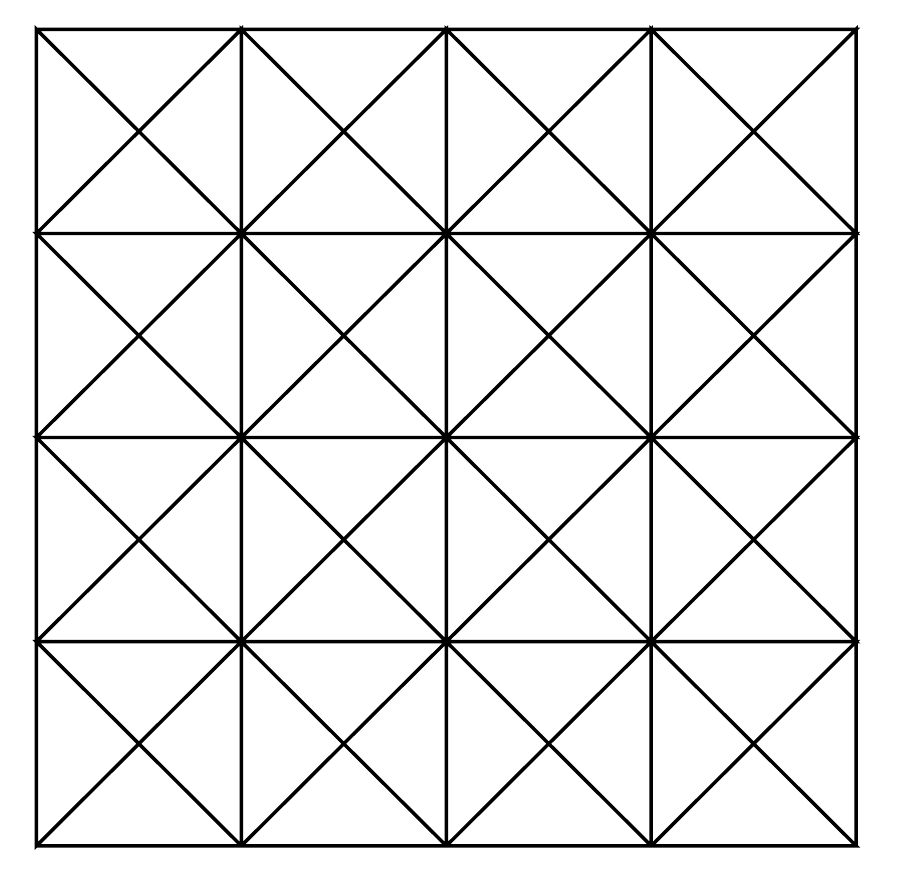}\hspace*{1ex}
\includegraphics[width=0.185\textwidth]{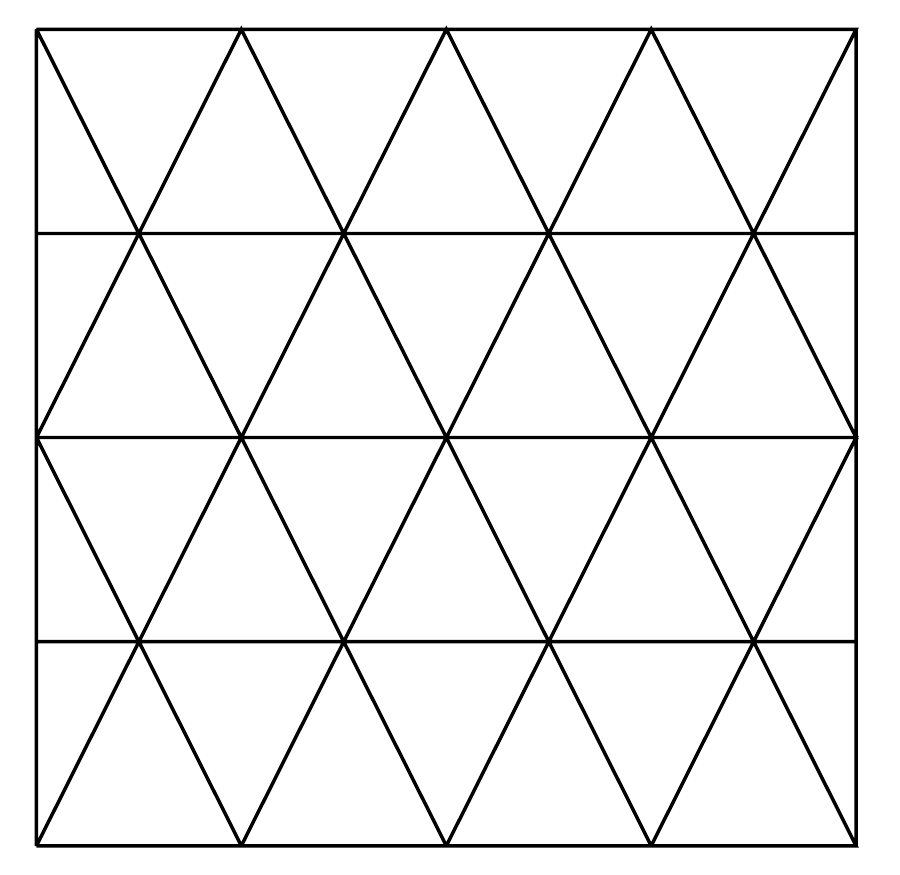}\hspace*{1ex}
\includegraphics[width=0.185\textwidth]{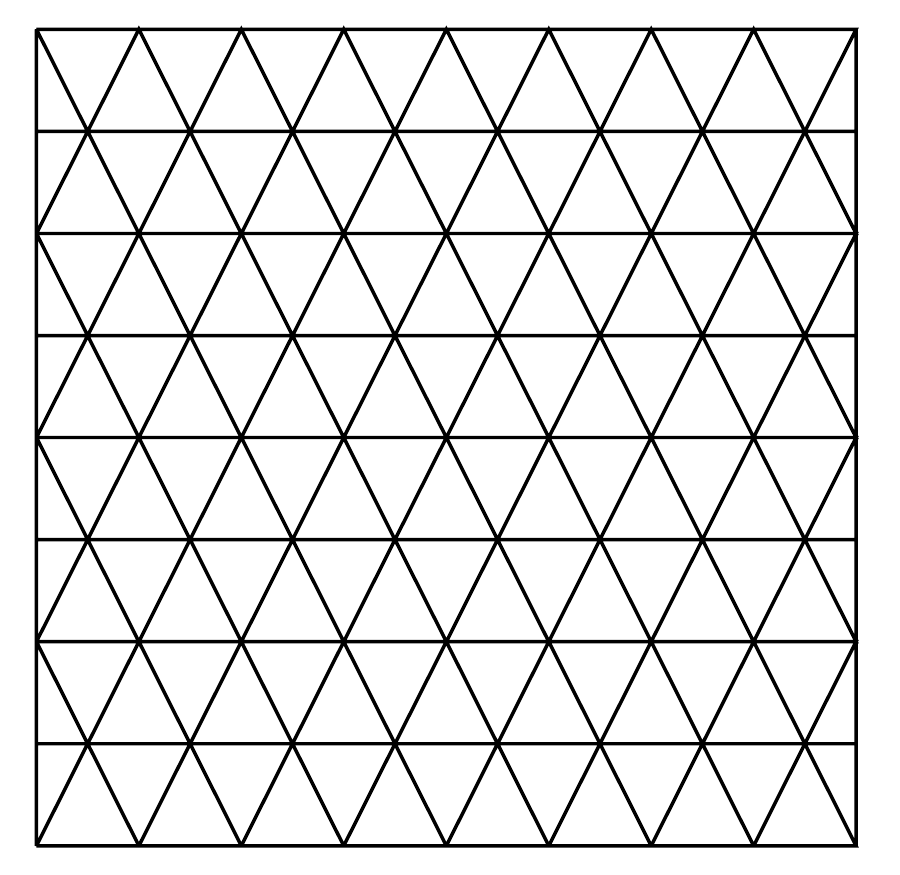}
}
\caption{Grids 6 -- 9 (left to right) and a finer variant of Grid 9 (rightmost).}
\label{fig:grids2}
\end{figure}

To understand why the approximate solutions on Grid~4 do not converge in the 
$H^1$ seminorm, let us have a look at the graphs of some of these solutions.
Fig.~\ref{fig:ex1} (left) shows the solution computed for $ne=32$ and it can 
be seen that the solution is polluted by an oscillating component (for the sake
of clarity, the solution is drawn only along grid lines of Grid~4 which
are parallel to the coordinate axes). This is also
clearly seen from Fig.~\ref{fig:ex1} (right) which shows the wildly oscillating
error $u_h-i_hu$, where $i_h$ is the usual Lagrange interpolation operator. 
The observed structure of the solution remains preserved also on finer meshes. 
Fig.~\ref{fig:ex1_cuts} shows the errors $u_h-i_hu$ along the line
$x=0.25$ on three successive meshes and indicates that the $H^1$ seminorm of 
the error will not change significantly when switching to finer meshes (notice 
the different scales on the vertical axes). It should be mentioned that this 
type of oscillations does not represent a violation of the DMP. The oscillatory
behaviour of the approximate solutions suggests that the accuracy might be
improved by a local averaging. This is indeed possible but the convergence
rates generally still remain suboptimal.

\begin{figure}[t]
\centerline{
\includegraphics[width=0.55\textwidth]{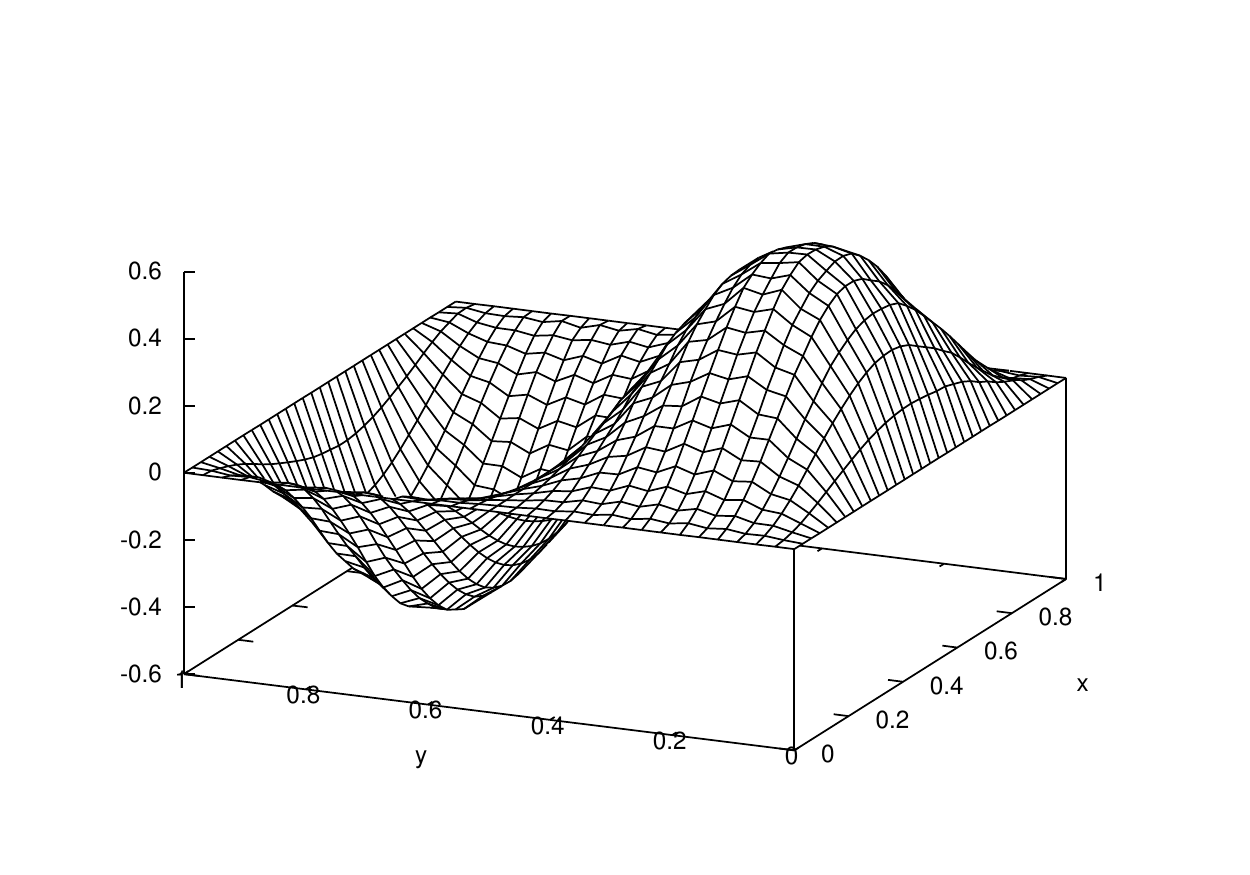}\hspace*{-2ex}
\includegraphics[width=0.55\textwidth]{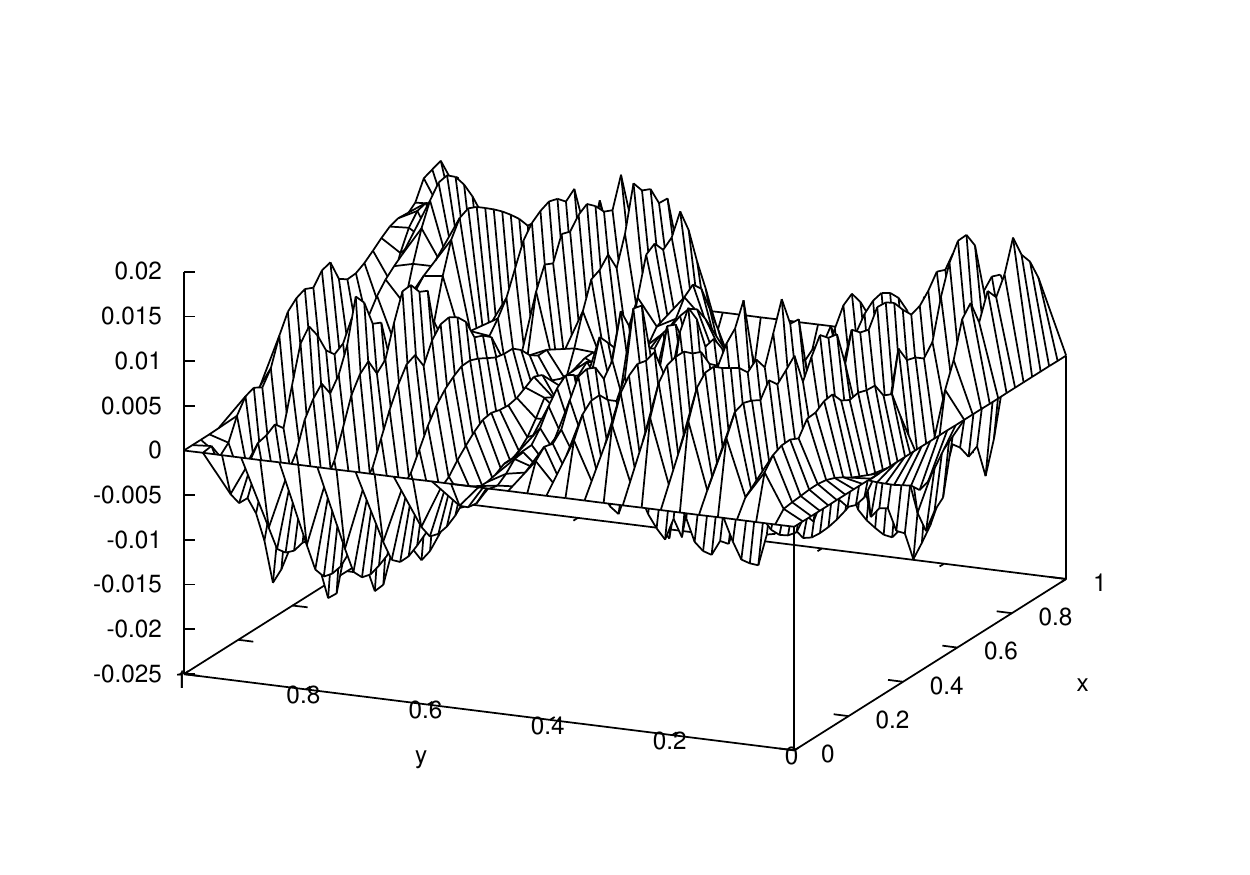}
}
\caption{Example~\ref{ex:smooth}: $\varepsilon=10^{-8}$, approximate solution 
computed using the AFC scheme with the Kuzmin limiter on Grid~4 with $ne=32$ 
(left) and the corresponding error $u_h-i_hu$ (right)}
\label{fig:ex1}
\end{figure}
\begin{figure}[t]
\centerline{
\includegraphics[width=0.32\textwidth]{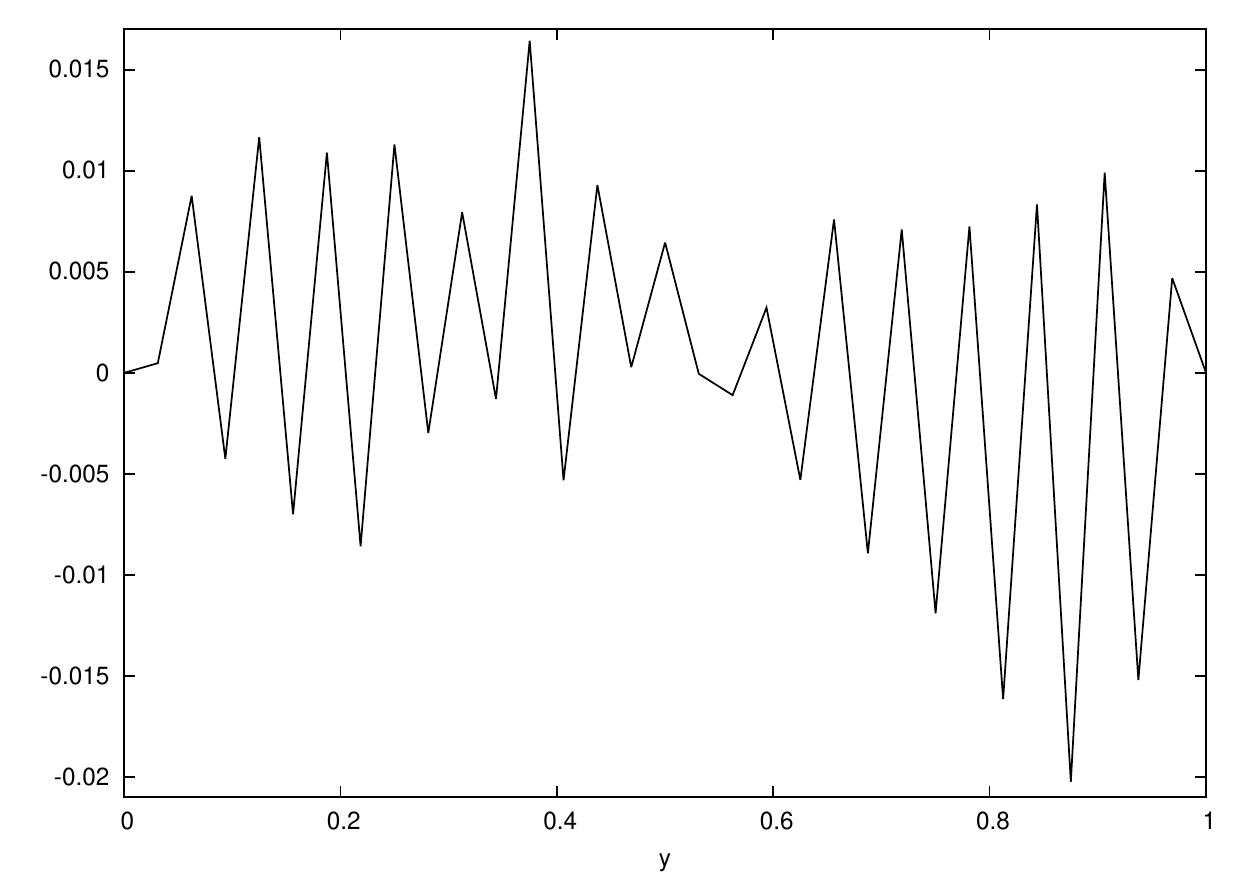}\hspace*{0.5ex}
\includegraphics[width=0.32\textwidth]{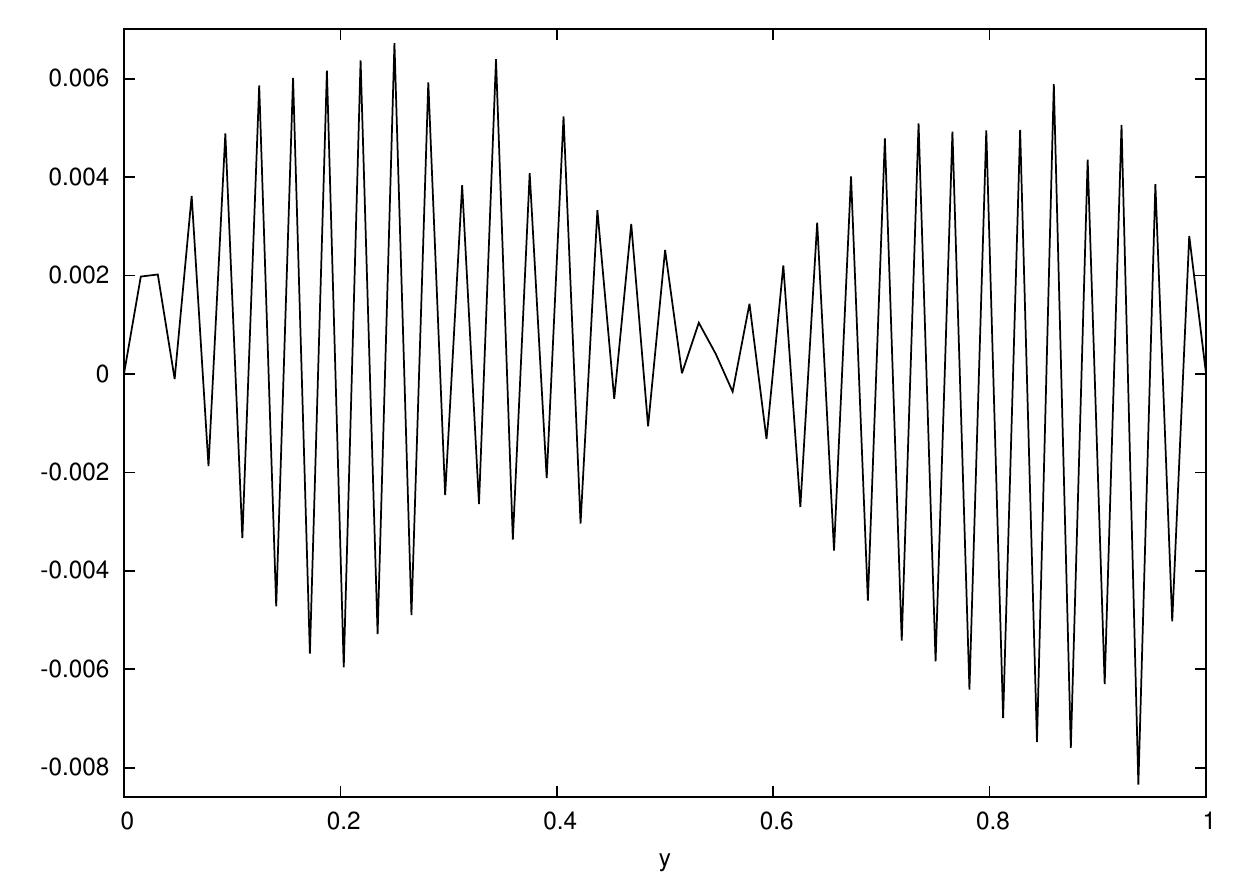}\hspace*{0.5ex}
\includegraphics[width=0.32\textwidth]{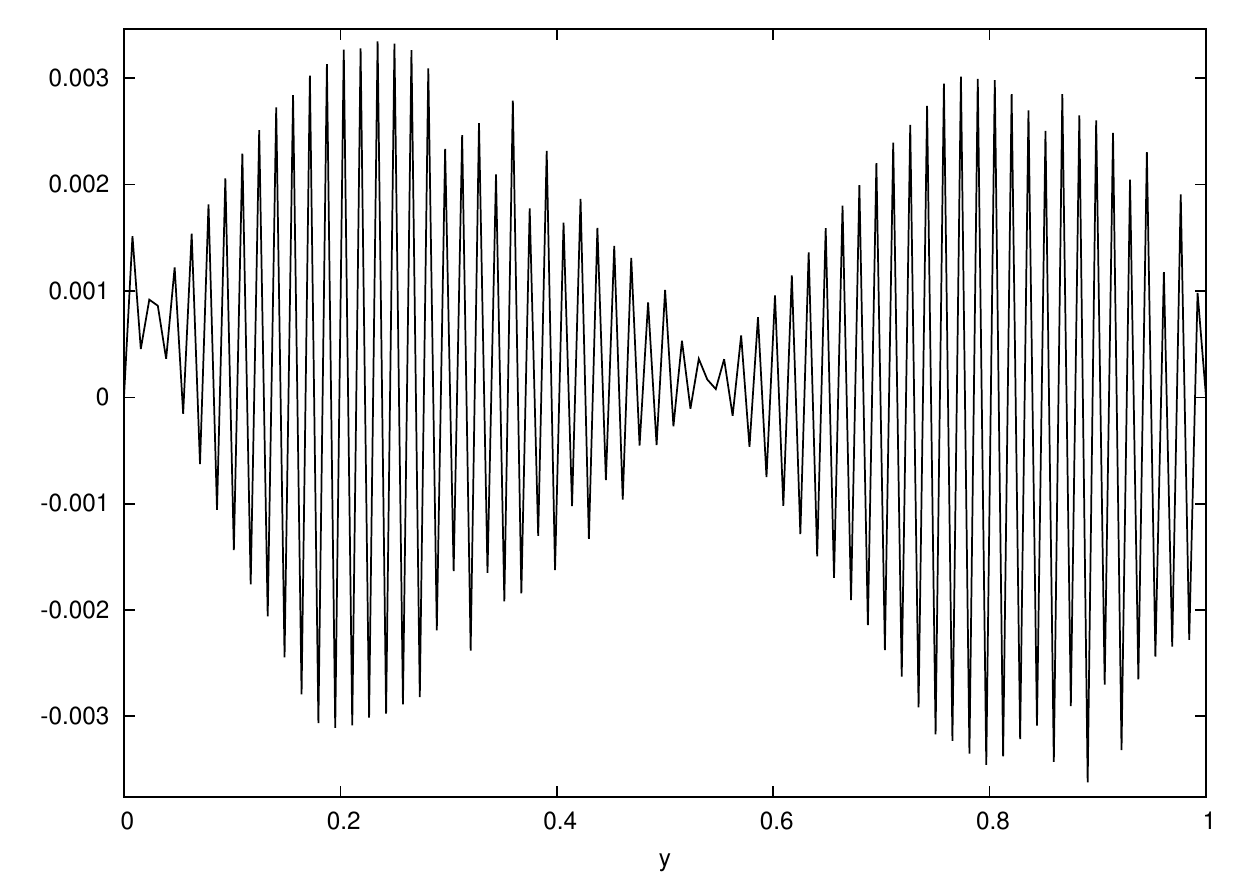}
}
\caption{Example~\ref{ex:smooth}: $\varepsilon=10^{-8}$, errors $u_h-i_hu$ 
along the line $x=0.25$ for approximate solutions computed using the AFC scheme 
with the Kuzmin limiter on Grid~4 with $ne=32$, $ne=64$ and $ne=128$ (left to 
right)}
\label{fig:ex1_cuts}
\end{figure}

Figs.~\ref{fig:ex1} and \ref{fig:ex1_cuts} explain why the $H^1$ seminorm of
the error of the the approximate solution does not tend to zero on Grid~4 and 
now the main question is why the observed oscillations are not suppressed by 
the algebraic stabilization. To answer this question, we shall consider simpler 
examples than Example~\ref{ex:smooth}. We start with the following almost 
trivial case.

\begin{example}\label{ex:linear}
Problem \eqref{strong-steady} is considered with $\Omega = (0,1)^2$, 
$\varepsilon=10^{-8}$, $\bb = (1,0)^T$, $c=0$, $g=1$, and $u_b(x,y)=x$.
\end{example}

Of course, the exact solution of this example is $u(x,y)=x$ and hence the
Galerkin FEM gives the exact solution on any mesh. However, if one applies the 
AFC scheme with the Kuzmin limiter on Grid~4, one obtains the oscillating
solution shown in Fig.~\ref{fig:ex23} (left). Again, the structure of the 
solution is preserved also on finer meshes. Moreover, numerical tests show that 
the size of the oscillations is proportional to $h$ so that one can again 
expect that the $H^1$ seminorm of the error will not tend to zero. This is 
confirmed by the results shown in Table~\ref{s3}.

\begin{figure}[t]
\centerline{
\includegraphics[width=0.55\textwidth]{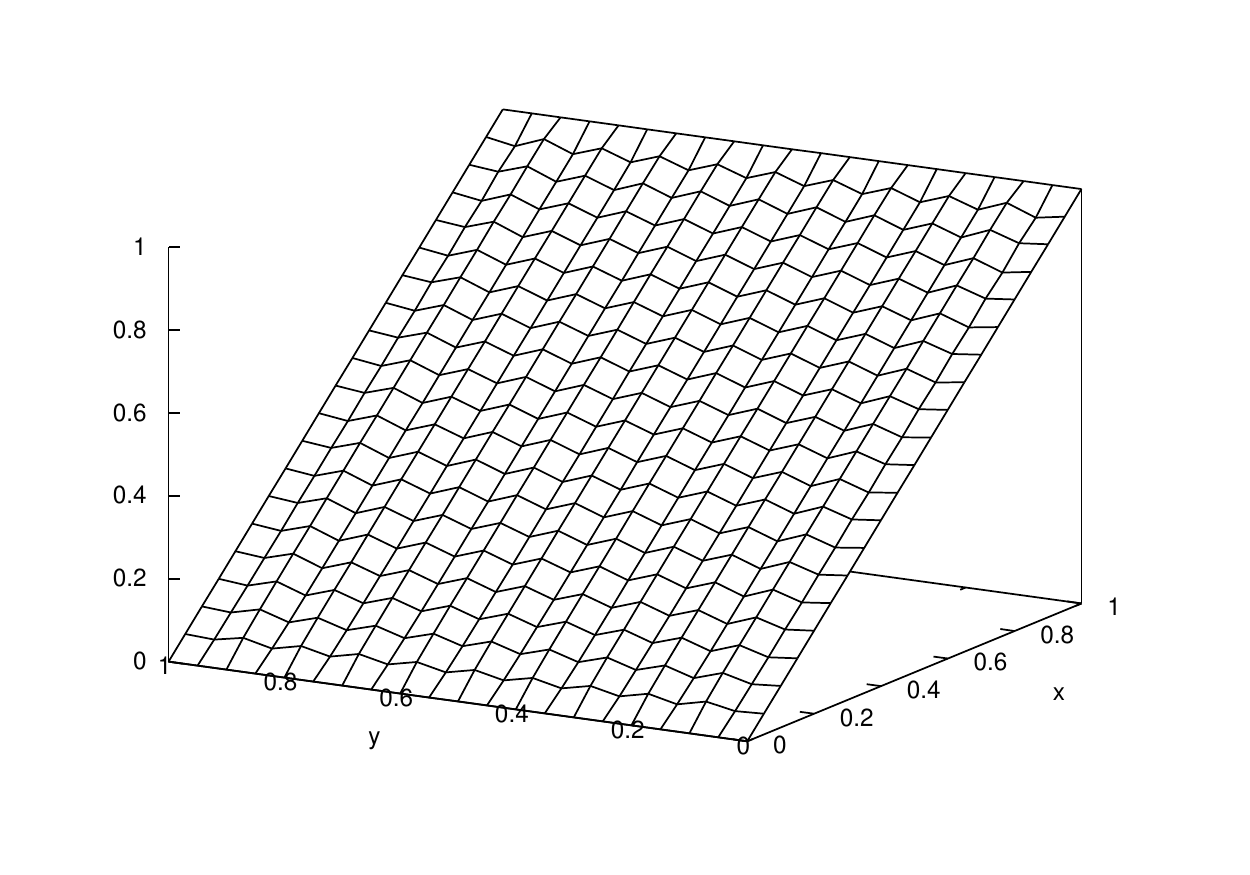}\hspace*{-2ex}
\includegraphics[width=0.55\textwidth]{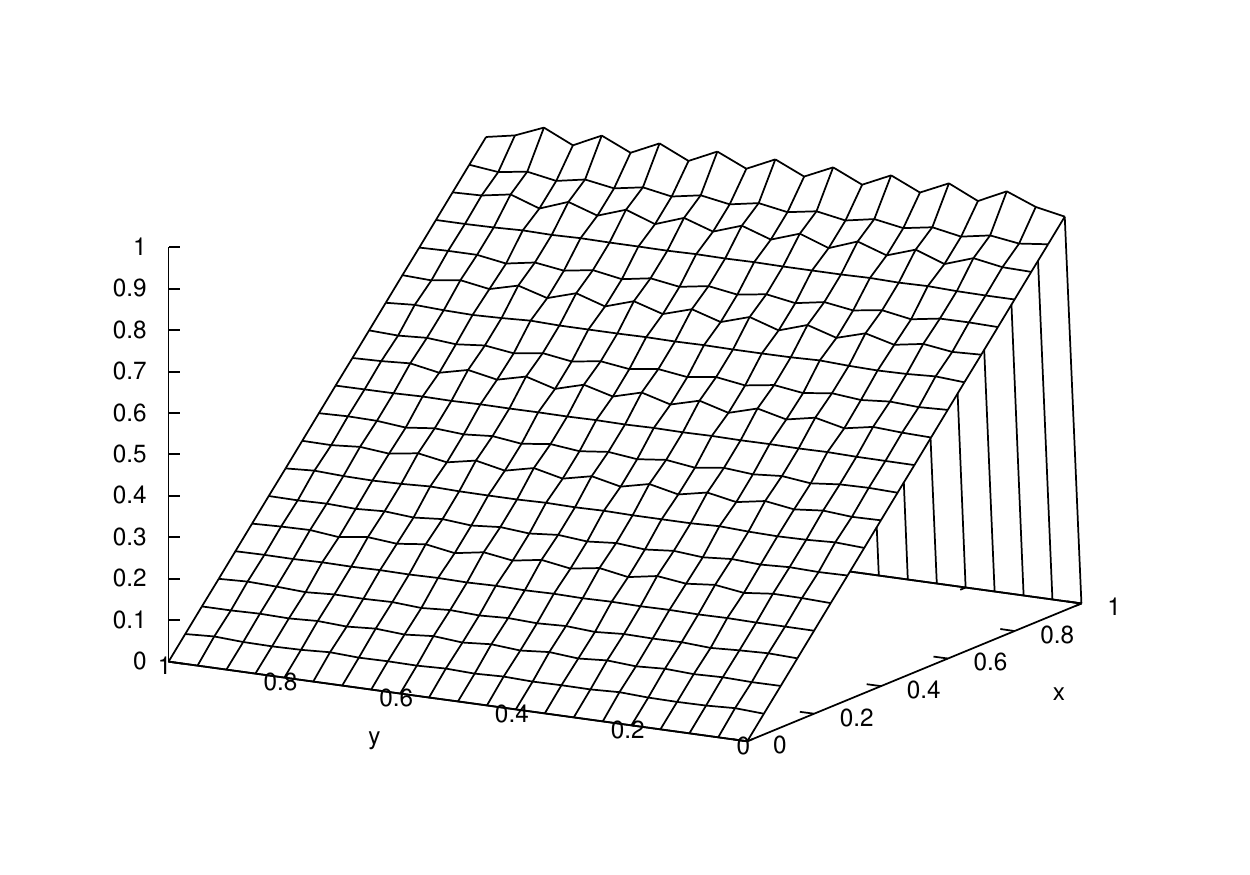}
}
\caption{Approximate solutions computed using the AFC scheme with the Kuzmin 
limiter on Grid~4 with $ne=20$: Example~\ref{ex:linear} (left) and 
Example~\ref{ex:exp_layer} computed with the modification \eqref{eq:mod_rb},
\eqref{eq:mu} (right) 
}
\label{fig:ex23}
\end{figure}

\begin{table}[b]
\begin{center}
\begin{minipage}{274pt}
\caption{Example~\ref{ex:linear}: numerical results for
Grid~4 computed using the AFC scheme with the Kuzmin limiter}
\label{tab3}
\begin{tabular}{@{}rcccccc@{}}
\toprule
$ne$ & $\|u-u_h\|_{0,\Omega}^{}$ & order &   $\vert u-u_h\vert_{1,\Omega}^{}$ &
order & $\|u-u_h\|_h^{}$ & order\\
\midrule
  16 &  8.104e$-$3 &  0.82 & 4.401e$-$1 & -0.20 & 1.179e$-$2 &  0.78\\
  32 &  4.291e$-$3 &  0.92 & 4.700e$-$1 & -0.09 & 6.227e$-$3 &  0.92\\
  64 &  2.204e$-$3 &  0.96 & 4.851e$-$1 & -0.05 & 3.157e$-$3 &  0.98\\
 128 &  1.117e$-$3 &  0.98 & 4.926e$-$1 & -0.02 & 1.580e$-$3 &  1.00\\
 256 &  5.618e$-$4 &  0.99 & 4.963e$-$1 & -0.01 & 7.893e$-$4 &  1.00\\
 512 &  2.817e$-$4 &  1.00 & 4.982e$-$1 & -0.01 & 3.974e$-$4 &  0.99\\
\botrule
\end{tabular}
\end{minipage}
\end{center}
\end{table}

Before we start our analytical investigations of this surprising observation, 
let us have a closer look at Grid~4 and the matrix entries corresponding to 
Example~\ref{ex:linear}. First, note that all the patches $\Delta_i$ defined in 
\eqref{eq:patch} have the same geometry for Grid~4 but they possess two types 
of orientation with respect to the constant convection vector $\bb$. This can 
be seen in Fig.~\ref{fig_grid4}
\begin{figure}[t]
\begin{center}
\setlength{\unitlength}{1cm}
\begin{picture}(2.6,3.6)(-0.3,-0.4)

\thicklines

\put(0,0){\line(1,0){2}} 
\put(0,1){\line(1,0){2}} 
\put(0,2){\line(1,0){2}} 
\put(0,3){\line(1,0){2}} 
\put(0,0){\line(0,1){3}}
\put(1,0){\line(0,1){3}}
\put(2,0){\line(0,1){3}}

\put(0,1){\line(1,-1){1}}
\put(1,1){\line(1,-1){1}}
\put(0,1){\line(1,1){1}}
\put(1,1){\line(1,1){1}}
\put(0,3){\line(1,-1){1}}
\put(1,3){\line(1,-1){1}}

\put(0,1){\circle*{0.1}}
\put(0,2){\circle*{0.1}}
\put(0,3){\circle*{0.1}}
\put(1,0){\circle*{0.1}}
\put(1,1){\circle*{0.1}}
\put(1,2){\circle*{0.1}}
\put(1,3){\circle*{0.1}}
\put(2,0){\circle*{0.1}}
\put(2,1){\circle*{0.1}}
\put(2,2){\circle*{0.1}}

\put(1.18,2.16){\makebox(0,0){$A$}}
\put(0.84,1.16){\makebox(0,0){$B$}}
\put(2.2,2){\makebox(0,0){$C$}}
\put(1,3.2){\makebox(0,0){$D$}}
\put(-0.21,3.05){\makebox(0,0){$E$}}
\put(-0.21,2){\makebox(0,0){$F$}}
\put(-0.21,1){\makebox(0,0){$G$}}
\put(1,-0.21){\makebox(0,0){$H$}}
\put(2.15,-0.05){\makebox(0,0){$I$}}
\put(2.17,1){\makebox(0,0){$J$}}
\end{picture}
\end{center}
\caption{A part of Grid~4.}
\label{fig_grid4}
\end{figure}
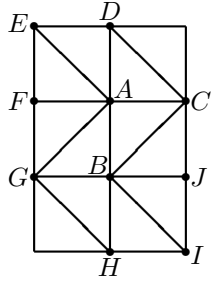
where a part of Grid~4 is shown. One type of orientation of the patches 
is represented by the patch around the node $A$ and the other one by the 
patch around the node $B$. Note that $A$ lies on an even horizontal grid 
line and $B$ on an odd horizontal grid line. 

In the previous sections, we referred to the nodes $x_i$ of the triangulation
through their indices $i$. In the following, it will be more convenient to use 
directly the notation for nodes. Thus, for example, the matrix entry $a_{ij}$
will be denoted by $a_{AB}$ if $x_i=A$ and $x_j=B$. Then the linear system
\eqref{21}, \eqref{21b}, can be written in the form
\begin{equation}\label{eq:lin_syst}
   \sum_{Q\in\II(P)}\,a_{PQ}\,u_Q=\rhs_P\quad\forall\,\,P\in\NN_h^i\,,\qquad
   u_P=u^b_P\quad\forall\,\,P\in\NN_h^b\,,
\end{equation}
where $\NN_h^i$ is the set of interior nodes of $\TT_h^{}$, $\NN_h^b$ is the
set of boundary nodes of $\TT_h^{}$, and $\II(P)\subset\NN_h^i\cup\NN_h^b$ 
consists of the node $P$ and all nodes connected to $P$ by edges of $\TT_h^{}$. 
Note that $\II(A)=\{A,B,C,D,E,F,G\}$ and $\II(B)=\{B,A,G,H,I,J,C\}$ in the case 
depicted in Fig.~\ref{fig_grid4}.

Considering the notation introduced in Fig.~\ref{fig_grid4} and the data of
Example~\ref{ex:linear}, the entries of the 
Galerkin matrix $(a_{ij})_{i,j=1}^N$ defined in \eqref{13} are given by
\begin{alignat*}{3}
&a_{AB}=a_{AD}=a_{HB}=-\varepsilon+\frac{h}6\,,\qquad &&a_{AC}=a_{FA}=a_{BJ}=a_{GB}=-\varepsilon+\frac{h}3\,,\\
&a_{BA}=a_{DA}=a_{BH}=-\varepsilon-\frac{h}6\,,\qquad &&a_{CA}=a_{AF}=a_{JB}=a_{BG}=-\varepsilon-\frac{h}3\,,\\
&a_{EA}=a_{GA}=a_{BC}=a_{BI}=\frac{h}6\,, && a_{AA}=4\,\varepsilon\,,\\
&a_{AE}=a_{AG}=a_{CB}=a_{IB}=-\frac{h}6\,, && a_{BB}=4\,\varepsilon\,,
\end{alignat*}
where $h$ is the mesh width in the directions of the coordinate axes. In our 
analytical considerations, it will be always assumed that
\begin{equation}\label{eq:eps_h}
   \varepsilon<\frac{h}9\,.
\end{equation}
Since the data of Example~\ref{ex:linear} are constant and the triangulation 
is uniform, the matrix entries do not depend on the actual position of the 
nodes $A$ and $B$. The above values of the entries of the Galerkin matrix imply 
that the relations for $P_i^\pm$ from \eqref{46} can be written in the form
\begin{equation}\label{eq:PA_PB}
   P_A^\pm=f_{AB}^\pm+f_{AC}^\pm+f_{AD}^\pm\,,\qquad
   P_B^\pm=f_{BI}^\pm+f_{BJ}^\pm+f_{BC}^\pm\,.
\end{equation}
Finally, note that, under the assumption \eqref{eq:eps_h}, one has
\begin{alignat*}{3}
&d_{AB}=d_{AD}=\varepsilon-\frac{h}6\,,\qquad &&d_{AC}=d_{AF}=\varepsilon-\frac{h}3\,,\qquad && d_{AE}=d_{AG}=-\frac{h}6\,,\\
&d_{BA}=d_{BH}=\varepsilon-\frac{h}6\,,\qquad &&d_{BG}=d_{BJ}=\varepsilon-\frac{h}3\,,\qquad && d_{BI}=d_{BC}=-\frac{h}6\,.
\end{alignat*}

A closer look at Fig.~\ref{fig:ex23} (left) reveals that, in a large part of 
the computational domain, the discrete solution $u_h$ is approximately given by
\begin{alignat}{3}
  &u_h(x,y)=x+\alpha\qquad&&\mbox{along odd horizontal grid lines}\,,
  \label{eq:odd_lines}\\
  &u_h(x,y)=x-\beta\qquad&&\mbox{along even horizontal grid lines}\,,
  \label{eq:even_lines}
\end{alignat}
where $\alpha$ and $\beta$ are positive constants. A direct computation shows
that the nodal values of this function satisfy
\begin{equation}\label{eq:perturbed_Galerkin}
   \sum_{Q\in\II(A)}\,a_{AQ}\,u_Q=h^2-2\,\delta\,\varepsilon\,,\qquad
   \sum_{Q\in\II(B)}\,a_{BQ}\,u_Q=h^2+2\,\delta\,\varepsilon\,,
\end{equation}
where $\delta=\alpha+\beta$. For the data of Example~\ref{ex:linear}, one has 
$\rhs_P=h^2$ in \eqref{eq:lin_syst} and hence one observes that
the function $u_h$ given by \eqref{eq:odd_lines} and \eqref{eq:even_lines} 
satisfies the Galerkin discretization up to the perturbation 
$2\,\delta\,\varepsilon$. This leads us to the surprising conclusion that, for 
the oscillating solution shown in Fig.~\ref{fig:ex23} (left), the AFC 
stabilization term should be nearly zero.

Thus, let us investigate the AFC stabilization term when it is applied to 
a function satisfying \eqref{eq:odd_lines} and \eqref{eq:even_lines}. If we 
consider $\delta\le h$, then
\begin{alignat}{7}
   &f_{AB}&&\le0\,,\quad &f_{AC}&\le0\,,\quad &f_{AD}&\le0\,,\quad
    &f_{AE}&\ge0\,,\quad &f_{AF}&\ge0\,,\quad &f_{AG}&\ge0\,,
   \label{eq:signs_of_fluxes_A}\\
   &f_{BI}&&\le0\,,\quad &f_{BJ}&\le0\,,\quad &f_{BC}&\le0\,,\quad 
    &f_{BA}&\ge0\,,\quad &f_{BG}&\ge0\,,\quad &f_{BH}&\ge0\,,
   \label{eq:signs_of_fluxes_B}
\end{alignat}
which together with \eqref{eq:PA_PB} implies that $P_A^+=P_B^+=0$ and hence
$R_A^+=R_B^+=1$. Furthermore, it follows from \eqref{46}, 
\eqref{eq:signs_of_fluxes_A}, and \eqref{eq:signs_of_fluxes_B} that
\begin{equation}\label{eq:QA_QB}
   Q_A^-=-f_{AE}^+-f_{AF}^+-f_{AG}^+\,,\qquad
   Q_B^-=-f_{BA}^+-f_{BG}^+-f_{BH}^+\,.
\end{equation}
Then a direct computation gives
\begin{align*}
   &P_A^-=Q_B^-=\varepsilon\,(h+2\,\delta)-\frac{h}3\,(h+\delta)\,,\\
   &Q_A^-=P_B^-=\varepsilon\,h-\frac{h}3\,(2\,h-\delta)\,.
\end{align*}
Moreover, setting
\begin{equation}\label{eq:delta}
   \delta=\frac{h}2\,\frac{h}{h-3\,\varepsilon}\,,
\end{equation}
one obtains $P_A^-=Q_A^-$ and $P_B^-=Q_B^-$, which implies that $R_A^-=R_B^-=1$.
Since the nodes $A$ and $B$ were chosen arbitrarily, one observes that, for
any function $u_h$ given by \eqref{eq:odd_lines} and \eqref{eq:even_lines} with
$\alpha+\beta$ equal to $\delta$ from \eqref{eq:delta}, the AFC stabilization
term vanishes. This shows that our conjecture was correct.

The fact that the AFC scheme with the Kuzmin limiter does not reproduce the
exact solution $u(x,y)=x$ implies that the method is not linearity preserving
or not uniquely solvable. The following lemma shows that the former possibility
holds true.

\begin{lemma}\label{lemma:lin_pres}
Let $u\in P_1(\RR^2)$ be an arbitrary first degree polynomial and let us 
consider the arrangement from Fig.~\ref{fig_grid4} and the above matrix entries
corresponding to Example~\ref{ex:linear}. Then the quantities from \eqref{46}
computed using the nodal values of $u$ satisfy
\begin{equation*}
   P_A^+\le Q_A^+\,,\qquad
   P_A^-\ge Q_A^-
\end{equation*}
and
\begin{equation*}
   P_B^+\le\frac{2\,h-3\,\varepsilon}{h-3\,\varepsilon}\,Q_B^+\,,\qquad
   P_B^-\ge\frac{2\,h-3\,\varepsilon}{h-3\,\varepsilon}\,Q_B^-\,,
\end{equation*}
where the latter inequalities are sharp. Consequently, the AFC scheme with the
Kuzmin limiter is not linearity preserving on Grid~4 when applied to
Example~\ref{ex:linear}.
\end{lemma}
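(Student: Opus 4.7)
My plan is to reduce the statement to an explicit two-parameter calculation. Parameterize $u\in P_1(\RR^2)$ as $u(x,y)=a_0+a_1x+a_2y$ and set $\xi=a_1h$, $\eta=a_2h$ (the constant $a_0$ is irrelevant, since $Q_Q^{\pm}$ and $P_P^{\pm}$ only see nodal differences). Reading the coordinates off Fig.~\ref{fig_grid4}, every relevant nodal difference $u_Q-u_P$ becomes an explicit linear expression in $\xi,\eta$, and combining with the tabulated $d_{ij}$ yields closed-form expressions for the six raw fluxes $f_{AX}$ and the six raw fluxes $f_{BX}$ as linear combinations of $\xi,\eta$ with coefficients depending only on $h,\varepsilon$ (and with all off-diagonal $d_{ij}$ negative under the standing assumption \eqref{eq:eps_h}).

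I would then verify the four inequalities by a short case split on the signs of $\xi$, $\eta$, and $\xi-\eta$, these being the only breakpoints that can switch a flux sign. The symmetry $u\mapsto -u$ sends $P_i^{\pm}\mapsto -P_i^{\mp}$ and $Q_i^{\pm}\mapsto -Q_i^{\mp}$, so the two $-$ inequalities reduce to the corresponding $+$ inequalities applied to $-u$; only the $+$ statements need to be checked case by case. For node $A$, $P_A^{+}$ uses only the three fluxes appearing in \eqref{eq:PA_PB}, and in each sign case $Q_A^{+}-P_A^{+}$ reorganizes into a manifestly nonnegative combination of $\xi^{\pm},\eta^{\pm},(\xi-\eta)^{\pm}$, giving the bare inequality $P_A^{+}\le Q_A^{+}$. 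For node $B$ the analogous reorganization produces an imbalance in the $\xi$-coefficient that is absorbed exactly by the algebraic identity
\begin{equation*}
   k\,(h/3-\varepsilon)=\frac{2h-3\varepsilon}{h-3\varepsilon}\,(h/3-\varepsilon)=\frac{2h}{3}-\varepsilon,
\end{equation*}
which forces the optimal constant to be $k$ rather than $1$ and which is what makes the $B$-side of the lemma quantitatively different from the $A$-side.

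Both sharpness and the failure of linearity preservation follow from a single witness, namely $u(x,y)=x$, i.e.\ $\xi=h$, $\eta=0$. A direct substitution into the formulae for $f_{BC},f_{BI},f_{BJ}$ and $f_{BG}$ gives $P_B^{-}=-(2h/3-\varepsilon)\,h$ and $Q_B^{-}=-(h/3-\varepsilon)\,h$, so $P_B^{-}=k\,Q_B^{-}$ exactly (sharpness) and \eqref{R-definition} yields $R_B^{-}=1/k<1$. The fluxes $f_{BC},f_{BI},f_{BJ}$ are all strictly negative for this $u$, so \eqref{alpha-definition} gives $\widetilde\alpha_{Bj}=R_B^{-}<1$ for each $j\in\{C,I,J\}$. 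Since $a_{jB}<a_{Bj}$ for each of these three neighbours, the symmetrization \eqref{symm_alpha} keeps $\alpha_{Bj}=\widetilde\alpha_{Bj}<1$; combined with $d_{Bj}\ne 0$ this makes $b_{Bj}(u)=(1-\alpha_{Bj})\,d_{Bj}\ne 0$ in \eqref{afc-bij}, so $\mathbb B(u)\ne 0$ and linearity preservation fails.

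The main obstacle is bookkeeping rather than ideas: there are nominally eight sign cases from $\xi$, $\eta$, $\xi-\eta$, halved to four by the $u\mapsto -u$ symmetry, and in each remaining case the six fluxes at $B$ have to be partitioned into positive and negative parts and the resulting $\xi,\eta$-coefficients recombined. The role of the constant $k$ becomes transparent only after the identity $k(h/3-\varepsilon)=2h/3-\varepsilon$ is pulled out of the algebra, and identifying the sharpness witness as the exact solution $u=x$ of Example~\ref{ex:linear} is what ties the loss of linearity preservation directly to the oscillatory discrete solution observed in Fig.~\ref{fig:ex23}~(left).
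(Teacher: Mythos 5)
Your proposal is correct and follows essentially the same route as the paper: both arguments reduce the claim to a direct, finite case check of the explicit fluxes (the paper normalizes $u_A=0$ and rescales to a one-parameter family using the axisymmetry of the patch, while you keep two parameters $\xi,\eta$ and split on signs), and your sharpness witness $u=x$ coincides, up to an additive constant and a positive scaling, with the paper's witnesses $u_A=u_B=0$, $u_G=\pm1$, your values $P_B^-=-(2h/3-\varepsilon)h$ and $Q_B^-=-(h/3-\varepsilon)h$ being correct. One small correction to the bookkeeping: $\xi+\eta=0$ is also a breakpoint (the fluxes $f_{AG}$ and $f_{BC}$ are proportional to $\xi+\eta$), so the sign of $\xi+\eta$ must be included in the case split alongside $\xi$, $\eta$ and $\xi-\eta$; with that added, the verification goes through as you describe.
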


\begin{proof}First consider the inequalities at the node $A$. Since the values 
of $P_A^\pm$ and $Q_A^\pm$ do not change if a constant function is added to
$u$, one can consider $u_A=0$. Moreover, the ratios $Q_A^+/P_A^+$ and
$Q_A^-/P_A^-$ do not change if $u$ is multiplied by a positive constant. Thus,
it suffices to consider three types of functions $u$: with $u_C=-u_F=1$,
$u_C=-u_F=-1$, and $u_C=u_F=0$. These functions are then determined by the
value $u_B$ and it is sufficient to consider $u_B\ge0$ in view of the 
axisymmetry of the patch $\Delta_A$. Then it is straightforward to verify that 
the inequalities at the node $A$ hold.

The inequalities at the node $B$ can be verified analogously. Equalities hold
for $u$ given by $u_A=u_B=0$, $u_G=1$ and $u_A=u_B=0$, $u_G=-1$, respectively.
For these functions, one gets 
$({\mathbb B}({\rm U}){\rm U})_B=h^2/(6h-9\varepsilon)$ and
$({\mathbb B}({\rm U}){\rm U})_B=-h^2/(6h-9\varepsilon)$, respectively, which
means that the considered method is not linearity preserving.
\end{proof}

In view of the previous lemma, it is not surprising that the exact solution of
Example~\ref{ex:linear} is not recovered by the AFC scheme with the Kuzmin
limiter on Grid~4. Nevertheless, it is rather disappointing that, for this very 
simple example,  the $H^1$ seminorm of the error cannot be reduced by 
considering finer meshes.

On the other hand, we also see from Lemma~\ref{lemma:lin_pres} that it is easy 
to modify the AFC scheme with the Kuzmin limiter in such a way that the method 
becomes linearity preserving for the considered case. In fact, similarly as in
\eqref{R-definition-BJK}, it suffices to replace $R_B^\pm$ by
\begin{equation}\label{eq:mod_rb}
   R_B^+=\min\left\{1,\frac{\mu\,Q_B^+}{P_B^+}\right\},\quad
   R_B^-=\min\left\{1,\frac{\mu\,Q_B^-}{P_B^-}\right\},
\end{equation}
with an appropriate positive constant $\mu$. It can be easily verified that
this does not change other properties of the method formulated so far. To 
simplify our analytical considerations, we shall use 
\begin{equation}\label{eq:mu}
   \mu=\frac{2\,h-3\,\varepsilon}{h-4\,\varepsilon}\,,
\end{equation}
which is a slightly larger value than suggested by Lemma~\ref{lemma:lin_pres}.
Nevertheless, for values of $h$ and $\varepsilon$ considered in our numerical
computations, this modification is negligible.

If one now repeats the computation leading to the result in Fig.~\ref{fig:ex23}
(left) with $R_i^\pm$ given by \eqref{eq:mod_rb}, \eqref{eq:mu} at nodes lying 
on odd horizontal grid lines, one obtains the exact solution $u_h(x,y)=x$. This 
is not surprising since this exact solution solves the Galerkin discretization 
and the AFC stabilization term  now vanishes for first degree polynomials. 
Thus, let us consider the following slightly more difficult example.

\begin{example}\label{ex:exp_layer}
Problem \eqref{strong-steady} is considered with $\Omega = (0,1)^2$, 
$\varepsilon=10^{-8}$, $\bb = (1,0)^T$, $c=0$, $g=1$, and
\begin{equation}\label{eq:bc_exp_layer}
   u_b(x,y)=x-\frac{{\mathrm e}^{\frac{x}\varepsilon}-1}
                   {{\mathrm e}^{\frac{1}\varepsilon}-1}\,.
\end{equation}
\end{example}

The formula in \eqref{eq:bc_exp_layer} not only defines the boundary condition
but it also represents the solution $u=u(x,y)$ of Example~\ref{ex:exp_layer}. 
In most of $\Omega$, $u(x,y)$ is very close to $x$, only in the vicinity of the 
outflow boundary $x=1$ it abruptly falls to $0$ and exhibits an exponential 
boundary layer. The approximate solution obtained on Grid~4 using the AFC 
scheme with the Kuzmin limiter modified by \eqref{eq:mod_rb}, \eqref{eq:mu} is
depicted in Fig.~\ref{fig:ex23} (right) and one can observe that it is again
rather poor. The character of the solution remains the same
also on finer meshes where one can observe that, in a large part of the 
computational domain, the discrete solution $u_h$ is approximately given by
three parameters. For example, for $ne=80$, one can deduce the following form
of the discrete solution:
\begin{alignat}{3}
  &u_h(x,y)=\left\{
        \begin{array}{ll}
                x+\alpha\quad&\mbox{if}\,\,\,x=(3\,k-1)\,h\,,\\
                x+\beta&\mbox{otherwise}\,,
        \end{array}\right.\quad&&\mbox{along odd horizontal grid lines}\,,
  \label{eq:odd_lines2}\\
  &u_h(x,y)=\left\{
        \begin{array}{ll}
                x+\beta\quad&\mbox{if}\,\,\,x=(3\,k+1)\,h\,,\\
                x-\gamma&\mbox{otherwise}\,,
        \end{array}\right.\quad&&\mbox{along even horizontal grid lines}\,,
  \label{eq:even_lines2}
\end{alignat}
where $k$ is an arbitrary integer and $\alpha$, $\beta$, $\gamma$ are 
positive constants. 

Let us now again investigate when a function $u_h$ given by 
\eqref{eq:odd_lines2}, \eqref{eq:even_lines2} satisfies the Galerkin 
discretization or the AFC scheme. Due to the definition of $u_h$, one has to 
distinguish six cases: whether the node under consideration lies on an odd or 
an even horizontal grid line and whether the respective vertical grid line is 
expressed by $x=(3\,k-1)\,h$, $x=3\,k\,h$, or $x=(3\,k+1)\,h$. Like in
\eqref{eq:perturbed_Galerkin}, one derives in all six cases that, under the
condition
\begin{equation}\label{eq:abc}
   \alpha=2\,\beta+\gamma\,,
\end{equation}
$u_h$ satisfies the Galerkin discretization up to a perturbation
$\kappa\,(\beta+\gamma)\,\varepsilon$ with $\kappa\in\{-5,-3,-1,1,2,6\}$. Since
the computed discrete solutions approximately satisfy \eqref{eq:abc}, we again 
expect that the AFC stabilization term is nearly zero for them.

To compute the AFC stabilization term for $u_h$ given by \eqref{eq:odd_lines2},
\eqref{eq:even_lines2}, we shall assume that, apart from \eqref{eq:eps_h} and
\eqref{eq:abc}, one also has
\begin{equation}\label{eq:bc}
   \beta+\gamma\le\frac{h}4\,.
\end{equation}
Then it is easy to verify that, in all cases, the fluxes again have the signs
given in \eqref{eq:signs_of_fluxes_A} and \eqref{eq:signs_of_fluxes_B}. Thus,
one again immediately obtains that $R_A^+=R_B^+=1$ and the relations
\eqref{eq:QA_QB} hold. Using \eqref{eq:PA_PB}, \eqref{eq:QA_QB}, 
\eqref{eq:mod_rb}, and \eqref{eq:mu}, a lengthy but straightforward computation
reveals that, in all three cases, $R_A^-=R_B^-=1$, which means that the AFC
stabilization term again vanishes. Note that the condition \eqref{eq:bc} allows
more flexibility in defining the function $u_h$ than \eqref{eq:delta} which
determines the respective $u_h$ uniquely up to an additive constant.

There are two important conclusions of the above discussion. The first one, a
more general, is that approximate solutions may be polluted by spurious
oscillations despite the validity of the DMP. This may happen also if the
right-hand side $g$ vanishes (in contrast to the above examples), see
Example~\ref{ex:bilinear} below. The second conclusion is that there are
oscillating functions (which may solve, e.g., a Galerkin discretization) for
which the algebraic stabilization term vanishes. This is a surprising 
observation that does not correspond to the usual experience that, in case of
an oscillating solution, a stabilization introduces an artificial diffusion in 
the discrete problem to suppress the oscillations. However, it is worth noting
that if a discretization of Example~\ref{ex:linear} on some of the Grids 1,
4--7, and 9 leads to an oscillating approximate solution of the type 
\eqref{eq:odd_lines}, \eqref{eq:even_lines}, then also residual-based 
stabilizations (see, e.g., \cite{RST08}) are not able to suppress the 
oscillations since the residual vanishes on any element of the triangulation.

Let us mention that if Example~\ref{ex:exp_layer} is solved on Grid~1 using the
AFC scheme with the Kuzmin limiter, one obtains the nodally exact solution,
except for the rightmost vertical interior grid line, see Fig.~\ref{fig:ex34}
\begin{figure}[t]
\centerline{
\includegraphics[width=0.27\textwidth]{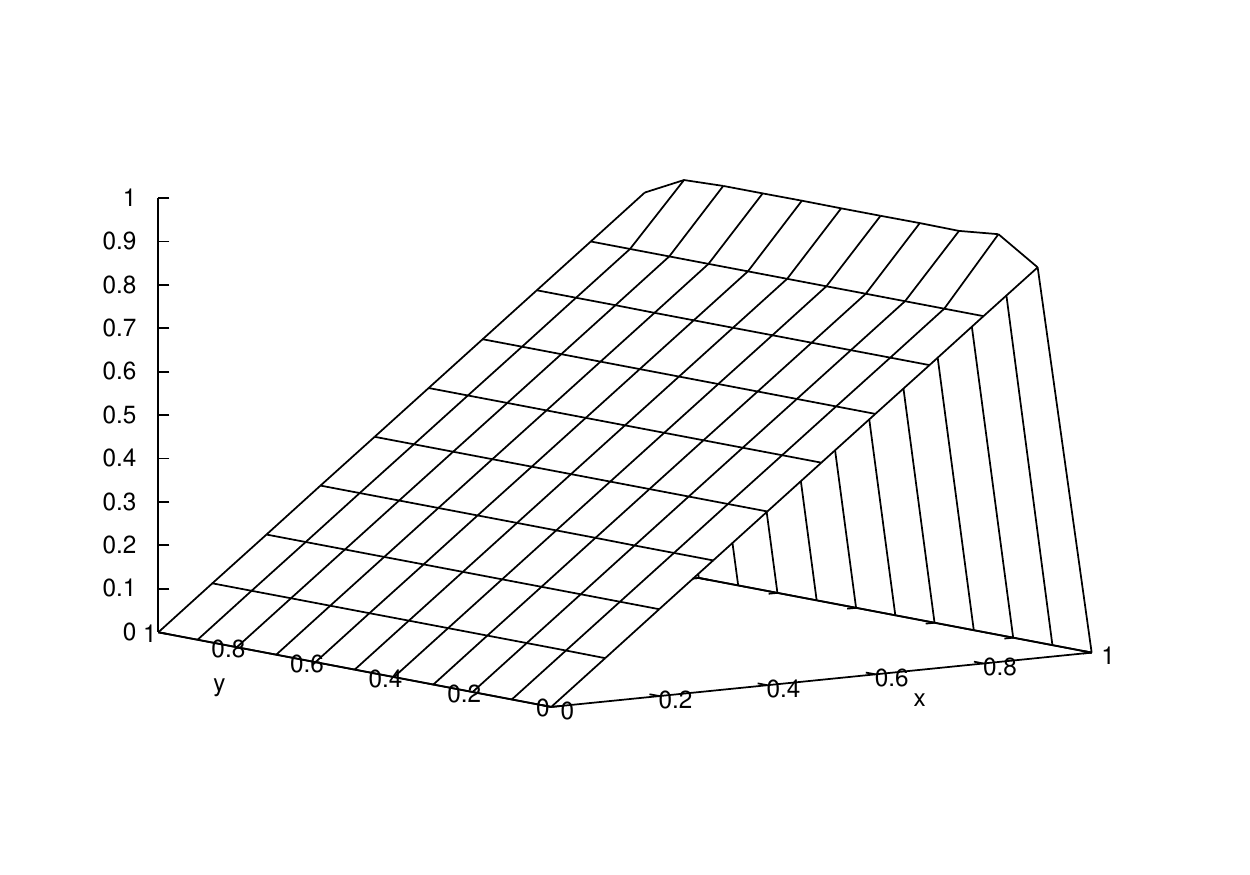}
\hspace*{-3ex}
\includegraphics[width=0.27\textwidth]{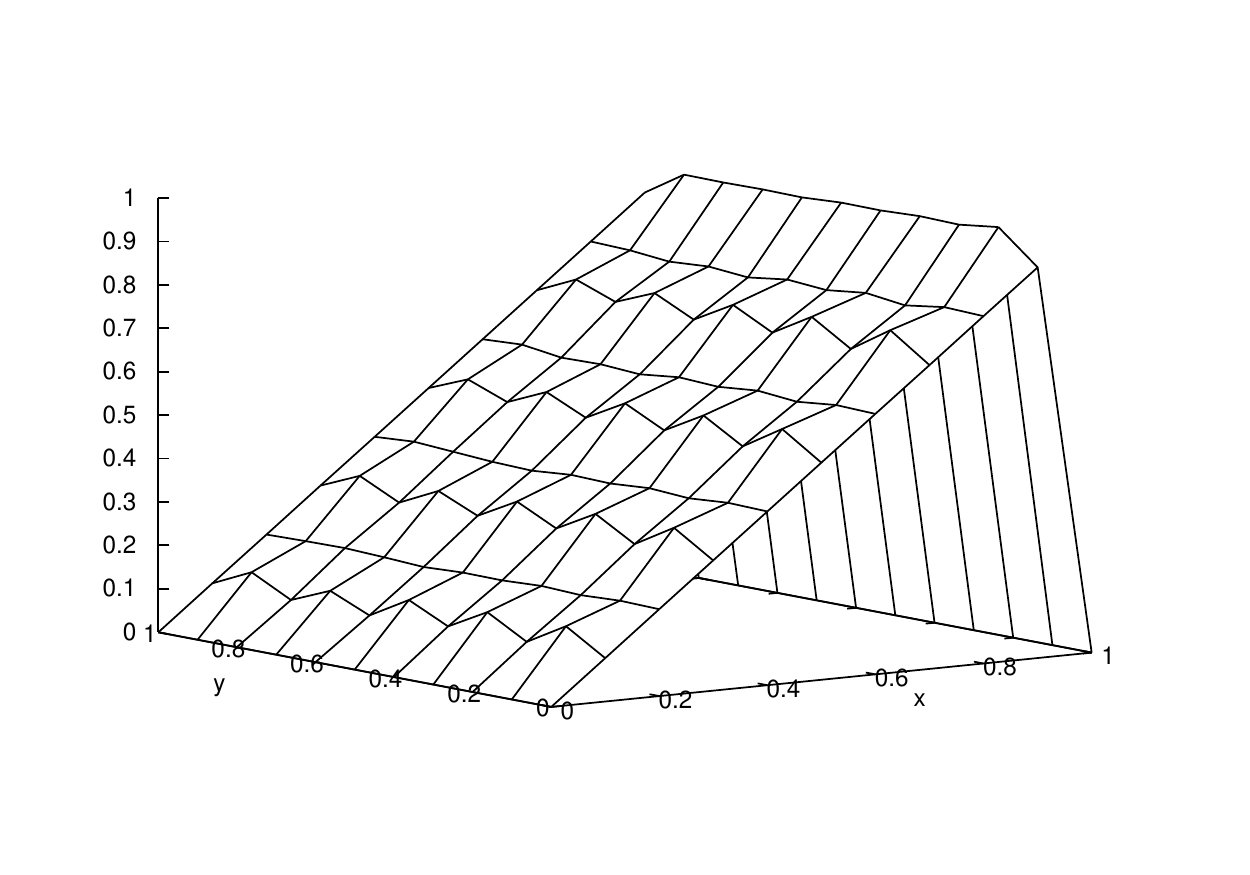}
\hspace*{-3ex}
\includegraphics[width=0.27\textwidth]{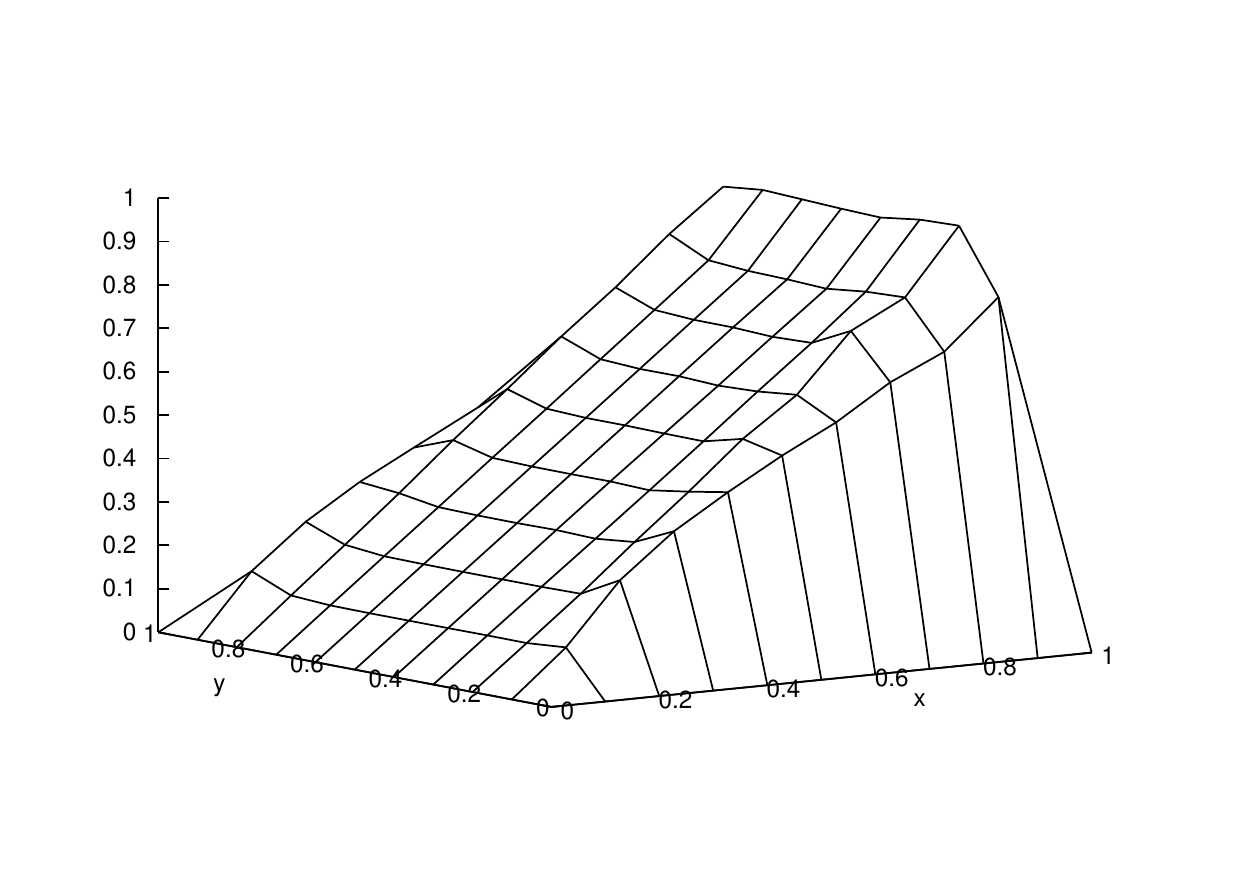}
\hspace*{-3ex}
\includegraphics[width=0.27\textwidth]{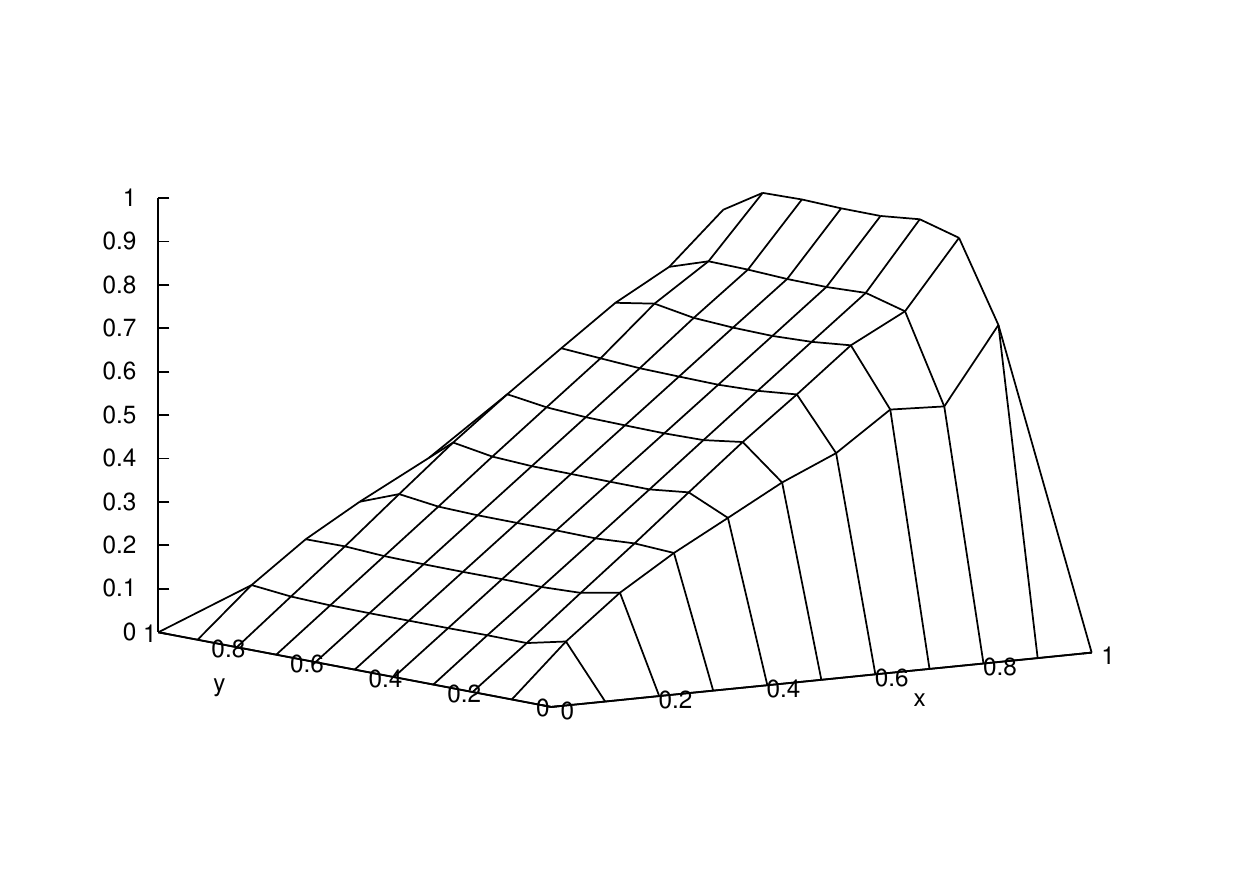}
}
\caption{Approximate solutions computed using the AFC scheme with the Kuzmin 
limiter on Grid~1 with $ne=10$: Example~\ref{ex:exp_layer} (left),
Example~\ref{ex:exp_layer}, $R_i^\pm$ defined by \eqref{R-definition-BJK} with
$\mu_i=2$ (second from left), Example~\ref{ex:exp_par_layers} (second from
right), Example~\ref{ex:exp_par_layers}, $P_i^\pm$ defined by \eqref{eq:BJK_p} 
(right)}
\label{fig:ex34}
\end{figure}
(left). However, if $R_i^\pm$ are defined by \eqref{R-definition-BJK} with
$\mu_i=2$, then oscillations again appear, see Fig.~\ref{fig:ex34} (second from
left). Since the AFC scheme with the Kuzmin limiter is linearity preserving on
Grid~1 for constant data, this shows that the symmetry of the patches and the 
linearity preservation are not sufficient for obtaining an accurate approximate 
solution.  The error at the rightmost vertical interior grid line appears 
independently of the choice of the limiter as it was proved in \cite{Kno15b} so 
that a refinement of the mesh along the outflow boundary is needed for 
enhancing the accuracy.

Let us now change the boundary condition of Example~\ref{ex:exp_layer} to the
homogeneous one, i.e., consider

\begin{example}\label{ex:exp_par_layers}
Problem \eqref{strong-steady} is considered with $\Omega = (0,1)^2$, 
$\varepsilon=10^{-8}$, $\bb = (1,0)^T$, $c=0$, $g=1$, and $u_b=0$.
\end{example}

\noindent
The solution of this example possesses not only an exponential boundary layer 
at the outflow boundary but also two parabolic boundary layers. The AFC scheme
with the Kuzmin limiter on Grid~1 provides the approximate solution shown in 
Fig.~\ref{fig:ex34} (second from right). One can observe that, in the region of
the numerical parabolic boundary layers, the approximate solution is not
monotone in the crosswind direction. This can be improved by defining $P_i^\pm$
by \eqref{eq:BJK_p} instead of \eqref{46}, see Fig.~\ref{fig:ex34} (right). In
general, this modification decreases $R_i^\pm$ so that more artificial
diffusion is introduced, which may lead to a more pronounced smearing of
layers. Then the accuracy can be again enhanced by using a finer mesh in the
boundary layer region.

Let us mention that, for the finite element functions given by
\eqref{eq:odd_lines}, \eqref{eq:even_lines} or \eqref{eq:odd_lines2}, 
\eqref{eq:even_lines2} and for the matrix entries corresponding to Grid~4 and
the data of Example~\ref{ex:linear}, the values of the Kuzmin limiter are
determined only by the quantities $R_i^-$. Since it follows from 
\eqref{eq:signs_of_fluxes_A}, \eqref{eq:signs_of_fluxes_B} that the quantities 
$P_i^-$ attain the same values for both definitions \eqref{46} and 
\eqref{eq:BJK_p}, the above analytical results remain the same also if 
$P_i^\pm$ are defined by \eqref{eq:BJK_p}. Also the result in 
Fig.~\ref{fig:ex34} (left) is not affected by computing $P_i^\pm$ using
\eqref{eq:BJK_p}.

Fig.~\ref{fig:ex4_BJK} shows results for Example~\ref{ex:exp_par_layers} 
\begin{figure}[t]
\centerline{
\includegraphics[width=0.27\textwidth]{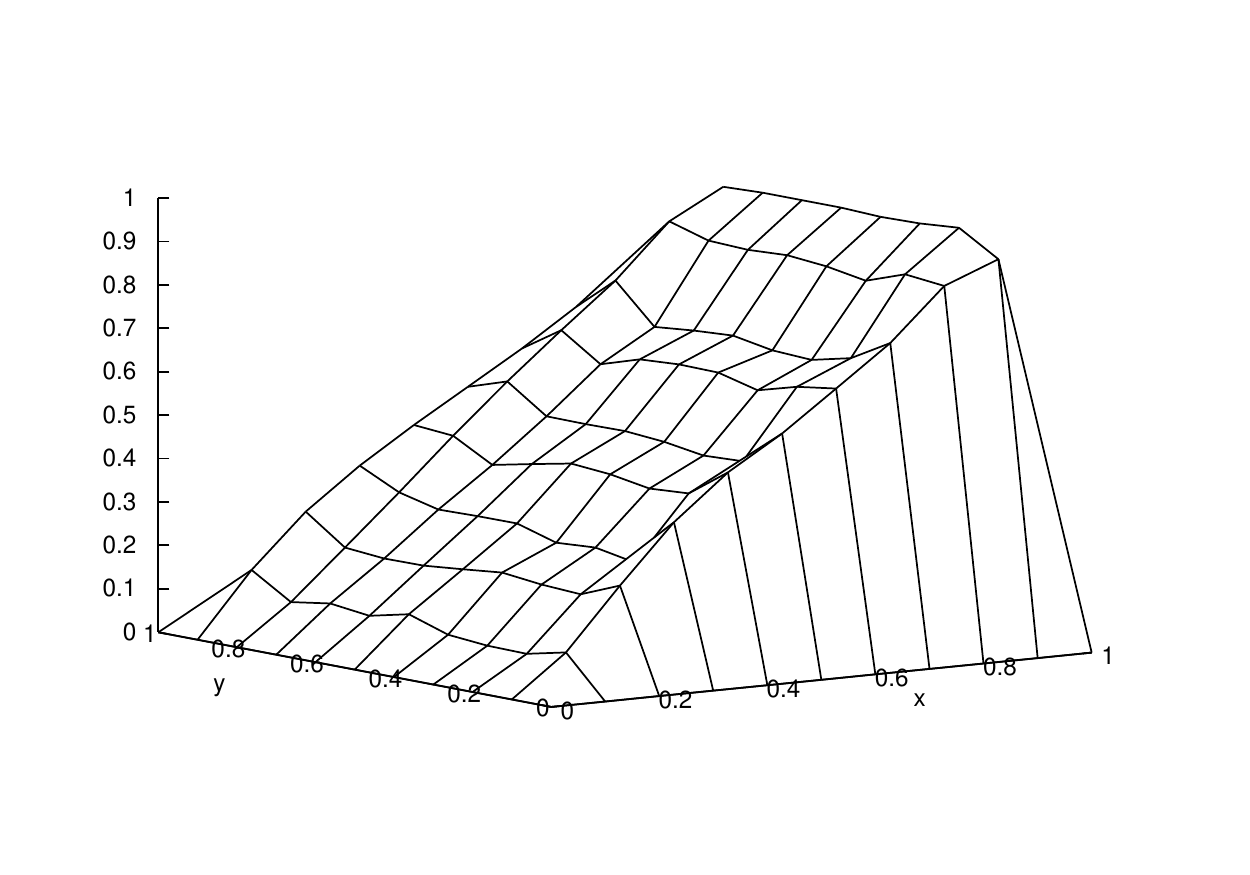}
\hspace*{2ex}
\includegraphics[width=0.27\textwidth]{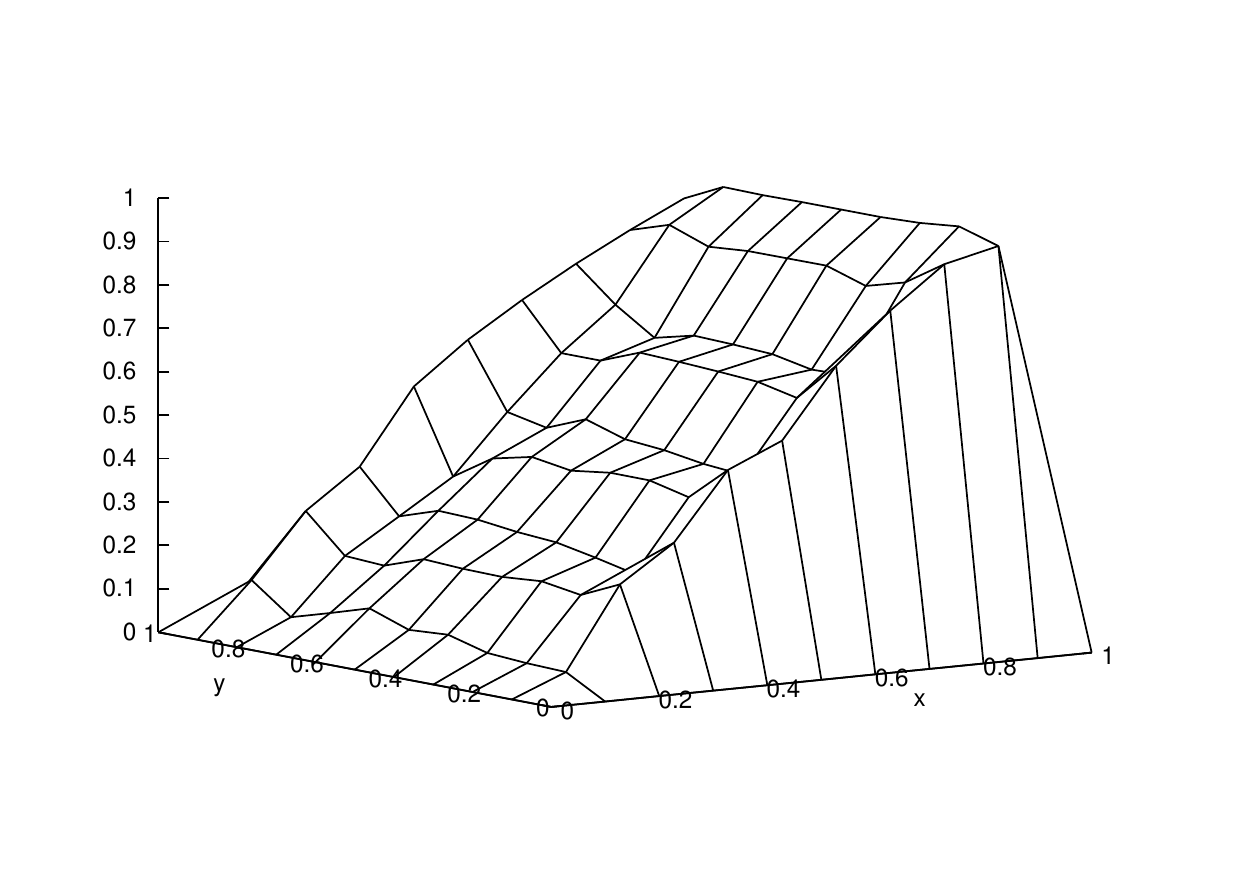}
}
\caption{Example~\ref{ex:exp_par_layers}: approximate solutions computed using 
the AFC scheme with the BJK limiter on Grid~1 with $ne=10$ for $\mu_i=1$
(left) and $\mu_i=2$ (right)}
\label{fig:ex4_BJK}
\end{figure}
computed using the AFC scheme with the BJK limiter on Grid~1. As we know from
Section~\ref{s42}, one can consider $\mu_i=1$ in \eqref{R-definition-BJK} for 
Grid~1 to guarantee the linearity preservation, which leads to the oscillatory 
solution in Fig.~\ref{fig:ex4_BJK} (left). If one uses $\mu_i=2$ as suggested 
by the formula \eqref{eq:mu_i}, the oscillations become even larger, see 
Fig.~\ref{fig:ex4_BJK} (right). This again
demonstrates that the symmetry of the patches and the linearity preservation 
are not sufficient for obtaining an accurate approximate solution. Moreover,
the results presented in Figs.~\ref{fig:ex23}, \ref{fig:ex34}, and 
\ref{fig:ex4_BJK} show that using the modification \eqref{R-definition-BJK}
(with $\mu_i>1$) of \eqref{R-definition} (e.g., to enforce the linearity 
preservation or to reduce the amount of artificial diffusion) is not a good 
idea since it allows more oscillatory solutions. In fact, this is not 
surprising since, for any finite element function, for which the quantities 
$R_i^\pm$ do not vanish, one can find $\mu_i$ such that the AFC stabilization 
term vanishes. 

In particular, one should avoid such constructions of limiters for which
$R_i^+=R_i^-=1$ may occur for oscillating functions. If $P_i^\pm$ are defined by
\eqref{eq:BJK_p} and $Q_i^\pm$ by \eqref{46}, as suggested in the discussion to 
Fig.~\ref{fig:ex34}, then $P_i^++Q_i^-=P_i^-+Q_i^+=0$ and it is easy to verify
that $R_i^+=R_i^-=1$ is equivalent to $P_i^+=Q_i^+$ and $P_i^-=Q_i^-$, i.e., to
\begin{equation*}
   0=\sum_{j\in S_i}\,(f_{ij}^++f_{ij}^-)=\sum_{j\in S_i}\,f_{ij}
    =\sum_{j\in S_i}\,d_{ij}\,(u_j-u_i)\,.
\end{equation*}
Thus, $R_i^+=R_i^-=1$ holds if and only if $u_i=\bar{u}_i$ where $\bar{u}_i$ is
a local average defined by
\begin{equation}\label{eq:average}
    \bar{u}_i=\frac{\sum_{j\in S_i}\vert d_{ij}\vert\,u_j}
    {\sum_{j\in S_i}\vert d_{ij}\vert}\,.
\end{equation}
To avoid oscillating approximate solutions, the local averages $\bar{u}_i$
should be good approximations of the values $u_i$ for smoothly varying
functions, which is the case for locally symmetric meshes like Grid~1 but not
meshes with unsymmetric patches like Grid~4. This probably contributes to the
better performance of the AFC scheme with the Kuzmin limiter on locally
symmetric meshes.

The above examples have all non-vanishing right-hand sides $g$ so that the DMP
provides only one-sided local bounds on approximate solutions. To demonstrate 
that the above-discussed phenomena are not restricted to this case, let us 
consider the following example with a vanishing right-hand side.

\begin{example}\label{ex:bilinear}
Problem \eqref{strong-steady} is considered with $\Omega = (0,1)^2$, 
$\varepsilon=10^{-8}$, $\bb = (-2,-3)^T$, $g=0$, and
\begin{equation*}
   c(x,y)=\frac{3\,x+2\,y+7}{(x+1)(y+2)}\,,\qquad
   u_b(x,y)=(x+1)(y+2)\,.
\end{equation*}
\end{example}

Note that the solution of Example~\ref{ex:bilinear} is $u(x,y)=(x+1)(y+2)$.
Whereas, on Grid~1, the AFC scheme with the Kuzmin limiter leads to an
accurate approximation, the results on Grid~4 are again polluted by spurious
oscillations, see Fig.~\ref{fig:ex5}.
\begin{figure}[t]
\centerline{
\includegraphics[width=0.55\textwidth]{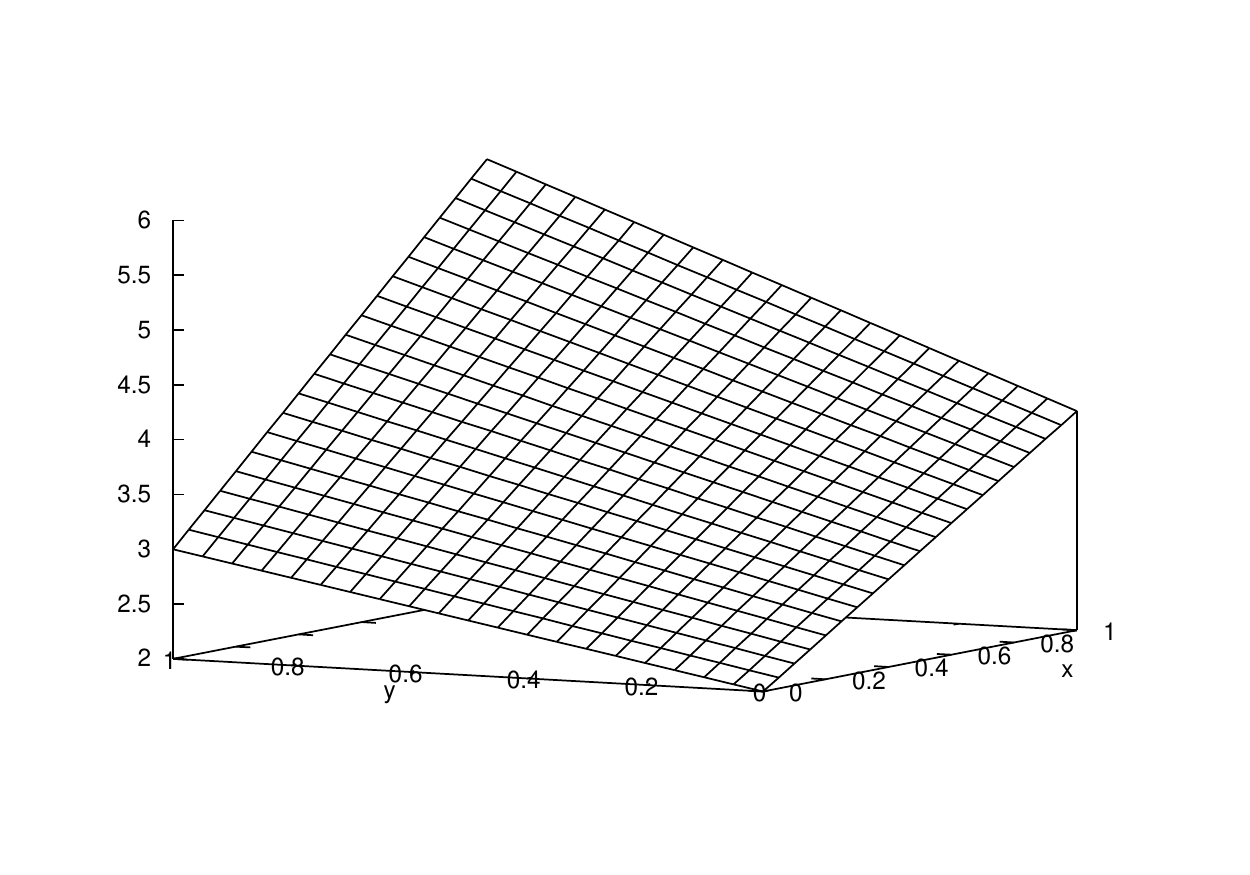}
\hspace*{-2ex}
\includegraphics[width=0.55\textwidth]{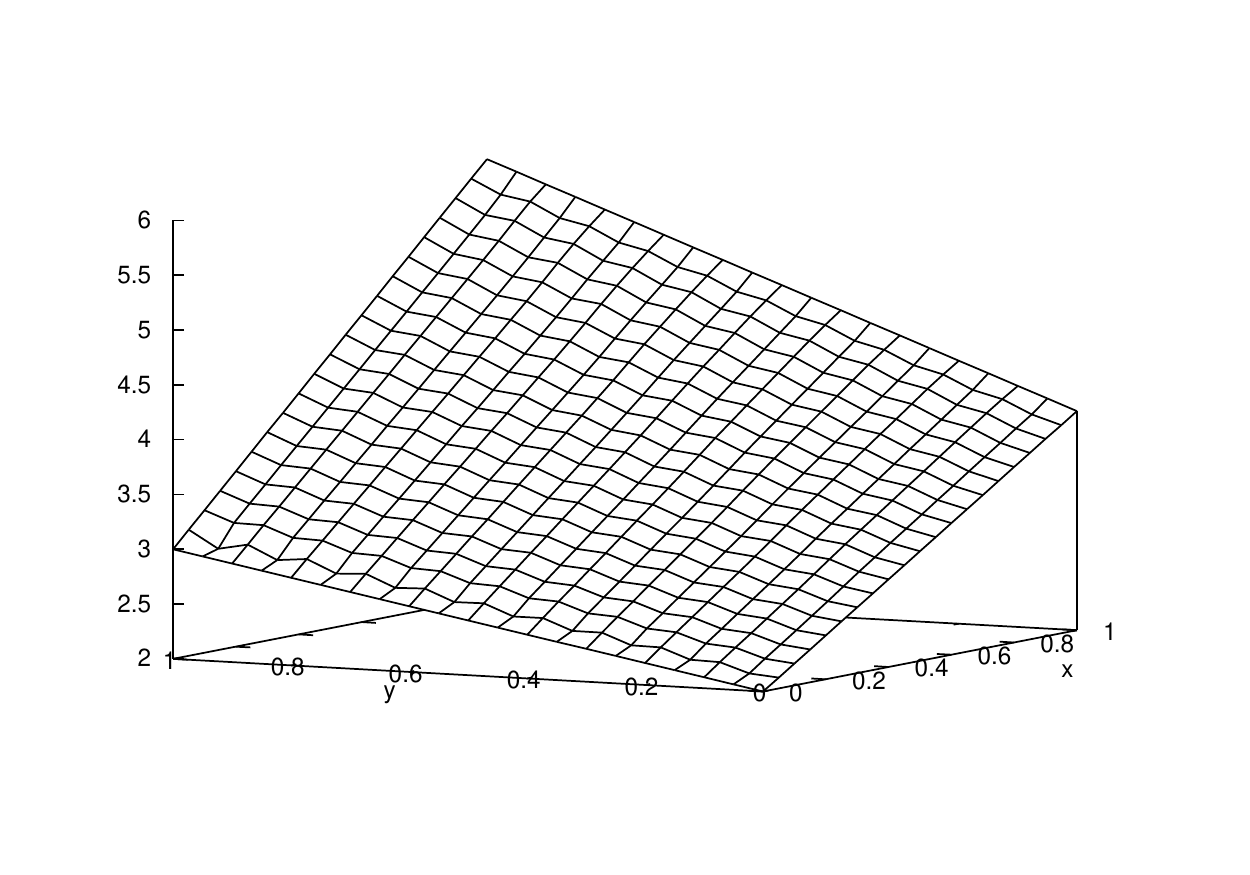}
}
\caption{Example~\ref{ex:bilinear}: approximate solutions computed using the 
AFC scheme with the Kuzmin limiter on Grid~1 (left) and on Grid~4 (right), in
both cases with $ne=20$}
\label{fig:ex5}
\end{figure}
Moreover, on Grid~1, one can observe the same optimal convergence rates as in
Table~\ref{tab1} whereas an analogous reduction of the convergence rates as in
Table~\ref{tab2} is observed on Grid~4.

\section{Symmetrized monotone upwind-type algebraically stabilized method}
\label{s6}

The aim of this section is to design an algebraic stabilization which will not
suffer from the deficiencies discussed and analyzed in the previous section. 
The starting point will be the MUAS method of Section~\ref{s43} since this
method has the favourable property being of upwind type and satisfies the DMP
on arbitrary simplicial meshes.

It was argued below Example~\ref{ex:exp_par_layers} in the previous section 
that $P_i^\pm$ should be defined by \eqref{eq:BJK_p} instead of \eqref{46}. 
Consequently, the relations \eqref{def-beta_ij_1} in the MUAS method should be 
changed to
\begin{equation}\label{51}
   P_i^+=\sum_{j\in S_i}\,\vert d_{ij}\vert\,(u_i-u_j)^+\,,\qquad\quad
   P_i^-=\sum_{j\in S_i}\,\vert d_{ij}\vert\,(u_i-u_j)^-\,.
\end{equation}
Moreover, it was observed in the previous section that two properties seem to 
be important for obtaining accurate results using algebraic stabilizations: 
local symmetries of triangulations and the linearity preservation. As it was 
demonstrated that the linearity preservation should not be enforced using 
\eqref{R-definition-BJK}, our goal will be to get this property by symmetrizing 
the definitions of $P_i^\pm$ and $Q_i^\pm$ in a suitable way.

To introduce the mentioned symmetry, we will extend the definitions of
$P_i^\pm$ and $Q_i^\pm$ by considering values at symmetrically placed points. 
\begin{figure}[t]
\begin{center}
\setlength{\unitlength}{1cm}
\begin{picture}(4.7,4.3)(-0.35,-0.1)
\thinlines

\drawline(2,2)(0,0)
\drawline(2,2)(-0.7,3.8)
\dashline{0.1}[0.001](2,2)(1.5,3)
\dashline{0.1}[0.001](2,2)(0,2)

\thicklines

\put(2,2){\circle*{0.1}}
\put(4,2){\circle*{0.1}}
\put(3,4){\circle*{0.1}}
\put(0.5,3){\circle*{0.1}}
\put(1,1){\circle*{0.1}}
\put(2.5,1){\circle*{0.1}}
\put(1.5,3){\circle*{0.1}}
\put(0,2){\circle*{0.1}}
\drawline(4,2)(3,4)(0.5,3)(1,1)(2.5,1)(4,2)
\drawline(4,2)(2,2)(3,4)
\drawline(0.5,3)(2,2)(1,1)
\drawline(2,2)(2.5,1)

\put(2.23,2.19){\makebox(0,0){$A$}}
\put(2.66,0.84){\makebox(0,0){$B$}}
\put(4.24,2){\makebox(0,0){$C$}}
\put(3.05,4.23){\makebox(0,0){$D$}}
\put(0.55,3.25){\makebox(0,0){$E$}}
\put(1.07,0.77){\makebox(0,0){$F$}}
\put(-0.23,2.05){\makebox(0,0){$\tilde C$}}
\put(1.69,3.22){\makebox(0,0){$\tilde B$}}
\end{picture}
\end{center}
\caption{Construction of symmetrically placed points}
\label{fig_symm}
\end{figure}
The construction is illustrated by Fig.~\ref{fig_symm} where a patch $\Delta_A$
around a node $A$ is shown. Each node $P$ connected to $A$ by an edge is mapped 
to a point $\tilde P$ in a symmetric way with respect to $A$ and then the idea 
is to compute the value $u_{AP}$ at $\tilde P$ of the finite element function 
$u_h$ corresponding to ${\rm U}$ via \eqref{eq_vect_fcn_identification}.
This is easy in case of the node $B$ from Fig.~\ref{fig_symm} since 
$\tilde B\in\Delta_A$. However, in case of the node $C$, the symmetrically
placed point $\tilde C$ lies outside $\Delta_A$. In this case, we extend the
linear function $u_h$ from the triangle $AEF$ to the convex set surrounded by 
the half lines $AE$ and $AF$ and define $u_{AC}$ as the value of this extended
function at $\tilde C$. This makes more sense than considering the actual value 
$u_h(\tilde C)$. The value $u_{AC}$ can be easily computed using the gradient
of $u_h$ on $AEF$ since
\begin{equation*}
   u_{AC}=u_A+\nabla u_h\vert_{AEF}^{}\cdot(\tilde C-A)
         =u_A+\nabla u_h\vert_{AEF}^{}\cdot(A-C)\,.
\end{equation*}
Of course, an analogous relation holds for $u_{AB}$, too.

Using the above-defined values $u_{AP}$, we can now symmetrize the definitions
of the quantities $P_i^\pm$ and $Q_i^\pm$ in \eqref{51} and
\eqref{def-beta_ij_2}, respectively, for any $i\in\{1,\dots,M\}$ by setting
\begin{align}
  P_i^+&=\sum_{j\in S_i}\,\vert d_{ij}\vert\,\{(u_i-u_j)^++(u_i-u_{ij})^+\}\,,
  \label{eq:mod_muas_pp}\\
  P_i^-&=\sum_{j\in S_i}\,\vert d_{ij}\vert\,\{(u_i-u_j)^-+(u_i-u_{ij})^-\}\,,
  \label{eq:mod_muas_pm}\\
  Q_i^+&=\sum_{j\in S_i}\,s_{ij}\,\{(u_j-u_i)^++(u_{ij}-u_i)^+\}\,,
  \label{eq:mod_muas_qp}\\
  Q_i^-&=\sum_{j\in S_i}\,s_{ij}\,\{(u_j-u_i)^-+(u_{ij}-u_i)^-\}\,,
  \label{eq:mod_muas_qm}
\end{align}
where
\begin{equation}\label{eq:uij}
   u_{ij}=u_i+\nabla u_h\vert_{T_{ij}}^{}\cdot(x_i-x_j)\qquad\forall\,\,j\in 
   S_i\,,
\end{equation}
and $T_{ij}\subset\Delta_i$ is a simplex intersected by the half line 
$\{x_i+\alpha\,(x_i-x_j)\,;\,\,\alpha>0\}$ (like the triangle $AEF$ in
Fig.~\ref{fig_symm} for $x_i=A$ and $x_j=C$). As we will see below, this
modification of the MUAS method leads to optimal convergence rates in cases
where the algebraic stabilizations of Section~\ref{s4} provide suboptimal
convergence results.

\pagebreak 

The above definitions of $P_i^\pm$ and $Q_i^\pm$ can be generalized to
\begin{align}
  P_i^+&=\sum_{j\in S_i,\,a_{ij}>0\,\vee\,a_{ji}>0}\,
        p_{ij}\,\{(u_i-u_j)^++(u_i-u_{ij})^+\}\,,\label{eq:smuas_pp}\\
  P_i^-&=\sum_{j\in S_i,\,a_{ij}>0\,\vee\,a_{ji}>0}\,
        p_{ij}\,\{(u_i-u_j)^-+(u_i-u_{ij})^-\}\,,\label{eq:smuas_pm}\\
  Q_i^+&=\sum_{j\in S_i}\,q_{ij}\,\{(u_j-u_i)^++(u_{ij}-u_i)^+\}\,,
                                                 \label{eq:smuas_qp}\\
  Q_i^-&=\sum_{j\in S_i}\,q_{ij}\,\{(u_j-u_i)^-+(u_{ij}-u_i)^-\}\,,
                                                 \label{eq:smuas_qm}
\end{align}
with some weighting factors satisfying, for any $j\in S_i$, $i=1,\dots,M$,
\begin{align}
   &0\le p_{ij}\le q_{ij}\,,
   \label{eq:smuas_weights}\\
   &p_{ij}>0\quad\mbox{if}\,\,\,\,a_{ij}>0\,.\label{eq:smuas_pij}
\end{align}
We name the resulting scheme Symmetrized Monotone Upwind-type Algebraically 
Stabilized (SMUAS) method. Let us recall that the stabilization matrix of the
SMUAS method is given by \eqref{asm-bij}, \eqref{asm-bii} with $\beta_{ij}$
determined by \eqref{eq:betaij}, \eqref{R-definition}, \eqref{eq:R-Dir}, and
\eqref{eq:smuas_pp}--\eqref{eq:smuas_qm} where $u_{ij}$ are defined by
\eqref{eq:uij} and $p_{ij}$, $q_{ij}$ satisfy \eqref{eq:smuas_weights},
\eqref{eq:smuas_pij}.

\begin{remark}
If $P_i^+=0$, then $R_i^+$ can be defined arbitrarily (and the same holds for
$P_i^-$ and $R_i^-$). Indeed, $P_i^+$ is used only for defining $\beta_{ij}$
with $j$ such that $u_i>u_j$. Then, if $P_i^+=0$, one has $a_{ij}\le0$ due 
to \eqref{eq:smuas_pij} and hence the matrix ${\mathbb B}({\rm U})$ defined by 
\eqref{asm-bij}, \eqref{asm-bii} does not depend on these $\beta_{ij}$.
\end{remark}

\begin{remark}\label{rem:lp}
The condition \eqref{eq:smuas_weights} assures that the SMUAS method is
linearity preserving. Indeed, if $u_h\in P_1(\mathbb{R}^d)$, then
$u_i-u_{ij}=u_j-u_i$ for any $i\in\{1,\dots,M\}$ and $j\in S_i$
and hence one gets
\begin{equation*}
   P_i^+=-P_i^-\le\sum_{j\in S_i}\,p_{ij}\,\vert u_i-u_j\vert
   \le \sum_{j\in S_i}\,q_{ij}\,\vert u_i-u_j\vert=Q_i^+=-Q_i^-\,,
\end{equation*}
so that $R_i^+=R_i^-=1$ for $i=1,\dots,N$ and the stabilization term vanishes.
\end{remark}

\begin{remark}
The condition $(a_{ij}>0\,\vee\,a_{ji}>0)$ in \eqref{eq:smuas_pp} and
\eqref{eq:smuas_pm} restricts the summation to those indices $j\in S_i$ for 
which $d_{ij}\neq0$, cf.~\eqref{eq:mod_muas_pp} and \eqref{eq:mod_muas_pm}. 
This is important to obtain optimal convergence rates in the
diffusion-dominated regime.
\end{remark}

Of course, the properties of the SMUAS method depend on the choice of the
weighting factors $p_{ij}$, $q_{ij}$. The relations
\eqref{eq:mod_muas_pp}--\eqref{eq:mod_muas_qm} correspond to
\begin{equation}\label{eq:eccomas_factors}
   p_{ij}=\max\{a_{ij},0,a_{ji}\}\,,\quad 
   q_{ij}=\max\{\vert a_{ij}\vert,a_{ji}\}\,,\quad 
   i=1,\dots,M\,,\,\,j\in S_i\,.
\end{equation}
Another possibility is to simply set
\begin{equation}\label{eq:unite_factors}
   p_{ij}=q_{ij}=1\,,\quad i=1,\dots,M\,,\,\,j\in S_i\,.
\end{equation}

More generally, let us consider weighting factors satisfying $p_{ij}=q_{ij}$ 
for $i=1,\dots,M$ and $j\in S_i$ but not necessarily equal to $1$. In the
convection-dominated regime, the condition $(a_{ij}>0\,\vee\,a_{ji}>0)$ usually
holds for any $j\in S_i$ due to the skew-symmetry of the convection matrix and
hence one obtains that $P_i^++Q_i^-=P_i^-+Q_i^+=0$. Thus, if
$R_i^+=R_i^-=1$ for some $i\in\{1,\dots,M\}$ (so that $\beta_{ij}=0$ for all
$j\in S_i$), one finds out that $u_i=\bar{u}_i$ where $\bar{u}_i$ is
a local average defined by
\begin{equation*}
    \bar{u}_i=\frac{\sum_{j\in S_i}p_{ij}\,(u_j+u_{ij})}
    {2\,\sum_{j\in S_i}p_{ij}}\,,
\end{equation*}
see the derivation leading to \eqref{eq:average}. Therefore, the choice of the
weights $p_{ij}$ may be also guided by the requirement that the local averages
$\bar{u}_i$ are good approximations of the values $u_i$ for smoothly varying
functions. Then the weights $p_{ij}$ should depend on the distribution of the
nodes $x_j$, $j\in S_i$, and on their distances to $x_i$.

\begin{remark}
It is not always necessary to use all the additional terms in
\eqref{eq:smuas_pp}--\eqref{eq:smuas_qm}. For example, let us consider the 
patch around the node $A$ in Fig.~\ref{fig_grid4}. Then the nodes $B$, $D$ and 
$C$, $F$ are symmetric with respect to $A$. Therefore, using 
\eqref{eq:unite_factors} and assuming that the condition
$(a_{ij}>0\,\vee\,a_{ji}>0)$ holds for any $j\in S_i$ and $x_i=A$, it is 
sufficient to introduce only symmetrically placed points to the nodes $E$ and 
$G$. However, for simplicity of the presentation (and also of implementation), 
we do not consider such variants of the above formulas in this paper.
\end{remark}

Now let us prove that the SMUAS method satisfies Assumptions (A1) and (A2) from
Section~\ref{s3}.

\begin{theorem}\label{smuas_a1}
The stabilization matrix of the SMUAS method satisfies Assumption~(A1).
\end{theorem}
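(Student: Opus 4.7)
The plan is to invoke Theorem~\ref{asm_existence}, which already shows that Assumption~(A1) follows from the single continuity condition \eqref{asm_beta2}: for all $i,j\in\{1,\dots,N\}$ with $a_{ij}>0$, the function ${\rm U}\mapsto\beta_{ij}({\rm U})\,(u_j-u_i)$ is continuous on ${\mathbb R}^N$. Thus the whole proof reduces to verifying \eqref{asm_beta2} for the specific $\beta_{ij}$ built in the SMUAS method.

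Before handling the essential case, I would dispose of the trivial ones. For $i\in\{M+1,\dots,N\}$, the convention \eqref{eq:R-Dir} yields $R_i^\pm=1$ and hence $\beta_{ij}\equiv 0$, so the product is identically zero. For any $i,j$ with $u_i=u_j$ (in particular $j=i$), \eqref{eq:betaij} also gives $\beta_{ij}=0$ and the factor $(u_j-u_i)$ vanishes. So the task is to check continuity when $i\in\{1,\dots,M\}$, $a_{ij}>0$, and one approaches a point ${\rm U}^*\in{\mathbb R}^N$; the potentially dangerous locus is $\{u_i=u_j\}$, where the piecewise definition \eqref{eq:betaij} switches.

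Next I would establish the basic continuity of the auxiliary quantities. Each $u_{ij}({\rm U})$ defined by \eqref{eq:uij} is a linear function of ${\rm U}$ (the gradient $\nabla u_h|_{T_{ij}}$ is a linear combination of the nodal values of $u_h$ on $T_{ij}$), so the positive and negative parts appearing in \eqref{eq:smuas_pp}–\eqref{eq:smuas_qm} are continuous and thus $P_i^\pm({\rm U})$, $Q_i^\pm({\rm U})$ depend continuously on ${\rm U}$. Directly from the definitions, $\pm P_i^\pm\ge 0$ and $\pm Q_i^\pm\ge 0$, and consequently $R_i^+$ is continuous at any ${\rm U}$ where $P_i^+({\rm U})>0$, with the analogous statement for $R_i^-$.

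The heart of the proof is then to treat the three cases for a fixed $(i,j)$ with $a_{ij}>0$. First, if $u_i^*>u_j^*$, the hypothesis \eqref{eq:smuas_pij} gives $p_{ij}>0$ and $a_{ij}>0$ ensures the index $j$ is retained in the summation for $P_i^+$, whence $P_i^+({\rm U}^*)\ge p_{ij}(u_i^*-u_j^*)>0$; by continuity this persists in a neighbourhood, so $R_i^+$ and therefore $\beta_{ij}=1-R_i^+$ are continuous there. Second, if $u_i^*<u_j^*$, the symmetric argument gives $P_i^-({\rm U}^*)<0$ and hence $\beta_{ij}=1-R_i^-$ is continuous near ${\rm U}^*$. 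The main obstacle is the third case, ${\rm U}^*$ with $u_i^*=u_j^*$, where $\beta_{ij}$ itself may well jump across the hyperplane $\{u_i=u_j\}$; this is handled by the uniform bound $\beta_{ij}\in[0,1]$, which gives $|\beta_{ij}({\rm U})(u_j-u_i)|\le|u_j-u_i|\to 0$, matching the value $\beta_{ij}({\rm U}^*)(u_j^*-u_i^*)=0$. Combining the three cases yields \eqref{asm_beta2}, and Theorem~\ref{asm_existence} then delivers Assumption~(A1).
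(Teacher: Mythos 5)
Your proposal is correct and follows essentially the same route as the paper's proof: reduce to verifying \eqref{asm_beta2} via Theorem~\ref{asm_existence}, use the bound $\beta_{ij}\in[0,1]$ to handle the locus $u_i=u_j$, and in the cases $u_i\gtrless u_j$ use \eqref{eq:smuas_pij} to get $P_i^\pm\neq0$ on a neighbourhood so that $R_i^\pm$, and hence $\beta_{ij}$, is continuous there. No substantive differences.
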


\begin{proof}
In view of Theorem~\ref{asm_existence}, it suffices to prove \eqref{asm_beta2}.
Since $\beta_{ij}\equiv0$ for any $j\in\{1,\dots,N\}$ if $i\in\{M+1,\dots,N\}$,
consider any $i\in\{1,\dots,M\}$. Let $j\in S_i$ be such that $a_{ij}>0$. We 
want to show that $\Phi({\rm U}):=\beta_{ij}({\rm U})(u_j-u_i)$ is continuous 
at a fixed but arbitrary point 
$\bar{\rm U}=(\bar{u}_1,\dots,\bar{u}_N)\in\RR^N$. If $\bar{u}_i=\bar{u}_j$, 
then $\Phi(\bar{\rm U})=0$ and the continuity at $\bar{\rm U}$ follows from the 
estimates
\begin{equation*}
   \vert \Phi({\rm U})-\Phi(\bar{\rm U})\vert =\vert \Phi({\rm U})\vert 
   \le\vert u_i-u_j\vert\le\sqrt2\,\| {\rm U}-\bar{\rm U}\| \,,
\end{equation*}
where $\| \cdot\| $ is the Euclidean norm on ${\mathbb R}^N$. Thus, let
$\bar{u}_i>\bar{u}_j$ and denote 
\begin{equation*}
   B=\{{\rm U}\in {\mathbb R}^N;\,
   \| {\rm U}-\bar{\rm U}\| \le\frac12\vert \bar{u}_i-\bar{u}_j\vert\}\,.
\end{equation*}
Then $u_i>u_j$ for ${\rm U}\in B$ and hence
\begin{equation*}
   \Phi({\rm U})=(1-R^+_i({\rm U}))\,(u_j-u_i)\qquad\forall\,\,{\rm U}\in B\,.
\end{equation*}
Since both $P_i^+$ and $Q_i^+$ are continuous and $P_i^+$ is positive in
$B$ due to \eqref{eq:smuas_pij}, the function $\Phi$ is continuous in $B$ and 
hence also at $\bar{\rm U}$. If $\bar{u}_i<\bar{u}_j$, one proceeds analogously.
\end{proof}

\begin{theorem}\label{A2_for_ASM}
The stabilization matrix of the SMUAS method satisfies Assumption~(A2).
\end{theorem}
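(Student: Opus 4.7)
The plan is to unpack the definition of $b_{ij}(\mathrm{U})$ in \eqref{asm-bij} and show that, under a strict local extremum at $x_i$, the limiter forces $\beta_{ij}(\mathrm{U})=1$ whenever $a_{ij}>0$, which is then enough to absorb $a_{ij}$ into $b_{ij}(\mathrm{U})$. I focus on the case where $u_i$ is a strict local maximum; the minimum case is symmetric. For any $j\in S_i$ with $a_{ij}\le 0$, the inequality $a_{ij}+b_{ij}(\mathrm{U})\le 0$ is immediate from \eqref{eq_b2}, so I fix $j\in S_i$ with $a_{ij}>0$ and aim to prove $\beta_{ij}(\mathrm{U})=1$. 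Once this is established, \eqref{asm-bij} gives $b_{ij}(\mathrm{U})=-\max\{a_{ij},0,\beta_{ji}a_{ji}\}\le -a_{ij}$, hence $a_{ij}+b_{ij}(\mathrm{U})\le 0$.

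By \eqref{eq:betaij}, since $u_i>u_j$, $\beta_{ij}(\mathrm{U})=1-R_i^+$, so everything reduces to verifying $R_i^+=0$ for $R_i^+=\min\{1,Q_i^+/P_i^+\}$. In \eqref{eq:smuas_qp}, the standard terms $(u_k-u_i)^+$ vanish for every $k\in S_i$ because $u_i>u_k$. The key geometric step — which I expect to be the main obstacle — is showing that the extrapolated terms $(u_{ik}-u_i)^+$ also vanish. For this, I would exploit that $T_{ik}\subset\Delta_i$ is a simplex with $x_i$ as one vertex and remaining vertices in $S_i$. Since $u_i$ strictly exceeds the value at every such vertex, the affine restriction $u_h|_{T_{ik}}$ attains its maximum over $T_{ik}$ at $x_i$, so $\nabla u_h|_{T_{ik}}\cdot v\le 0$ for every direction $v$ pointing from $x_i$ into $T_{ik}$. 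By the very definition of $T_{ik}$, the half-line $\{x_i+\alpha(x_i-x_k):\alpha>0\}$ enters $T_{ik}$ for small $\alpha>0$, so $v=x_i-x_k$ is such a direction, and \eqref{eq:uij} yields $u_{ik}\le u_i$. Hence $(u_{ik}-u_i)^+=0$ for every $k\in S_i$, giving $Q_i^+=0$.

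To conclude $R_i^+=0$ I still need $P_i^+>0$. This follows from \eqref{eq:smuas_pp} and \eqref{eq:smuas_pij}: the chosen index $j$ satisfies $a_{ij}>0$, so it is included in the summation defining $P_i^+$, and the corresponding term contributes $p_{ij}(u_i-u_j)^+$ with $p_{ij}>0$ and $(u_i-u_j)^+=u_i-u_j>0$. Therefore $R_i^+=\min\{1,0/P_i^+\}=0$, and the argument of the first paragraph closes the maximum case. The strict-local-minimum case is handled in exactly the same way with signs reversed: $(u_k-u_i)^-=0$ since $u_k>u_i$, and $u_h|_{T_{ik}}$ now has its minimum at $x_i$, so $\nabla u_h|_{T_{ik}}\cdot(x_i-x_k)\ge 0$ and hence $u_{ik}\ge u_i$, giving $Q_i^-=0$; the index $j$ with $a_{ij}>0$ again makes $P_i^-<0$ via \eqref{eq:smuas_pij}, so $R_i^-=0$ and $\beta_{ij}=1$.

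The only non-routine ingredient is the geometric assertion $u_{ik}\le u_i$ (respectively $u_{ik}\ge u_i$): the extension in \eqref{eq:uij} evaluates $u_h|_{T_{ik}}$ at a virtual point outside $\Delta_i$, and one must verify that this extrapolation respects the local monotonicity at $x_i$. The remaining steps — dispatching the $a_{ij}\le 0$ case via \eqref{eq_b2}, reading off $P_i^+>0$ from \eqref{eq:smuas_pij}, and translating $R_i^+=0$ into $\beta_{ij}=1$ through \eqref{eq:betaij} — are bookkeeping.
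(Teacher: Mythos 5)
Your proof is correct and follows essentially the same route as the paper's: dispatch $a_{ij}\le 0$ via \eqref{eq_b2}, then for $a_{ij}>0$ show $Q_i^+=0$ (resp.\ $Q_i^-=0$) by observing that the directional derivatives of $u_h$ on the simplices of $\Delta_i$ pointing away from $x_i$ have the right sign so that $u_{ik}\le u_i$ (resp.\ $u_{ik}\ge u_i$), while $P_i^+\ge p_{ij}(u_i-u_j)^+>0$ by \eqref{eq:smuas_pij} forces $\beta_{ij}=1-R_i^+=1$. The only difference is cosmetic: you spell out the geometric extrapolation step in slightly more detail than the paper does.
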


\begin{proof}
Consider any ${\rm U}=(u_1,\dots,u_N)\in\RR^N$, $i\in\{1,\dots,M\}$, and
$j\in S_i$. Let $u_i$ be a strict local extremum of $\rm U$ with respect to
$S_i$. We want to prove that
\begin{equation}\label{aa}
   a_{ij}+b_{ij}({\rm U})\le0\,.
\end{equation}
If $a_{ij}\le0$, then \eqref{aa} holds since $b_{ij}({\rm U})\le0$. Thus, let
$a_{ij}>0$. First, assume that $u_i>u_k$ for any $k\in S_i$. Then, for any
simplex $T\subset\Delta_i$ and any vector $\ba\in\RR^d$ pointing from $x_i$
into $T$, one has $\ba\cdot\nabla u_h\vert_T^{}<0$ with $u_h$ defined by 
\eqref{eq_vect_fcn_identification}. Thus, $u_i>u_{ik}$ for any $k\in S_i$
according to \eqref{eq:uij}, which implies that $Q_i^+=0$. Moreover,
$P_i^+\ge p_{ij}\,(u_i-u_j)^+>0$ in view of \eqref{eq:smuas_pij} and hence 
$\beta_{ij}=1-R_i^+=1$. Similarly, if $u_i<u_k$ for any $k\in S_i$, then
also $u_i<u_{ik}$ for any $k\in S_i$ and hence $Q_i^-=0$. Since 
$P_i^-\le p_{ij}\,(u_i-u_j)^-<0$, one obtains $\beta_{ij}=1-R_i^-=1$.
Therefore, in both cases, $b_{ij}({\rm U})\le -a_{ij}$, which proves \eqref{aa}.
\end{proof}

The above theorems imply that the SMUAS method is solvable
(cf.~Theorem~\ref{existence}) and satisfies the DMP formulated in
Theorem~\ref{thm:general_DMP2}. Moreover, as shown in Remark~\ref{rem:lp}, the 
SMUAS method is linearity preserving. It is important that all these properties
hold for arbitrary simplicial meshes. For regular families of triangulations,
one also has the error estimate \eqref{eq:error_est}.

However, as we have seen in Section~\ref{s5}, such theoretical properties do 
not guarantee that an algebraically stabilized method will provide an accurate
approximate solution and that the approximate solutions will converge to the 
exact solution in usual norms. Thus, let us investigate the properties of the
SMUAS method numerically. We start with Example~\ref{ex:smooth} for
$\varepsilon=10^{-8}$ and Grid~4, for which suboptimal convergence results were
presented in Table~\ref{tab2} for the AFC scheme with the Kuzmin limiter (that 
is equivalent to the MUAS method from Section~\ref{s43} in this case). The
results for the SMUAS method with $p_{ij}$, $q_{ij}$ defined by
\eqref{eq:eccomas_factors} are shown in Table~\ref{tab4}. 
\begin{table}[t]
\begin{center}
\begin{minipage}{274pt}
\caption{Example~\ref{ex:smooth}: $\varepsilon=10^{-8}$, numerical results for
Grid 4 computed using the SMUAS method with $p_{ij}$, $q_{ij}$ defined by \eqref{eq:eccomas_factors}}
\label{tab4}
\begin{tabular}{@{}rcccccc@{}}
\toprule
$ne$ & $\|u-u_h\|_{0,\Omega}^{}$ & order &   $\vert u-u_h\vert_{1,\Omega}^{}$ &
order & $\|u-u_h\|_h^{}$ & order\\
\midrule
  16 &  2.147e$-$2 &  1.61 & 4.734e$-$1 &  0.98 & 5.530e$-$2 &  1.92\\
  32 &  6.353e$-$3 &  1.76 & 2.529e$-$1 &  0.90 & 1.479e$-$2 &  1.90\\
  64 &  1.783e$-$3 &  1.83 & 1.363e$-$1 &  0.89 & 3.922e$-$3 &  1.92\\
 128 &  4.706e$-$4 &  1.92 & 7.220e$-$2 &  0.92 & 1.054e$-$3 &  1.90\\
 256 &  1.221e$-$4 &  1.95 & 3.807e$-$2 &  0.92 & 2.940e$-$4 &  1.84\\
 512 &  3.135e$-$5 &  1.96 & 2.002e$-$2 &  0.93 & 7.896e$-$5 &  1.90\\
\botrule
\end{tabular}
\end{minipage}
\end{center}
\end{table}
One observes a higher accuracy of the results than in Table~\ref{tab2} and the
experimental convergence rates tend to the optimal values. Using the SMUAS
method with $p_{ij}$, $q_{ij}$ defined by \eqref{eq:unite_factors} leads to
similar results, see Table~\ref{tab5}. 
\begin{table}[t]
\begin{center}
\begin{minipage}{274pt}
\caption{Example~\ref{ex:smooth}: $\varepsilon=10^{-8}$, numerical results for
Grid 4 computed using the SMUAS method with $p_{ij}$, $q_{ij}$ defined by \eqref{eq:unite_factors}}
\label{tab5}
\begin{tabular}{@{}rcccccc@{}}
\toprule
$ne$ & $\|u-u_h\|_{0,\Omega}^{}$ & order &   $\vert u-u_h\vert_{1,\Omega}^{}$ &
order & $\|u-u_h\|_h^{}$ & order\\
\midrule
  16 &  2.208e$-$2 &  1.60 & 4.748e$-$1 &  0.99 & 5.702e$-$2 &  1.91\\
  32 &  6.605e$-$3 &  1.74 & 2.515e$-$1 &  0.92 & 1.530e$-$2 &  1.90\\
  64 &  1.860e$-$3 &  1.83 & 1.336e$-$1 &  0.91 & 4.008e$-$3 &  1.93\\
 128 &  4.924e$-$4 &  1.92 & 6.959e$-$2 &  0.94 & 1.046e$-$3 &  1.94\\
 256 &  1.279e$-$4 &  1.95 & 3.635e$-$2 &  0.94 & 2.823e$-$4 &  1.89\\
 512 &  3.291e$-$5 &  1.96 & 1.917e$-$2 &  0.92 & 7.358e$-$5 &  1.94\\
\botrule
\end{tabular}
\end{minipage}
\end{center}
\end{table}
Also for other test examples, the
results obtained using \eqref{eq:eccomas_factors} and \eqref{eq:unite_factors}
were similar and hence we will not present any other comparisons of results for
these two choices of the weighting factors here. Similar convergence rates as
in Tables~\ref{tab4} and \ref{tab5} can be observed for all other grids from 
Figs.~\ref{fig:grids1} and \ref{fig:grids2}. The higher accuracy of the SMUAS 
method can be also seen from Fig.~\ref{fig:ex1_smuas} if one compares it with 
Fig.~\ref{fig:ex1}.

\begin{figure}[b]
\centerline{
\includegraphics[width=0.55\textwidth]{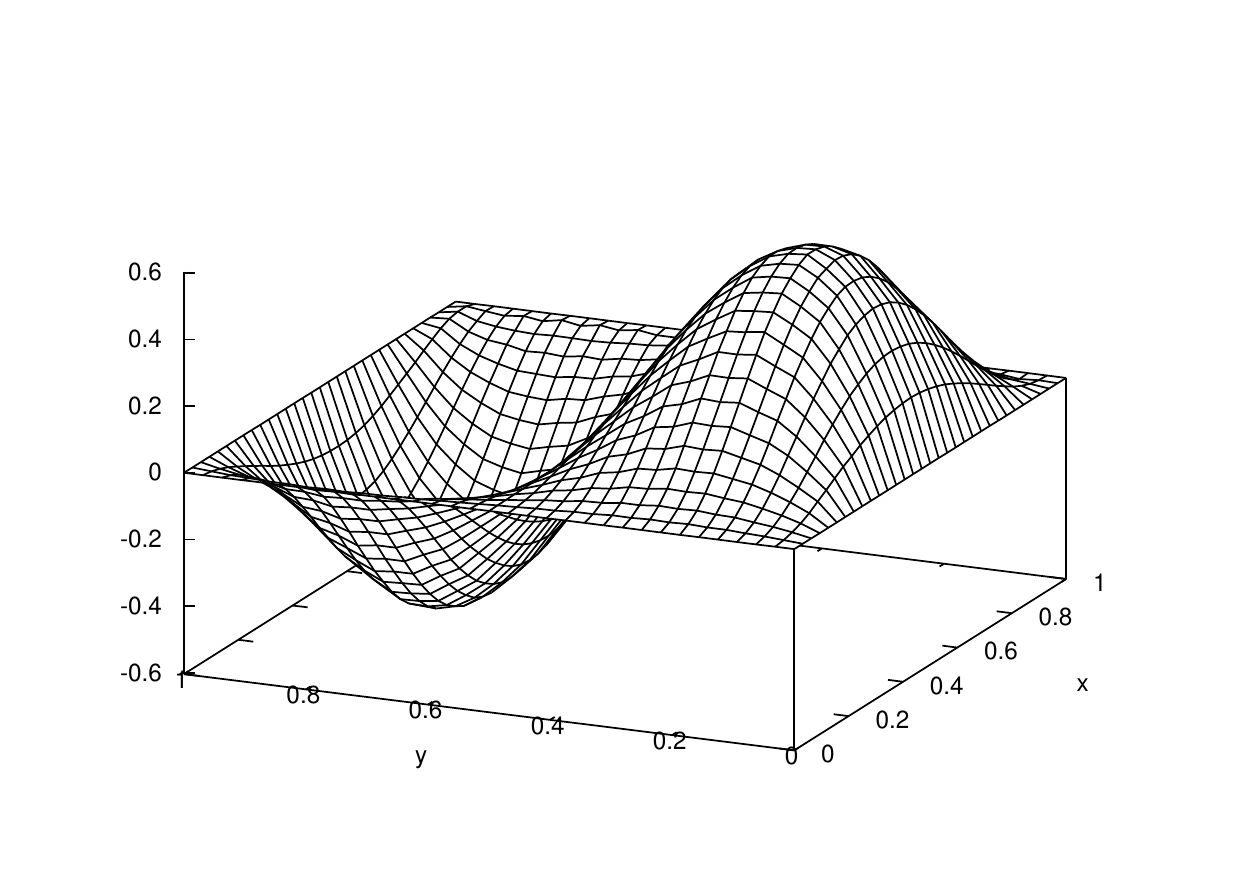}\hspace*{-2ex}
\includegraphics[width=0.55\textwidth]{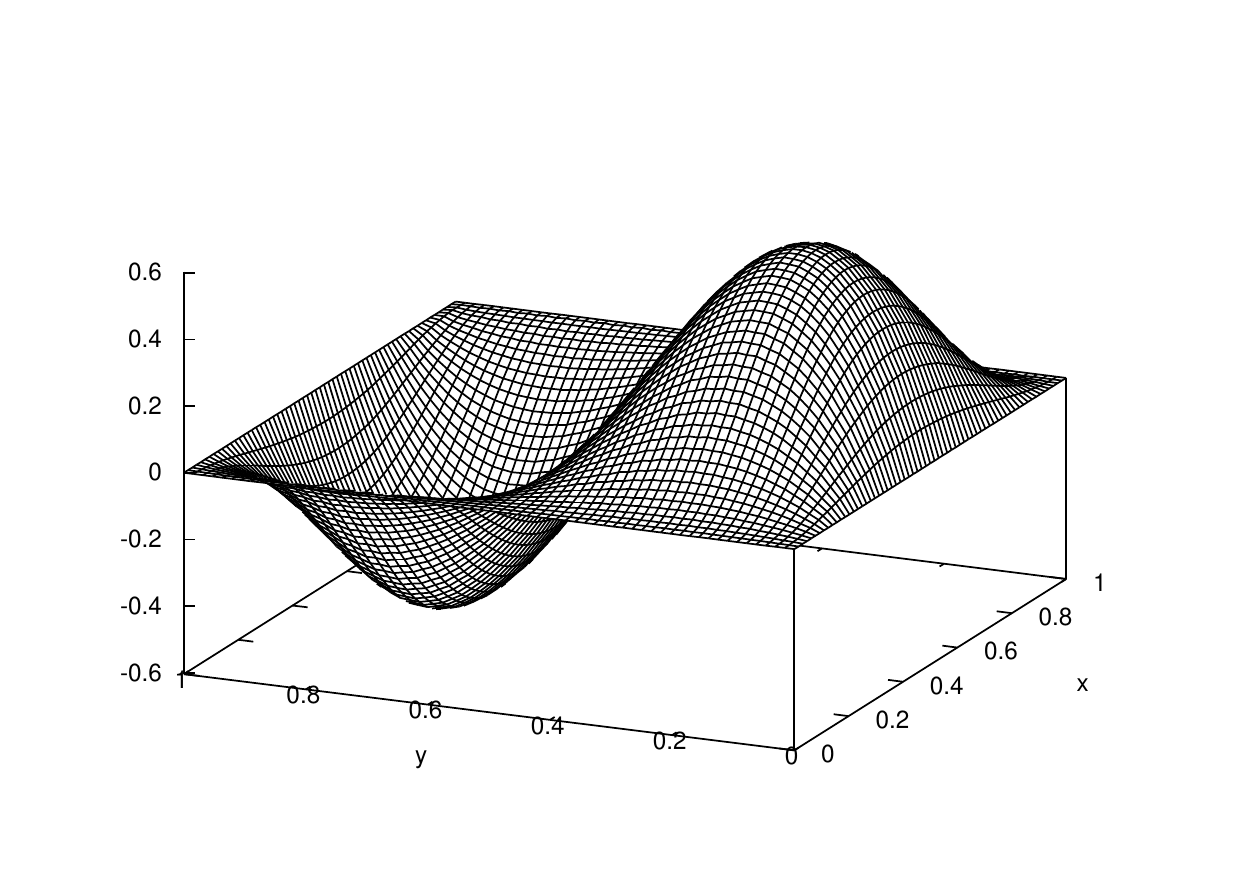}
}
\caption{Example~\ref{ex:smooth}: $\varepsilon=10^{-8}$, approximate solutions 
computed using the SMUAS method with $p_{ij}$, $q_{ij}$ defined by 
\eqref{eq:unite_factors} on Grid~4 with $ne=32$ (left) and $ne=64$ (right)}
\label{fig:ex1_smuas}
\end{figure}

It was reported in \cite{BJK16} for Example~\ref{ex:smooth} that, in the 
diffusion-dominated case $\varepsilon=10$, the solutions of the AFC scheme with 
the Kuzmin limiter do not converge for the non-Delaunay Grid~5 in any of the
three norms considered in the above tables. Recall that Grid~5 was obtained
from Grid~4 by shifting some of the nodes by $h/10$ to the right, where $h$ is 
the horizontal mesh width in Grid~4. If the shift is $h/2$, then the
experimental convergence rates tend to zero already on relatively coarse
meshes, cf.~\cite{JK21}. The MUAS method from Section~\ref{s43} shows an
improved behaviour. In particular, for the shift $h/2$, it leads to a
convergence in all three norms and the convergence rates in the $L^2$ norm and
the $H^1$ seminorm are 
near to the optimal values. However, if the shift is $0.8h$, then the accuracy 
deteriorates and the convergence rates tend to zero also for the MUAS method, 
cf.~\cite{JK21}. It is conjectured in \cite{JK21} that this behaviour is 
connected with the fact that the MUAS method is linearity preserving for the 
shift $h/2$ but not for $0.8h$. This conjecture is supported by the results 
obtained for the SMUAS method which is always linearity preserving and, indeed, 
leads to optimal convergence rates even on the highly distorted mesh 
corresponding to the shift $0.8h$, see Table~\ref{tab7}.


\begin{table}[t]
\begin{center}
\begin{minipage}{274pt}
\caption{Example~\ref{ex:smooth}: $\varepsilon=10$, numerical results computed 
using the SMUAS method with $p_{ij}$, $q_{ij}$ defined by \eqref{eq:eccomas_factors}
on triangulations of the type of Grid~5 obtained by shifting the
respective interior nodes by eight tenths of the horizontal mesh width}
\label{tab7}
\begin{tabular}{@{}rcccccc@{}}
\toprule
$ne$ & $\|u-u_h\|_{0,\Omega}^{}$ & order &   $\vert u-u_h\vert_{1,\Omega}^{}$ &
order & $\|u-u_h\|_h^{}$ & order\\
\midrule
  16 &  3.155e$-$2 &  1.65 & 5.855e$-$1 &  0.83 & 1.976e$+$0 &  0.90\\
  32 &  7.267e$-$3 &  2.12 & 3.002e$-$1 &  0.96 & 9.676e$-$1 &  1.03\\
  64 &  1.665e$-$3 &  2.13 & 1.518e$-$1 &  0.98 & 4.826e$-$1 &  1.00\\
 128 &  4.111e$-$4 &  2.02 & 7.642e$-$2 &  0.99 & 2.420e$-$1 &  1.00\\
 256 &  1.048e$-$4 &  1.97 & 3.837e$-$2 &  0.99 & 1.214e$-$1 &  1.00\\
 512 &  2.659e$-$5 &  1.98 & 1.922e$-$2 &  1.00 & 6.080e$-$2 &  1.00\\
\botrule
\end{tabular}
\end{minipage}
\end{center}
\end{table}

It is not surprising that the SMUAS method provides the exact solution on any
mesh if it is applied to Example~\ref{ex:linear}. For
Example~\ref{ex:exp_layer} and Grid~4, the solution of the SMUAS method is
nodally exact except for the rightmost vertical interior grid line, similarly 
as for the AFC scheme with the Kuzmin limiter and Grid~1 in Fig.~\ref{fig:ex34}
(left). Also for Example~\ref{ex:exp_par_layers}, the SMUAS method on Grid~4
provides an approximate solution which is nodally exact in most of the 
computational domain, see Fig.~\ref{fig:ex45_smuas} (left). The approximation 
\begin{figure}[b]
\centerline{
\includegraphics[width=0.55\textwidth]{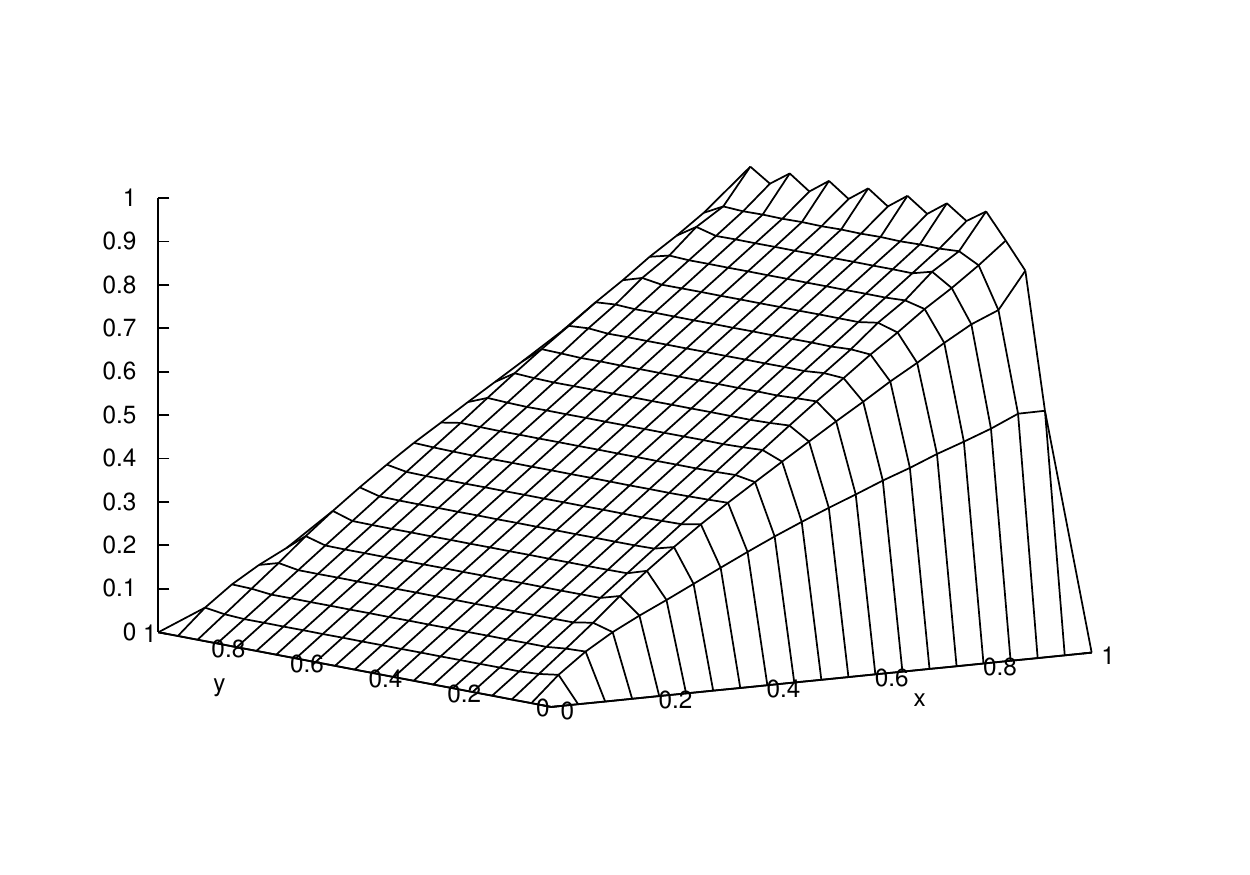}\hspace*{-2ex}
\includegraphics[width=0.55\textwidth]{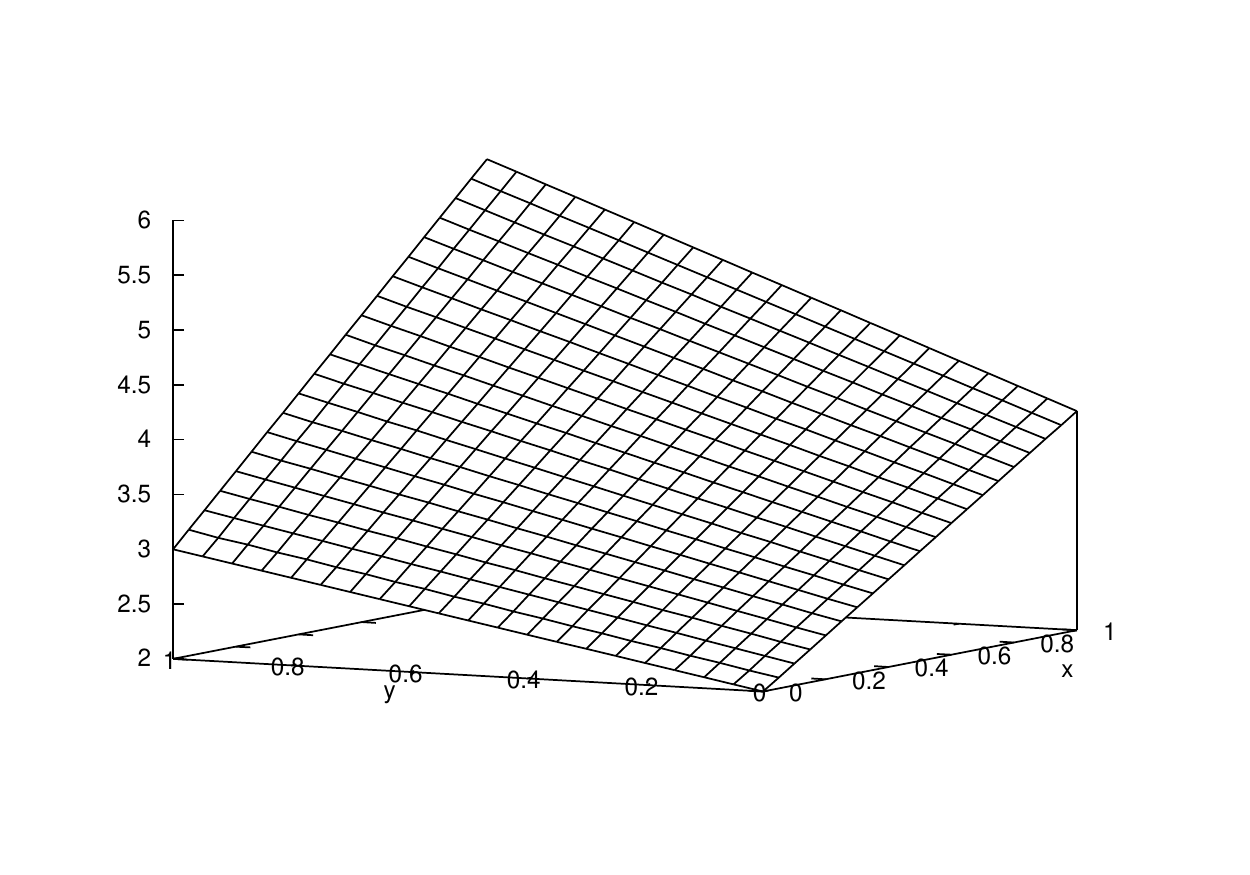}
}
\caption{Approximate solutions computed using the SMUAS method with $p_{ij}$,
$q_{ij}$ defined by \eqref{eq:eccomas_factors} on Grid~4 with $ne=20$ for 
Example~\ref{ex:exp_par_layers} (left) and Example~\ref{ex:bilinear} (right)}
\label{fig:ex45_smuas}
\end{figure}
of the boundary layers should be improved by local mesh refinement. Finally,
also in case of Example~\ref{ex:bilinear}, an application of the SMUAS method
on Grid~4 leads to a much more accurate approximate solution than the AFC
scheme with the Kuzmin limiter, see Figs.~\ref{fig:ex45_smuas} (right) and
\ref{fig:ex5} (right). Moreover, the SMUAS method again shows optimal 
convergence rates.

Summarizing our numerical results, one can state that the SMUAS method led to
optimal convergence rates in all our numerical tests involving various types of 
meshes whereas, in many cases, the algebraic stabilizations from 
Section~\ref{s4} lead to suboptimal convergence rates or do not converge at 
all. A theoretical explanation of the observed optimal convergence behaviour of 
the SMUAS method is left to future work.


\end{document}